\documentclass[12pt]{amsart}

\usepackage{upref,amssymb,bm,mathrsfs}
\usepackage{enumerate}

\usepackage[centering,width=6in,truedimen]{geometry}
\usepackage{mathtools}
\usepackage{cite}
\usepackage{verbatim}

\usepackage[colorlinks,pagebackref,pdfstartview=FitH]{hyperref}

\theoremstyle{plain}
\newtheorem{thm}{Theorem}[section]
\newtheorem{prop}[thm]{Proposition}
\newtheorem{lem}[thm]{Lemma}
\newtheorem{cor}[thm]{Corollary}

\newtheorem{ques}[thm]{Question}
\newtheorem*{ques*}{Question}

\theoremstyle{definition}

\newtheorem{exmp}[thm]{Example}

\theoremstyle{remark}
\newtheorem{rem}[thm]{Remark}

\DeclareMathOperator{\dist}{\mbox{dist}}

\newcommand{\N}{\mathbb N}

\newcommand{\R}{\mathbb R}
\newcommand{\RR}{\mathbb R}
\newcommand{\C}{\mathbb C}
\newcommand{\Z}{\mathbb Z}
\newcommand{\supp}{\mbox{supp}}

\def\a{\mathbf{a}}
\newcommand{\A}{\mathcal{A}}
\numberwithin{equation}{section}
\newcommand{\bx}{\mathbf x}
\def \bxi {{\boldsymbol{\xi}}}

\renewcommand{\epsilon}{\varepsilon}

\title{Self-similar and self-conformal measures with slow Fourier decay}

\author{Simon Baker}

\address{Simon Baker\\
Department of Mathematical Sciences\\
Loughborough University\\
Loughborough\\
LE11 3TU, UK
}
\email{simonbaker412@gmail.com}

\author{Amlan Banaji}

\address{
Amlan Banaji\\
Department of Mathematics and Statistics\\
University of Jyväskylä\\
Jyväskylä, FI-40014\\
Finland
}
\email{banajimath@gmail.com}

\subjclass{42A38 (Primary), 28A80, 11J71 (Secondary)}

\keywords{Fourier decay, Rajchman measures, self-similar measures, self-conformal measures}

\begin{document}

\begin{abstract}
	Given any function $\phi \colon [0,\infty)\to (0,1]$ satisfying $\lim_{\xi\to\infty}\phi(\xi) = 0$, we prove the existence of i) self-similar measures and ii) nonlinear $C^{\infty}$ self-conformal measures which are Rajchman and whose Fourier transform $\widehat{\mu}$ satisfies 
    \[ \limsup_{\xi\to\infty}\frac{|\widehat{\mu}(\xi)|}{\phi(\xi)}>0.\] 
    Moreover, we derive new sufficient conditions for a self-conformal measure to be Rajchman, and construct an explicit self-similar measure $\mu$ such that $\mu$ almost every $x$ is normal in base $10$ but the sequence $(10^{n}x \mod 1)_{n=1}^{\infty}$ equidistributes extremely slowly.
\end{abstract}

\maketitle

\section{Introduction}

\subsection{Overview}
Given a compactly supported Borel probability measure $\mu$ on $\R$, its Fourier transform $\widehat{\mu} \colon \R \to \C$ is defined by $\widehat{\mu}(\xi) = \int_{\R} e^{2 \pi i \xi x} d\mu(x)$. A measure $\mu$ is said to be Rajchman if $\lim_{|\xi|\to\infty}\widehat{\mu}(\xi)=0$. If $\mu$ is Rajchman, it is natural to ask whether $\widehat{\mu}$ converges to zero at a given rate. This is an important problem with applications to many different areas of mathematics. In combinatorics it is known that if a closed set has sufficiently large Hausdorff dimension and supports a measure whose Fourier transform decays to zero sufficiently quickly, then this implies the existence of patterns within the set (e.g. three term arithmetic progressions)~\cite{FGP,LP}. 
In number theory, it is known that suitable upper bounds for the Fourier transform of a measure $\mu$ can be used to establish properties of $\mu$ typical points related to Diophantine approximation and normality~\cite{PVZZ}. 
In particular, a well known result due to Davenport, Erd\H{o}s and LeVeque~\cite{DEL} implies that if $\widehat{\mu}(\xi) = O((\log \log |\xi|)^{-1-\varepsilon})$ for some $\varepsilon>0$, then $\mu$ almost every $x$ is normal in all bases. 
In quantum chaos, knowledge about the Fourier transform of a measure can be used to establish fractal uncertainty principles for the support of the measure~\cite{Dyatlov}. 
This is significant, as fractal uncertainty principles can be used to address a variety of problems, such as bounding the $L^2$ mass of eigenfunctions of the Laplacian, and proving observability theorems for PDEs~\cite{DJ,DJN}. 

In this paper, we are interested in finding self-similar and self-conformal measures (defined below) which are Rajchman, but have very slow Fourier decay. This problem has previously been studied in the context of general absolutely continuous measures. The Riemann--Lebesgue lemma states that every absolutely continuous measure is Rajchman. However, it is known that absolutely continuous measures can have arbitrarily slow Fourier decay along a sequence of frequencies, see for instance \cite[Theorem~3]{BT16}. 

For us, an iterated function system (IFS) will be a finite set of uniform contractions $\{S_a \colon [0,1] \to [0,1]\}_{a \in \mathcal{A}}$. 
Given an IFS and a probability vector $(p_{a})_{a\in \mathcal{A}}$, there is a unique Borel probability measure $\mu$ satisfying $\mu = \sum_{a \in \mathcal{A}} p_a S_a \mu$, where $S_a \mu$ is the pushforward of $\mu$ by $S_a$ ($S_{a}\mu(A) = \mu(S_{a}^{-1}(A))$ for $A\subseteq \R$ a Borel set)~\cite{Hutchinson}. 
If all of the $S_a$ are $C^1$ contractions with non-vanishing derivative, then we call $\mu$ a self-conformal measure, and if all of the $S_a$ are similarities then $\mu$ is called a self-similar measure. A self-similar measure is called homogeneous if all contraction ratios are equal. 
We emphasise that throughout this paper we make the standing assumption that all entries in our probability vectors are strictly positive and all self-similar and self-conformal measures are non-atomic. This latter property is equivalent to the contractions in the IFS not sharing a common fixed point. In recent years, the Fourier transform of self-similar and nonlinear self-conformal measures has been a topic of intensive study \cite{ACWW25,AHW,AHWplane,BB25,BKS,BS,BY25,BourgainDyatlovFourier,Bre,JordanSahlsten,LPS,LiSahlsten,SahlstenStevens,SahlstenSurvey,Str}. The main results of this paper, namely Theorems~\ref{t:slowselfsim} and~\ref{t:slowconformal}, show that in these two contexts it is possible for the measures to be Rajchman but have arbitrarily slow Fourier decay. Before giving formal statements, we survey some relevant results. 

\subsection{Self-similar measures}
We begin by remarking that there are self-similar measures that are not Rajchman. For example, every self-similar measure supported on the middle-third Cantor set is non-Rajchman. 
In~\cite{Bre}, Br\'{e}mont classified when a self-similar measure is Rajchman. Results of Shmerkin and Solomyak \cite{ShmSol}, and Solomyak \cite{Sol} demonstrate that for many natural parameterised families of self-similar measures, a typical element will have polynomial Fourier decay, i.e. there exist $C,\eta>0$ such that $|\widehat{\mu}(\xi)|\leq C|\xi|^{-\eta}$ for all $\xi\neq 0$. Despite these results, there are relatively few explicit examples of self-similar measures with polynomial Fourier decay \cite{DFW,Str}. Much more is known when we weaken the decay rate. Li and Sahlsten \cite{LiSahlsten} have shown that under a mild Diophantine assumption on the contraction ratios appearing in the IFS, the Fourier transform of a self-similar measure decays at least at a logarithmic rate, i.e. there exist $C,\eta>0$ such that $|\widehat{\mu}(\xi)|\leq C(\log |\xi|)^{-\eta}$ for all $\xi\neq 0$. Varj\'u and Yu~\cite[Corollary~1.6]{VarjuYu} have given further conditions for a self-similar measure to have at least logarithmic Fourier decay.

Very little is known about Rajchman self-similar measures whose Fourier transform decays to zero slowly. To the best of the authors' knowledge, the first result in this direction was due to Kahane~\cite{Kah}. He showed that Bernoulli convolutions parameterised by reciprocals of Salem numbers do not have polynomial Fourier decay. We refer the reader to the survey of Peres, Schlag, and Solomyak~\cite{PeScSo} for a proof of this result. We emphasise that this implies the existence of Rajchman self-similar measures that do not have polynomial Fourier decay. 
In a recent paper~\cite[Appendix~B]{MaSol}, Marshall-Maldonado and Solomyak established an upper bound for the Fourier transform of Bernoulli convolutions parameterised by reciprocals of Salem numbers; this upper bound decays to zero very slowly, in particular slower than any $k$-fold composition of logarithms. They also improved upon Kahane's result and showed that if a Bernoulli convolution is parameterised by the reciprocal of a Salem number $\alpha$ and there exist $\gamma,\beta>0$ such that the Fourier transform satisfies 
\[ \widehat{\mu}(\xi) = O\left((\log_{\alpha}|\xi|)^{-\gamma(\log_{\alpha} \log_{\alpha} \xi)^{\beta}} \right)\] 
then $\beta\leq 1$. 
They also considered biased Bernoulli convolutions, and showed that if such measures have logarithmic Fourier decay then the exponent can be bounded from above by a constant depending upon the Salem number and underlying probability vector.
In work in progress, Paukkonen, Sahlsten and Streck~\cite{PSS} have shown that Li and Sahlsten's result is sharp in the sense that there exist self-similar measures satisfying the Diophantine assumption from~\cite{LiSahlsten}, and therefore possessing logarithmic Fourier decay, whose Fourier transform decays at precisely a logarithmic rate along a subsequence of frequencies. 
With the above results in mind, it is natural to ask whether the Diophantine assumption in~\cite{LiSahlsten} is really needed to guarantee logarithmic Fourier decay. 
\begin{ques}\label{q:selfsim}
    Is it the case that for every Rajchman self-similar measure $\mu$ on $\R$, there exist $\eta,C > 0$ such that 
    \[ |\widehat{\mu}(\xi)| \leq C (\log |\xi|)^{-\eta} \quad \mbox{ for all } |\xi| > 1? \] 
\end{ques}
Our first result shows that the answer to Question~\ref{q:selfsim} is negative in a strong sense. 
\begin{thm}\label{t:slowselfsim}
For every function $\phi \colon [0,\infty) \to (0,1]$ such that $\phi(\xi) \to 0$ as $\xi \to \infty$, there exists a Rajchman homogeneous self-similar measure $\mu$ on $\mathbb{R}$ such that
\begin{equation}\label{e:selfsimslowdecay} 
\limsup_{\xi \to \infty} \frac{|\widehat{\mu}(\xi)|}{\phi(\xi)} >0. 
\end{equation}
\end{thm}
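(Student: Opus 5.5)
Our plan is to take a homogeneous self-similar measure with integer-like structure perturbed by a Liouville-type translation, so that the Fourier transform is an infinite product we can control factor by factor. Concretely, we consider the IFS $\{x\mapsto x/b,\ x\mapsto x/b + t\}$ on a suitable interval (rescaled into $[0,1]$), with integer $b\ge 3$, equal weights, and a translation parameter $t\in(0,1)$ to be chosen. The resulting measure is the law of $\sum_{n\ge1} \epsilon_n t b^{-(n-1)}$, up to scaling, with $\epsilon_n$ i.i.d.\ uniform on $\{0,1\}$. Its Fourier transform is
\[ \widehat{\mu}(\xi)=\prod_{n=0}^{\infty}\frac{1+e^{2\pi i \xi t b^{-n}}}{2},\qquad |\widehat{\mu}(\xi)|=\prod_{n=0}^{\infty}|\cos(\pi \xi t b^{-n})|. \]
If $t$ were rational, $\mu$ would be non-Rajchman (take $\xi = q b^k$). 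If $t$ is irrational, then for $\xi=b^k\eta$ the large-$n$ factors are governed by the distance of $t b^{k-n}\eta$ to the integers. We will therefore choose $t$ irrational but extremely well approximated by rationals with denominators that are powers of $b$ times fixed small integers. In other words, $t$ has a base-$b$ expansion consisting of long blocks of zeros separated by isolated nonzero digits, at rapidly increasing positions $N_1<N_2<\cdots$.

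For the lower bound, we set $\xi_j = b^{N_j}$ (or a bounded multiple). Then $\xi_j t b^{-n}$ for $0\le n\le N_j$ equals an integer plus a tail $\sum_{i>j} d_i b^{N_j-N_i-n}$. Since $N_{j+1}-N_j$ is huge, this tail is tiny for all $n$ up to nearly $N_j$. The factors with $n$ large, where $\xi_j t b^{-n}$ is genuinely small or comparable to a fixed number of digits, contribute a product bounded below by an absolute constant $c>0$. Hence $|\widehat{\mu}(\xi_j)|\ge c\prod_{n}\cos(\pi\cdot\text{tail}_n)\ge c/2$ once the gaps are large enough. This is far stronger than needed. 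The subtlety is that we also need Rajchman, so the choice must balance both requirements. We choose $N_{j+1}$ inductively so that $\phi(\xi)\le \delta_j$ for all $\xi\ge b^{N_{j+1}}$, where $\delta_j\to0$ is chosen below.

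The main obstacle is proving that $\mu$ is Rajchman, because $|\widehat{\mu}|$ stays near $c$ at $\xi_j$ and a decay mechanism is needed elsewhere. For a general $\xi$ with $b^{N_j}\lesssim \xi < b^{N_{j+1}}$, write $\xi t b^{-n}$ modulo $1$. The dominant contribution comes from the digit $d_{j+1}$ placed at $N_{j+1}$: the quantity $\xi b^{-n} d_{j+1} b^{-N_{j+1}}$ is a slowly varying real number ranging over many scales as $n$ varies. Combined with the contributions of earlier digits, which are integers times $\xi b^{-n-N_i}$, this yields $\gg$ many $n$ for which $\|\xi t b^{-n}\|$ is bounded away from $0$. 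That forces $|\widehat{\mu}(\xi)|\le \rho^{M(\xi)}$ with $M(\xi)\to\infty$. The main technical point is ensuring that $\xi$ cannot make all these fractional parts simultaneously near integers, except at the sparse frequencies $\xi\approx b^{N_j}\times(\text{small integer})$.

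To make this robust we use the standard digit-counting argument. Whenever $\|\xi t b^{-n}\|<1/(2b)$ for consecutive $n$, the base-$b$ digits of $\xi t$ propagate rigidly. The only way to keep all factors near $1$ is for $\xi t$ to be within $o(1)$ of an integer at the relevant scales. Since $t$ is irrational with the prescribed sparse expansion, this happens at only $O(1)$ consecutive scales between $N_j$ and $N_{j+1}$ unless $\xi$ is near $b^{N_j}$. The resulting bound shows that $M(\xi)\to\infty$ uniformly. Since the inductive construction allows $\delta_j$ and the gaps to be chosen freely, \eqref{e:selfsimslowdecay} follows from $|\widehat{\mu}(\xi_j)|\ge c/2$ together with $\phi(\xi_j)\to0$ taken slowly enough relative to $c$. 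More precisely, we choose $N_j$ so large that $\phi(b^{N_j})\le 1$, so that $|\widehat\mu(\xi_j)|/\phi(\xi_j)\ge c/2$.
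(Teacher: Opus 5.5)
Your construction fails at the first step: the two-map family $\{x\mapsto x/b,\ x\mapsto x/b+t\}$ contains no Rajchman measure for any $t\neq 0$, rational or not. Here $t$ is only a scale factor. The measure $\mu$ is the image of a $\{0,1\}$-digit base-$b$ Cantor measure $\nu$ under $x\mapsto tx$, so $\widehat{\mu}(\xi)=\widehat{\nu}(t\xi)$. Your own product formula, evaluated at the non-integer frequency $\xi=b^k/t$, gives
\[ |\widehat{\mu}(b^k/t)|=\prod_{n=0}^{\infty}|\cos(\pi b^{k-n})|=\prod_{m=1}^{\infty}|\cos(\pi b^{-m})|>0 \qquad \mbox{for all } k. \]
Your remark that rational $t$ is non-Rajchman ``by taking $\xi=qb^k$'' overlooks that frequencies are real, not integers. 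In Br\'emont's terms, conjugating by $x\mapsto tx$ puts this IFS in Pisot form. So the digit-counting paragraph cannot be repaired: at $\xi=b^k/t$, every $\xi t b^{-n}$ with $n\le k$ is an exact integer.

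There is also an internal warning sign. You claim $|\widehat{\mu}(\xi_j)|\ge c/2$ along $\xi_j\to\infty$ and call this ``far stronger than needed'', but a uniform lower bound along a sequence tending to infinity is precisely the negation of the Rajchman property. Any valid construction needs the witnessing lower bound to tend to $0$, with the frequencies then pushed out far enough that $\phi$ is smaller still. That computation is also wrong on its own terms: for $n\ge 1$, $b^{N_j}tb^{-n}$ is not an integer plus a small tail. Each digit $d_i$ with $i\le j$ re-enters as a fractional term once $n>N_j-N_i$, and costs a factor of at most about $\cos(\pi/b)$.

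The missing idea is a family in which $t$ is an affine invariant. The paper uses the three maps $\{x/10,(x+1)/10,(x+t)/10\}$ with uniform weights. Under any affine conjugation the ratio of the two translation differences remains $t$, so for irrational $t$ the IFS is never in Pisot form, and Br\'emont's classification gives the Rajchman property with no hands-on decay estimate. Slow decay then comes from the rational approximants $t'=p/10^n$:
\begin{itemize}
\item For these, $|\widehat{\mu_{t'}}(10^L)|\ge c\,3^{-n}$ for every integer $L>n$; this bound does decay in $n$.
\item Choose $L_n$ with $\phi(10^{L_n})\le c\,3^{-n}/2$.
\item The Lipschitz estimate $|\widehat{\mu_t}(\xi)-\widehat{\mu_{t'}}(\xi)|\le 2\pi|\xi||t-t'|$ then gives $|\widehat{\mu_t}(10^{L_n})|\ge\phi(10^{L_n})$ whenever $|t-p/10^n|<\phi(10^{L_n})/(2\pi 10^{L_n})$.
\end{itemize}
Any irrational $t$ lying in infinitely many of these neighbourhoods works. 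Such $t$ form a dense $G_\delta$ set, and explicit Liouville-type examples are easy to write down.
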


The self-similar measures appearing in the proof of Theorem~\ref{t:slowselfsim} are generated by the IFS 
\[ \left\{S_{0}(x) = \frac{x}{10},\, S_{1}(x) = \frac{x+1}{10},\, S_{2}(x) = \frac{x+t}{10}\right\} \] 
where $t\in [0,1]$ and the probability vector $(1/3,1/3,1/3)$ (call this self-similar measure $\mu_t$). For this family of measures we will in fact prove something stronger than Theorem~\ref{t:slowselfsim}. Given any function $\phi \colon [0,\infty)\to (0,1]$ satisfying $\phi(\xi)\to 0$ as $\xi\to\infty$, we will show that for a topologically generic choice of $t$, the measure $\mu_{t}$ is Rajchman and satisfies~\eqref{e:selfsimslowdecay} (Proposition~\ref{p:fullselfsim}). This is in contrast to a result of Shmerkin and Solomyak~\cite[Proposition~3.1]{ShmSol} which implies that for all $t\in [0,1]$ outside of a set of Hausdorff dimension zero, the measure $\mu_{t}$ has polynomial Fourier decay. 

In the proof of Theorem~\ref{t:slowselfsim}, the values of $t$ for which we show that $\mu_t$ satisfies~\eqref{e:selfsimslowdecay} will be irrational, but extremely well approximated by rationals $t'$ for which $\mu_{t'}$ is Rajchman. The Fourier transform of $\mu_t$ will therefore be large at certain frequencies where $\widehat{\mu_{t'}}$ is large, but a result of Br\'emont~\cite{Bre} and the irrationality of $t$ will imply that $\mu_t$ is Rajchman.
Our method of proof also gives explicit values of $t$ for which $\mu_t$ has very slow Fourier decay, as in Theorem~\ref{e:explicitt} below. 
Recall Knuth's up-arrow notation for large integers: $10 \uparrow \uparrow n$ can be defined recursively by 
\begin{equation}\label{e:knuth}
    10 \uparrow \uparrow  1 = 10 \quad \mbox{ and } \quad 10 \uparrow \uparrow n = 10^{10 \uparrow \uparrow (n-1)} \quad \mbox{ for integers } n > 1. 
\end{equation}
\begin{thm}\label{t:explicitselfsim}
    Letting $t$ be the explicit Liouville number 
    \begin{equation}\label{e:explicitt}
        t = \sum_{n=1}^{\infty} \frac{1}{10 \uparrow \uparrow (3n)}, 
    \end{equation}
    the measure $\mu_t$ is Rajchman but 
    \[ \liminf_{n \to \infty} ((\log \log (10 \uparrow \uparrow (3n+2))) |\widehat{\mu_t}(10 \uparrow \uparrow (3n+2))|) > 0. \]
\end{thm}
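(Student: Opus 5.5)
The plan is to work directly with the infinite product formula for $\widehat{\mu_t}$, compare $t$ with its truncations, and invoke Br\'emont~\cite{Bre} only for the Rajchman property. Write $e(x)=e^{2\pi i x}$. Since $\mu_t$ is homogeneous self-similar with ratio $1/10$ and uniform weights, iterating $\mu_t=\frac13\sum_a S_a\mu_t$ gives
\[
\widehat{\mu_t}(\xi)=\prod_{k=1}^{\infty}\frac{1+e(\xi 10^{-k})+e(t\xi 10^{-k})}{3}.
\]

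\emph{Rajchman property.} By Br\'emont's classification~\cite{Bre}, a homogeneous self-similar measure whose contraction ratio is the reciprocal of a Pisot number (here $10$) is non-Rajchman precisely when, after an affine normalisation, the translations lie in $\mathbb{Q}(10)=\mathbb{Q}$. The translations here are $0,1,t$, so this can only happen if $t\in\mathbb{Q}$. The number in~\eqref{e:explicitt} is a Liouville number, hence irrational, so $\mu_t$ is Rajchman. I expect the only work in this step to be checking Br\'emont's normalisation carefully.

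\emph{Lower bound: setup.} Fix $n$ and put
\[
a=10\uparrow\uparrow(3n-1),\qquad b=10\uparrow\uparrow(3n+1),\qquad N=10\uparrow\uparrow(3n+2)=10^{b}.
\]
Split $t=t_n+\varepsilon_n$, where $t_n=\sum_{m\le n}1/(10\uparrow\uparrow 3m)$ and $0<\varepsilon_n\le 2/(10\uparrow\uparrow(3n+3))$. Then $c:=10^{a}t_n$ is an integer whose base-$10$ expansion has exactly $n$ nonzero digits, all equal to $1$, at positions $a-a_m$ with $a_m=10\uparrow\uparrow(3m-1)$. The perturbation is harmless: $\sum_k |\varepsilon_n N 10^{-k}|\le 10^{b}\varepsilon_n$, which is astronomically small. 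So up to a factor $1-o(1)$ we may replace $t$ by $t_n$ in every factor of the product. I then split the factors at $\xi=N$ into three ranges.

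\emph{The three ranges of $k$.}
\begin{itemize}
\item For $k\le b-a$, both $N10^{-k}$ and $t_nN10^{-k}$ are integers, so each factor equals $1$.
\item For $k>b$, substituting $k=b+j$ shows these factors multiply to exactly $\widehat{\mu_t}(1)$, up to the harmless $\varepsilon_n$ correction. This is a fixed constant, and it is nonzero:
 \begin{itemize}
 \item $1+e(x)+e(y)=0$ forces $\{x,y\}\equiv\{1/3,2/3\}$, which is impossible with $x=10^{-j}$;
 \item the product converges absolutely, since each factor is $1+O(10^{-j})$.
 \end{itemize}
\item For $b-a<k\le b$, write $k=b-a+j$ with $1\le j\le a$. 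Then $N10^{-k}$ is an integer and $t_nN10^{-k}=c\,10^{-j}$. The factor becomes $|2+e(\theta_j)|/3$, where $\theta_j=10^{-j}(c \bmod 10^j)$.
\end{itemize}

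\emph{The middle range.} Each such factor is at least $1/3$, and it is at least $1-C\,\|\theta_j\|^2$ when $\theta_j$ is close to an integer. Between consecutive nonzero digits of $c$, the values $\theta_j$ decay geometrically in $j$: once $j$ passes a digit position $a-a_m$, we have $\theta_j\approx 10^{(a-a_m)-j}$. Each of the $n$ digits therefore contributes a product bounded below by an absolute constant $\kappa\in(0,1)$. Hence the middle range contributes at least $\kappa^n$.

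\emph{Conclusion and main obstacle.} Combining the three ranges, $|\widehat{\mu_t}(N)|\ge \tfrac12|\widehat{\mu_t}(1)|\,\kappa^{n}$ for large $n$. Since $\log\log N\approx b\log 10$ grows far faster than $\kappa^{-n}$, this is the claimed bound $|\widehat{\mu_t}(N)|\gtrsim 1/\log\log N$, in fact with a great deal to spare. The main obstacle is the bookkeeping in the middle range: one must check that each isolated digit of $c$ costs only a bounded factor, and I would do this via the geometric decay of $\theta_j$ together with $\prod_j(1-C10^{-2j})>0$.
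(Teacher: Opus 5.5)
Your proof is correct and is essentially the paper's argument. The paper also replaces $t$ by the truncation $t_n=c/10^{a}$ and splits the product for $\widehat{\mu_{t_n}}(10^{b})$ into the same three ranges (Lemma~\ref{lem:Decay lower bound}); it absorbs $t-t_n$ via the Lipschitz estimate of Lemma~\ref{lem:approximation lem}, and gets the Rajchman property from Br\'emont's Pisot-form criterion together with the irrationality of $t$. The one difference is that the paper bounds every middle factor crudely by $1/3$, which already suffices: the resulting bound of order $3^{-a}$ beats $1/\log\log N$, since $\log\log N=\log b+O(1)=(\log 10)\,10^{a}+O(1)$ (your ``$\log\log N\approx b\log 10$'' is really $\log N$, a harmless slip), so your sparse-digit estimate $\kappa^{n}$ is a correct but unnecessary strengthening.
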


\begin{rem}\label{r:moreexplicitexamples}
Theorem~\ref{t:explicitselfsim} just gives one particular example. 
If instead of~\eqref{e:explicitt} we set 
\[ t = \sum_{n=1}^{\infty} \frac{1}{10 \uparrow \uparrow (kn)} \] 
for some integer $k > 3$, then adapting the proof of Theorem~\ref{t:explicitselfsim}, one could show that the Fourier transform of the Rajchman measure $\mu_t$ would decay at least as slowly as 
\[ (\underbrace{\log \circ \dotsb \circ \log}_{k-1 \mbox{ times}} (\xi))^{-1} \] 
along a sequence of integer frequencies $\xi$ which tend rapidly to $\infty$. 
Replacing $\uparrow \uparrow$ by $\underbrace{\uparrow \dotsb \uparrow}_{m \mbox{ times}}$ for some integer $m > 2$ would result in even slower decay. 
\end{rem}

The self-similar measures which we construct to have slow Fourier decay also satisfy some other interesting properties. One such property relates to points in the support of the measure having a slow rate of equidistribution under certain dynamical systems. 
We have already mentioned the result of Davenport, Erd\H{o}s and LeVeque~\cite{DEL} which implies that if a measure $\mu$ has fast enough Fourier decay then $\mu$ almost every $x$ is normal with respect to all bases, i.e. for all integers $b>1$, for $\mu$ almost every $x$ and all intervals $I \subset [0,1]$ we have 
\begin{equation}\label{e:equidistributiondefn}
\lim_{N \to \infty} \frac{1}{N} \# \{1 \leq n \leq N : \{ b^n x \} \in I \} = \mbox{length}(I), 
\end{equation}
where $\{b^n x\}$ denotes the fractional part of $b^n x$. 
Algom, Rodriguez~Hertz and Wang~\cite{AHWequidistribution} have proved that if $\mu$ is a Rajchman self-similar measure, then $\mu$ almost every $x$ is normal for all integers $b>1$. The significance of this result is that it does not assume a rate of Fourier decay. 
A self-contained proof of this result is given by Algom~\cite{AlgomNormality}. 

It is natural to ask for more quantitative equidistribution results. One might hope that under certain conditions one could obtain non-trivial bounds for the left-hand side of~\eqref{e:equidistributiondefn} even if the length of the interval $I$ is allowed to depend on $n$ and shrink to $0$ in a non-summable way. 
Indeed, Theorem~\ref{thm:pvzz} below for arbitrary (i.e. not necessarily self-similar) measures with fast enough Fourier decay is a special case of a result by Pollington, Velani, Zafeiropoulos and Zorin~\cite{PVZZ}. 
To state this theorem we need the following notation. 
Given a sequence of positive numbers $\psi(1),\psi(2),\dotsc$ we let $\Psi(N) \coloneqq \sum_{n=1}^N \psi(n)$ for each positive integer $N$. 
Given also $\gamma \in [0,1)$, for each $x \in \R$ and $N \in \N$ let 
\begin{equation}\label{e:definepvzzcounts}
R(x,N;\gamma,\psi,b) \coloneqq \# \{ 1 \leq n \leq N : \dist(b^{n-1} x - \gamma , \Z) \leq \psi(n) \}. 
\end{equation}
\begin{thm}[\cite{PVZZ}, Theorem~3]\label{thm:pvzz}
    Let $\mu$ be a non-atomic Borel probability measure supported inside $[0,1]$. Assume that there exists $\eta > 2$ such that $\mu(\xi) = O((\log |\xi|)^{-\eta})$ as $|\xi| \to \infty$. Let $\gamma \in [0,1)$ and $\psi \colon \N \to (0,1]$. Then for all $\varepsilon > 0$ and $\mu$ almost every $x$, 
    \begin{equation}\label{e:pvzzconclusion}
    R(x,N;\gamma,\psi,b) = 2 \Psi(N) + O((\Psi(N))^{1/2} (\log (\Psi(N) + 2))^{2+\varepsilon}) 
    \end{equation}
    as $N \to \infty$. 
\end{thm}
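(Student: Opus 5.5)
Since this is a special case of \cite[Theorem~3]{PVZZ}, I would follow the standard second-moment route of metric Diophantine approximation: bound variances and then apply a Schmidt-type counting lemma. For $n \in \N$ let $f_n$ be the indicator of $\{x : \dist(b^{n-1}x - \gamma, \Z) \leq \psi(n)\}$, so that $R(x,N;\gamma,\psi,b) = \sum_{n \leq N} f_n(x)$. Each $f_n$ has Lebesgue mean exactly $2\psi(n)$ when $\psi(n) \leq 1/2$, and the case $\psi(n) > 1/2$ is trivial. The key input is the form of Schmidt's lemma given in Harman's \emph{Metric Number Theory} (Lemma~1.5). It says that if for all $M < N$
\begin{equation*}
\int \Bigl( \sum_{n=M}^{N} \bigl(f_n(x) - 2\psi(n)\bigr) \Bigr)^2 \, d\mu(x) \leq C \sum_{n=M}^{N} \psi(n),
\end{equation*}
then \eqref{e:pvzzconclusion} holds for $\mu$ almost every $x$, with exactly the error term $\Psi(N)^{1/2} (\log(\Psi(N)+2))^{2+\varepsilon}$. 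So everything reduces to proving this variance bound with respect to $\mu$.

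To compute the variance, I would replace each $f_n$ by smooth $1$-periodic functions $f_n^{\pm}$ with $f_n^- \leq f_n \leq f_n^+$. These are trapezoids or Fejér-smoothed indicators of the intervals of radii $\psi(n)(1 \mp \delta_n)$, and their Lebesgue means differ from $2\psi(n)$ by $O(\psi(n)\delta_n)$. Their Fourier coefficients satisfy $|c_k^{(n)}| \ll \min(\psi(n), |k|^{-1}, \psi(n)^{-1}\delta_n^{-1}|k|^{-2})$. Expanding the square gives a double sum over $M \leq m \leq n \leq N$ of
\begin{equation*}
\sum_{k,l} c_k^{(m)} \overline{c_l^{(n)}}\, e(\cdots)\, \widehat{\mu}\bigl(k b^{m-1} - l b^{n-1}\bigr).
\end{equation*}
\begin{itemize}
\item The terms with $k b^{m-1} = l b^{n-1}$, together with $k = l = 0$, reproduce the Lebesgue computation. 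This gives the main term $4\psi(m)\psi(n)$, which cancels against the subtracted means, plus the classical correlation error. That error is $\ll \psi(n)\,\gcd$-type quantities, which for lacunary $b^{n}$ sums to $O(\sum_n \psi(n))$.
\item For the remaining terms the frequency $k b^{m-1} - l b^{n-1}$ is a nonzero integer. Since $b$ is an integer, its absolute value is at least $b^{m-1}$ apart from a sparse set of near-cancellations. The hypothesis $|\widehat{\mu}(\xi)| \ll (\log|\xi|)^{-\eta}$ then gives a bound $\ll m^{-\eta}$ (up to logarithmic factors in $k,l$) on each such term.
\end{itemize}

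The main obstacle is showing that these off-diagonal contributions sum to $O(\sum_{n=M}^{N} \psi(n))$ and not to something like $(N-M)^2 m^{-\eta}$. First I would try the following. Truncate the Fourier series at $|k| \leq K_m$ with $K_m$ a power of $m$, chosen so that the tails are $O(\psi(m) m^{-2})$ by the coefficient bound. For fixed $m$, group the $n$ by the size of the frequency: when $n - m$ is large, $|k b^{m-1} - l b^{n-1}| \geq b^{n-2}$ for $l \neq 0$, so the decay is $\ll n^{-\eta}$. The coefficient sums $\sum_{|k| \leq K} |c_k| \ll \log K$ contribute only logarithmic losses, and these losses are absorbed because $\eta > 2$. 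This makes $\sum_{m<n} (\log m)^{2} n^{-\eta}$, and its $\psi$-weighted variants, converge. If the contribution is still too large when $\psi$ is very small, I would weight by $\psi$ using $|c_k^{(n)}| \leq \psi(n)$ on the low frequencies. Finally, I would let $\delta_n \to 0$ slowly, so that the sandwich errors $\psi(n)\delta_n$ are also $O(\psi(n))$, and conclude via Harman's lemma. Non-atomicity of $\mu$ is only needed to ensure the decay hypothesis is not vacuous.
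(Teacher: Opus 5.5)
The paper gives no proof of this statement; it is quoted from \cite{PVZZ}. Your skeleton is the standard route to results of this shape: a second-moment bound with respect to $\mu$ fed into Harman's form of Schmidt's lemma, with the off-diagonal frequencies controlled by the decay of $\widehat{\mu}$, and $\eta>2$ making $\sum_n n^{1-\eta}(\log n)^2$ finite. However, the smoothing step fails as you describe it.

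Your $f_n^{\pm}$ have Lebesgue means $2\psi(n)+\epsilon_n^{\pm}$ with $|\epsilon_n^{\pm}|\asymp\psi(n)\delta_n$, so the $k=l=0$ terms do not cancel the subtracted means $2\psi(n)$. Suppose you centre at $2\psi(n)$. The $\mu$-mean of $\sum_n\bigl(f_n^{+}-\int_0^1 f_n^{+}\bigr)$ is $O(1)$, so $\int\bigl(\sum_{M\le n\le N}(f_n^{+}-2\psi(n))\bigr)^2\,d\mu$ is at least of order $\bigl(\sum_{M\le n\le N}\psi(n)\delta_n\bigr)^2$. For $\psi\equiv 1/4$, $\delta_n=1/\log(n+2)$ and $M=1$, this is of order $N^2/(\log N)^2$, not $O(N)$. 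So the hypothesis of Harman's lemma cannot be verified this way.

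Suppose instead you apply the lemma to $f_n^{\pm}$ centred at their own means and then sandwich the counts. The resulting error $\sum_{n\le N}\psi(n)\delta_n$ is $o(\Psi(N))$ but not $O(\Psi(N)^{1/2}(\log(\Psi(N)+2))^{2+\varepsilon})$.

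Your requirement $\psi(n)\delta_n=O(\psi(n))$ is empty, since any bounded $\delta_n$ meets it. What you actually need is $\sum_{M<n\le N}\psi(n)\delta_n\ll\bigl(\sum_{M<n\le N}\psi(n)\bigr)^{1/2}$ for all $M<N$. This ties $\delta_n$ to $\Psi$: for instance $\delta_n=(1+\Psi(n))^{-1/2}$ works, since then the sum is at most $2(\Psi(N)-\Psi(M))^{1/2}$. You must then check that the losses $\log(1/\delta_n)$ in the coefficient sums stay $O(\log n)$; they do, because $\Psi(n)\le n$.

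The second gap is very small $\psi$, which the hypotheses allow. Take a trapezoid of width $\asymp\psi(n)$ with ramps of width $\psi(n)\delta_n$. Its coefficients $c_k^{(n)}$ are non-negligible up to $|k|\approx 1/(\psi(n)\delta_n)$. Moreover $\sum_k|c_k^{(n)}|\asymp 1+\log(1/\delta_n)$ carries no factor of $\psi(n)$: there are $\asymp 1/\psi(n)$ coefficients of size $\psi(n)$ on $|k|\le 1/\psi(n)$, and beyond that $|c_k^{(n)}|$ behaves like $1/|k|$. So weighting by $\psi$ gains nothing, and your truncation level $K_m$ is a power of $m$ only when $\psi(m)\ge m^{-C}$.

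The frequency $kb^{m-1}-lb^{n-1}$ is always a nonzero multiple of $b^{m-1}$, so you always get decay $m^{-\eta}$. You only get decay in $n$, however, when $|k|\le b^{n-m}/2$. For each $m$, the remaining range covers at least about $\log_b(1/\psi(m))$ values of $n$, each costing about $m^{-\eta}$. The resulting bound is at least of order $\sum_m m^{-\eta}\log(1/\psi(m))$, which is infinite for, say, $\psi(m)=\exp(-e^{m})$.

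You should remove this case first. One smoothed Fourier expansion gives $\mu(\{x:\dist(b^{n-1}x-\gamma,\Z)\le n^{-2}\})\ll n^{-2}+n^{-\eta}$. By Borel--Cantelli, the $n$ with $\psi(n)<n^{-2}$ then contribute $O_x(1)$ to $R(x,N;\gamma,\psi,b)$ and $O(1)$ to $2\Psi(N)$.

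Even after this reduction, the off-diagonal bounds, of total size $\sum_{M\le n\le N}n^{1-\eta}(\log n)^3$, do not scale with $\psi$. So you should not aim for $O(\sum_{M\le n\le N}\psi(n))$. Instead, put this summable sequence into the $\phi_n$ of Harman's lemma, so that $\Phi(N)=\Psi(N)+O(1)$ and the error term is unaffected. With these two repairs your outline goes through.
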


The results of Algom~et~al. and Pollington~et~al. together raise the following natural question. 
\begin{ques}
    Without assuming a rate of Fourier decay, if $\mu$ is a Rajchman self-similar supported inside $[0,1]$, does~\eqref{e:pvzzconclusion} necessarily hold for $\mu$ almost every $x$? 
\end{ques}

The following result shows that the answer is negative in a strong sense. 

\begin{thm}\label{thm:slowequi}
    There exists a monotonically decreasing sequence of positive numbers $\psi(1),\psi(2),\dotsc$, and $\gamma \in (0,1)$, such that the following hold: 
    \begin{enumerate}
        \item\label{i:psigrowsquickly} $\limsup_{N \to \infty} \frac{\log \Psi(N)}{\log N} = 1$, so in particular $\lim_{N \to \infty} \Psi(N) = \infty$. 
        \item Letting $X_t$ be the support of the self-similar measure $\mu_t$ from Theorem~\ref{t:explicitselfsim}, for all $x \in X_t$ and $N \in \N$ we have $R(x,N;\gamma,\psi,10) = 0$. 
    \end{enumerate}
\end{thm}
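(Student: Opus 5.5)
The plan is to read everything off the decimal digits. Every $x \in X_t$ can be written as $x = \sum_{k \ge 1} d_k 10^{-k}$ with $d_k \in \{0,1,t\}$. By~\eqref{e:explicitt}, $t = \sum_{i \ge 1} 10^{-M_i}$, where $M_i \coloneqq 10 \uparrow \uparrow (3i-1)$. So a digit $d_k = t$ places a $1$ at every position $k + M_i$, $i \ge 1$. Consequently, for each $n$,
\[
\{10^{n-1}x\} = \Big\{\sum_{p \ge 1} c_p(x,n)\, 10^{-p}\Big\}, \qquad c_p(x,n) = \mathbf 1[d_{p+n-1} = 1] + \#\{ i \ge 1 : d_{p+n-1-M_i} = t\},
\]
where indices $\le 0$ are ignored. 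Hence $c_p(x,n) \le 1 + j(n+p)$, with $j(m) \coloneqq \#\{i : M_i < m\}$. The first step is to record this identity. The point is that the counts $c_p$ are controlled by $j(n)$, which grows slower than any iterated logarithm of $n$. On most of the range between consecutive $M_i$, only positions $p$ at distance about $M_{i+1} - n$ see the next term.

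Next I fix $\gamma$ and aim for an inequality of the form
\[
\dist(10^{n-1}x - \gamma, \Z) \ge c\, 10^{-C(j(n)+1)} \qquad \text{for all } x \in X_t,\ n \in \N,
\]
with absolute constants $c, C>0$. To get it, I look only at the first $L = L(j(n))$ output positions. Their digit sum is a sum of at most $j(n)+2$ "digit strings", each with digits in $\{0,1\}$. I then choose $\gamma$, in a nested way adapted to the blocks $(M_i, M_{i+1}]$, so that its first $L$ decimal digits are never produced by such a bounded sum, even allowing a carry from the tail, which lies in $[0,(j+2)/9]\cdot 10^{-L}$. I would first try $\gamma$ with the digit $5$ in positions spaced further and further apart, combined with separating blocks of $9$'s and $0$'s to absorb carries.

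Given this, set $\psi(n) \coloneqq \tfrac{c}{2}10^{-C(j(n)+1)}$. This is monotone decreasing because $j$ is nondecreasing, and it gives $R(x,N;\gamma,\psi,10) = 0$ for all $x \in X_t$ and all $N$. For item~\eqref{i:psigrowsquickly}, take $N = M_{i+1}$. Then $\psi(n) \ge \tfrac{c}{2}10^{-C(i+2)}$ for all $n \le N$, so $\Psi(N) \ge N \cdot 10^{-O(i)}$. Since $i$ is at most an iterated logarithm of $N$, we have $\log\Psi(N)/\log N \to 1$ along this subsequence. The upper bound $\Psi(N) \le N$ is trivial.

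The main obstacle is clearly the second step, the uniform separation from $\gamma$. Once $j(n) \ge 9$, the counts $c_p$ can a priori take every value in $\{0,\dots,9\}$ independently at each of the first few positions: the indices $p+n-1-M_i$ are distinct for distinct $(p,i)$. If that really happens, $\{10^{n-1}x\}$ fills the whole circle and no $\gamma$ works. So before anything else I would check carefully whether the digits $d_{p+n-1-M_i}$ feeding different $p$ are genuinely independent, or whether the coupling introduced by a single $t$-digit, which contributes to all positions $k+M_i$ at once, restricts the attainable digit vectors. If independence holds, I would restrict to the part of $X_t$ actually used in the paper's later argument, or else revisit the precise meaning of $X_t$. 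This is the step I would try to settle first, by computing the attainable set for small $j$.
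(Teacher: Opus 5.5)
\textbf{There is a genuine gap in your second step.} The inequality $\dist(10^{n-1}x-\gamma,\Z)\ge c\,10^{-C(j(n)+1)}$ is not merely unproven: it fails for every choice of $\gamma$, $c$ and $C$.

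Your own distinctness observation already answers the question you left open, and the answer is negative. Let $j=j(n)\ge 10$, so $M_j<n$, and set $L\coloneqq M_{j-8}-M_{j-9}$.
\begin{itemize}
\item For $1\le p\le L$ and $j-8\le i\le j$, the coding positions $p+n-1$ and $p+n-1-M_i$ are pairwise distinct and all at least $1$.
\item A $t$-digit at position $p+n-1-M_i$ contributes $10^{M_i-M_{i'}-p}$ to $10^{n-1}x$ through the term $10^{-M_{i'}}$ of $t$.
\item This contribution is an integer if $i'<i$ (because $M_i-M_{i'}\ge L\ge p$), equals $10^{-p}$ if $i'=i$, and is below $10^{-(M_{j-7}-M_{j-8})}$ if $i'>i$.
\end{itemize}
Set every other coding digit to $0$. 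Then each of $c_1,\dots,c_L$ can be prescribed independently to be any value in $\{0,\dots,9\}$. There are no carries, and the remaining tail is far below $10^{-L}$.

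Hence for every $\gamma$ some $x\in X_t$ satisfies $\dist(10^{n-1}x-\gamma,\Z)\le 2\cdot 10^{-L}$. This is astronomically smaller than $c\,10^{-C(j+1)}$. Consequences:
\begin{itemize}
\item Any admissible $\psi$ must decay far faster than $10^{-Cj(n)}$.
\item No nested digit pattern for $\gamma$ (isolated $5$'s, blocks of $9$'s and $0$'s) can avoid bounded sums once nine or more strings are present.
\item Restricting to part of $X_t$ is not available, since the statement concerns all $x\in X_t$.
\end{itemize}

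\textbf{The missing idea} is to let $\psi$ be tiny on each stage and to obtain separation by counting rather than by digit restrictions. The paper sets $\psi(n)=10^{-M_{j+1}}$ for $M_{j+1}-M_j<n\le M_{j+2}-M_{j+1}$. For such $n$, after multiplying by $10^{n-1}$:
\begin{itemize}
\item coding positions far to the left contribute integers;
\item positions $\ge n+M_{j+1}$, together with all contributions of $t^{(2)}=\sum_{k\ge j+2}10^{-M_k}$, add at most $3\cdot 10^{-M_{j+1}}$ (this uses $n\le M_{j+2}-M_{j+1}$);
\item what remains is determined by a window of $2M_{j+1}$ coding symbols and by $t^{(1)}=\sum_{k\le j+1}10^{-M_k}$. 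It therefore lies in a set $B_j$ of at most $9^{M_{j+1}}$ numbers, and $B_j$ does not depend on $n$.
\end{itemize}
Since $10^{M_{j+1}-M_j}>1000\cdot 9^{M_{j+1}}$, pigeonhole gives digits $M_j+1,\dots,M_{j+1}$ of $\gamma$, extending those fixed at earlier stages, that keep $\gamma$ at distance at least $50\cdot 10^{-M_{j+1}}$ from $B_j$ modulo $1$.

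Item~(1) survives this much smaller $\psi$ only because the stages are enormously long. Indeed $\psi\equiv 10^{-M_j}$ on a range of length about $M_{j+1}$, and $10^{M_j}\le \log_{10}M_{j+1}$. Hence $\Psi(M_{j+1})\gtrsim M_{j+1}/\log M_{j+1}$, and the $\limsup$ equals $1$.
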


The idea of the proof is to approximate $t$ by the rational number $t_k \coloneqq \sum_{n=1}^{k} \frac{1}{10 \uparrow \uparrow (3n)}$ and use the fact that most points in $X_{t_k}$ and $X_t$ with the same coding have the same decimal expansion for a long initial segment depending upon $k$. 
Crucially, normality in base $10$ fails for points in $X_{t_k}$. The size of the interval which witnesses this failure of normality decreases to $0$ in way that depends on $k$, but this does not prevent property~\eqref{i:psigrowsquickly} from holding.

\subsection{Nonlinear self-conformal measures}
Beyond the self-similar setting, there has been much work proving (qualitative and quantitative) Fourier decay bounds for nonlinear self-conformal measures supported on $\R$ \cite{ACWW25,AHW,BB25,BS,BourgainDyatlovFourier,BYqualitative,JordanSahlsten,Kau2,LPS,QR,SahlstenStevens}. These results demonstrate that many families of nonlinear self-conformal measures have polynomial Fourier decay. It is natural to ask whether any nonlinearity implies polynomial Fourier decay. This is expressed more formally in the following guiding question posed by Sahlsten~\cite{SahlstenSurvey}. 

\begin{ques}[Problem~3.8 from \cite{SahlstenSurvey}]
\label{q:Sahlsten question}
Suppose $\{S_{a}\}_{a\in \A}$ is a $C^{1+\alpha}$ IFS such that one of the maps $S_{a}$ is not a similitude. Does the Fourier transform of every self-conformal measure associated
to $\{S_{a}\}_{a\in \A}$ decay polynomially? 
\end{ques}
We will see that the answer to this question is negative in general. Nonetheless, much progress relating to this question has been made in recent years. It is now known that the answer to Question~\ref{q:Sahlsten question} is yes when each contraction is real analytic. This follows from the results in~\cite{ACWW25,AHW,BB25,BS}. In the context of $C^{2}$ IFSs, if a self-conformal measure does not have polynomial Fourier decay then it must be $C^{2}$ conjugated to a $C^2$ linear IFS. 
By this we mean that there is some $C^{2}$ diffeomorphism $h \colon [0,1] \to [0,1]$ and $C^2$ IFS $\mathcal{T} = \{T_a\}_{a\in \A}$ such that for all $a\in \A$ and $x \in [0,1]$ we have $T_a(h(x)) = h(S_a(x))$, and such that for all $y$ in the attractor of $\mathcal{T}$ and $a\in \A$ we have $T_a''(y) = 0$. 
This is a direct consequence of \cite[Theorem~1.1]{AHW} (see also the related result \cite[Theorem~1.4]{BS}). This result motivates us to study self-conformal measures that can be realised as pushforward measures of self-conformal measures arising from linear IFSs. 
In this paper we will be interested in the special case of self-conformal measures that can be realised as pushforwards of self-similar measures. 
We emphasise that under very mild assumptions on the pushforward map, the pushforward of a self-similar measure can indeed be realised as a self-conformal measure (see Lemma~\ref{lem:pushforwardconformal}). 
    
The following example shows that the answer to Question~\ref{q:Sahlsten question} is negative for a simple reason, even in the family of $C^{\infty}$ IFSs, and that a further refinement is required. 

\begin{exmp}
\label{Example:self-similar image}
Let $\nu$ be the Cantor--Lebesgue measure on $[0,1]$ (so $T_{1}(x)=x/3$ and $T_{2}(x) = (x+2)/3$ with the $(1/2,1/2)$ probability vector). Let $\mu=f\nu$ where $f \colon [0,1]\to [0,1]$ is an increasing $C^{\infty}$ diffeomorphism which is affine on $[2/9,1/3]$ and such that 
\begin{equation}\label{e:wheresecondderivdoesntvanish1} 
    f''(x) \neq 0 \qquad \mbox{ for all } x \in [0,2/9) \cup (3/9,1]. 
    \end{equation} We claim that it is possible to choose $f$ so that the following properties hold:
    \begin{enumerate}
        \item $\mu$ is the self-conformal measure for the $C^{\infty}$ IFS 
        \[ \{S_{a} \coloneqq f\circ T_{a}\circ f^{-1} \colon [0,1]\to [0,1]\}_{a=1}^{2} \] 
        and the probability vector $(1/2,1/2)$. 
        \item For $a\in \{1,2\}$ we have $S_{a}''(x)\neq 0$ apart from at at most finitely many $x\in [0,1]$.
    \end{enumerate}
We will explain how we can choose $f$ so that these properties are satisfied in Section~\ref{ss:Examples} after we have established some useful preliminary machinery. 
It can be shown using the results of~\cite{BB25,ACWW25} that~\eqref{e:wheresecondderivdoesntvanish1} implies that the restriction of $\mu$ to $f([0,1/9])\cup f([2/3,1])$ has polynomial Fourier decay. However, the restriction of $\mu$ to $f([2/9,1/3])$ is not a Rajchman measure. This is because $f$ is an affine map when restricted to $[2/9,1/3]$ and $\nu|_{[2/9,1/3]}$ is not a Rajchman measure, which follows from $\nu$ not being a Rajchman measure and self-similarity. 
It follows that $\mu$ is not a Rajchman measure. 
This example more generally shows how it is possible for a piece of a ``nonlinear'' self-conformal measure to be given by an affine image of a self-similar measure. Indeed it is possible to exploit this observation together with Theorem~\ref{t:slowselfsim} to show that there exist ``nonlinear'' self-conformal measures for $C^{\infty}$ IFSs that are Rajchman but have arbitrarily slow Fourier decay.

The issue with this example is that the way we measure nonlinearity, i.e. that for all $a\in \{1,2\}$ we have $S_{a}''(x)\neq 0$ apart from at at most finitely many $x\in [0,1]$ (or indeed the weaker hypothesis from Question~\ref{q:Sahlsten question} of not being a similitude), is misleading and hides the obvious linearity that is present. 
The important point to realise is that $\mu$ is also a self-conformal measure for the IFS 
\[ \{ S_{(a_{1},a_{2})} \coloneqq f\circ T_{a_1}\circ T_{a_{2}}\circ f^{-1} \colon [0,1]\to [0,1]\}_{1\leq a_{1},a_{2}\leq 2} \] 
and the probability vector $(1/4,1/4,1/4,1/4)$. However, for this IFS it can be shown that $S_{(1,2)}''(x)=0$ for all $x\in [f(2/9),f(1/3)]$.
\end{exmp}
In light of Example~\ref{Example:self-similar image}, the following refinement of Question~\ref{q:Sahlsten question} is natural.

\begin{ques}
\label{Q:refinedquestion}
Suppose $\{S_{a}\}_{a\in \A}$ is a $C^{1+\alpha}$ IFS on $[0,1]$ such that for every $k\in \N$ and $(a_{1},\dotsc, a_{k})\in \A^{k}$ we have $(S_{a_{1}}\circ \cdots \circ S_{a_{k}})''(x)\neq 0$ apart from at at most finitely many $x\in [0,1]$. 
Does it follow that the Fourier transform of every self-conformal measure associated to $\{S_{a}\}$ decays polynomially?  
\end{ques}

Although we will see that the answer to Question~\ref{Q:refinedquestion} is still negative, the validity of the question is supported by the following theorem and its corollary.

\begin{thm}\label{t:pushisrajchman}
    Let $\mu$ be a self-similar measure on $[0,1]$ for an IFS $\{T_{a}\}_{a\in \A}$. Let $f\colon [0,1] \to \RR$ be a $C^2$ map, and let 
    \[ Z \coloneqq \{ x \in [0,1] : f''(x) = 0 \}. \] 
    Then the following statements hold: 
    \begin{enumerate}
        \item\label{i:pushrajchman} Suppose that $\mu(Z)=0$ (note that this is satisfied in particular if $Z$ is finite). Then the image measure $f\mu$ is Rajchman.
        \item\label{i:measureofzeroset} Suppose that $\mu(Z)>0$, that $f'(x) \neq 0$ for all $x \in [0,1]$, and that $f\mu$ is a self-conformal measure for the IFS $\{S_{a} \coloneqq f\circ T_{a}\circ f^{-1}\}_{a\in \A}$.\footnote{Due to the assumption $f'(x) \neq 0$ for all $x \in [0,1]$ and Lemma~\ref{lem:pushforwardconformal}, it is always the case that we can iterate our IFS such that the third condition is satisfied.}
        Then there exists a word $(a_{1},\dotsc,a_{k})\in \cup_{n=1}^{\infty}\A^{n}$ such that
        \[ f\mu\left( \{ x:(S_{a_{1}}\circ \dotsb \circ S_{a_{k}})''(x) = 0 \} \right) > 0. \]
    \end{enumerate}
    \end{thm}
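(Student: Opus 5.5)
For part~\eqref{i:measureofzeroset} I would reduce everything to a statement about $\mu$ and the affine maps $T_w$. For part~\eqref{i:pushrajchman} I would linearise $f$ on small cylinders and control the resulting average of $\widehat{\mu}$ using Wiener's lemma.

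\emph{Part~\eqref{i:measureofzeroset}.}
\begin{itemize}
\item[(a)] Write $g = f^{-1}$ and, for a word $w$, let $T_w(x) = r_w x + b_w$. Then $S_w = f\circ T_w\circ g$. Using $g'' = -f''(g)/f'(g)^3$, a direct computation gives
\[ S_w''(f(x)) = \frac{r_w^2 f''(T_w x)}{f'(x)^2} - \frac{r_w f'(T_w x) f''(x)}{f'(x)^3}. \]
\item[(b)] It follows that $S_w''(f(x)) = 0$ whenever $x \in Z \cap T_w^{-1}(Z)$. Since $f\mu(f(E)) = \mu(E)$, it suffices to find a word $w$ with $\mu(Z\cap T_w^{-1} Z)>0$.
\item[(c)] Lebesgue density in $\R$ is unreliable here, because overlaps mean $\mu(T_w[0,1])$ need not be comparable to $p_w$. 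So I would lift to the coding space instead. Let $\mathbb{P}$ be the Bernoulli measure on $\A^{\N}$ and $\pi$ the coding map, so that $\mu = \pi\mathbb{P}$ and $\pi(w\omega) = T_w\pi(\omega)$.
\item[(d)] The set $\tilde Z = \pi^{-1}(Z)$ has $\mathbb{P}(\tilde Z) = \mu(Z) > 0$. By the martingale convergence theorem, the relative measures $\mathbb{P}(\tilde Z\cap[w])/\mathbb{P}([w])$ tend to $1$ along the cylinders containing $\mathbb{P}$-a.e.\ point of $\tilde Z$.
\item[(e)] Choose $w$ with this ratio at least $1-\eta$, where $\eta < \mu(Z)$. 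Since $\mathbb{P}$ is Bernoulli, this ratio equals $\mathbb{P}\{\omega : w\omega\in\tilde Z\} = \mu(T_w^{-1}Z)$.
\item[(f)] Hence $\mu(T_w^{-1}Z)\ge 1-\eta$, and so $\mu(Z\cap T_w^{-1}Z) \ge \mu(Z)-\eta > 0$. Applying (b), this $w$ is the required word.
\end{itemize}

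\emph{Part~\eqref{i:pushrajchman}.} Fix $\delta>0$. Since $\mu(Z)=0$ and $Z$ is closed, choose an open neighbourhood $U \supset Z$ with $\mu(\overline U)<\delta$. On $[0,1]\setminus U$ we have $|f''|\ge c>0$. Now fix a large frequency $\xi$.
\begin{itemize}
\item[(a)] Take the stopping-time family $W$ of words $w$ with $r_w \approx \epsilon\xi^{-1/2}$, using minimal contraction ratio comparability. Then $\mu=\sum_{w\in W}p_w T_w\mu$.
\item[(b)] Words whose cylinder meets $U$ contribute at most about $\delta$ in total. For the other words, Taylor expansion of $f$ at $y_w = T_w(0)$ gives
\[ \widehat{f\mu}(\xi) = \sum_{w} p_w\, e(\xi f(y_w))\, \widehat{\mu}\bigl(\xi r_w f'(y_w)\bigr) + O(\epsilon^2 + \delta). \]
The quadratic error is of order $\xi r_w^2 \lesssim \epsilon^2$.
\item[(c)] By Cauchy--Schwarz, it remains to show that $\sum_w p_w |\widehat{\mu}(\xi r_w f'(y_w))|^2$ is small. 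Grouping by the finitely many possible values of $r_w$, this is roughly $\int_{[0,1]\setminus U}|\widehat{\mu}(T f'(y))|^2\,d\mu(y)$ with $T\approx \epsilon\xi^{1/2}\to\infty$.
\end{itemize}

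The main obstacle is showing that this last integral tends to $0$. The frequencies $T f'(y)$ move at speed $T|f''|\ge cT$, and $f'$ is locally a diffeomorphism off $U$. So the integral is an average of $|\widehat{\mu}|^2$ over $[-CT,CT]$ against the measure $\rho = f'(\mu|_{[0,1]\setminus U})$ rescaled by $T$.
\begin{itemize}
\item[(1)] A non-atomic self-similar measure is uniformly H\"older: $\mu(I)\le C|I|^{\alpha}$ for some $\alpha > 0$. Hence $\rho$ is too.
\item[(2)] Wiener's lemma gives $\frac{1}{2S}\int_{-S}^{S}|\widehat{\mu}|^2 \to 0$ for non-atomic $\mu$. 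Since $\widehat{\mu}$ is Lipschitz, the same holds on any window $[A,A+L]$ of length $L$ with $L\to\infty$. The set where $|\widehat{\mu}|>\kappa$ meets such a window in a union of unit intervals of small total proportion.
\item[(3)] Split $\rho$ into pieces at scale $L/T$. Each piece then sees a window of length $L$, and the H\"older bound plus the Lipschitz property should give that the $\rho$-mass landing in the bad set is small.
\end{itemize}
I expect (3) to be the delicate step. Wiener's lemma only controls Lebesgue density, while $\rho$ may be singular. If the smoothing argument fails, I would instead use the Erd\H{o}s--Kahane-type structure of self-similar $\widehat{\mu}$: large values propagate under multiplication by the contraction ratios. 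Then $\delta,\epsilon\to0$ finishes the proof.
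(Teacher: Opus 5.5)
Your part~(2) is correct and differs only mildly from the paper. You find the word $w$ via a density point of $\tilde Z=\pi^{-1}(Z)$ in $\A^{\N}$: martingale convergence gives $\mu(T_w^{-1}Z)>1-\eta$. The paper instead applies Poincar\'e recurrence to the shift to get $m(\tilde A\cap\sigma^{-n}\tilde A)>0$, and then extracts a word $c_1\dotsc c_n$. Both arguments end in the same reduction, via the second-derivative formula, to $\mu(Z\cap T_w^{-1}Z)>0$.

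Part~(1), however, has a genuine gap, and it sits exactly where the difficulty lies. Your first reduction is the paper's: take a stopping and discard the cylinders meeting a neighbourhood of $Z$, at cost $\mu(U_r)$. What remains is the claim that $(f\circ T_w)\mu$ is Rajchman whenever $(f\circ T_w)''=r_w^2\,f''\circ T_w$ has no zeros on $[0,1]$. That is, a $C^2$ image with non-vanishing second derivative of an arbitrary self-similar measure (possibly non-Rajchman, possibly non-homogeneous) is Rajchman, and you do not prove this.

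Your step~(3) cannot be closed with Wiener's lemma and a H\"older bound. Wiener only says that $\{\xi\in[A,A+L]:|\widehat\mu(\xi)|>\kappa\}$ is covered by $o(L)$ unit intervals. After rescaling, a piece of $\rho$ with Frostman exponent $\alpha<1$ may give each unit interval relative mass of order $L^{-\alpha}$, so the bound you obtain is $o(L)\,L^{-\alpha}$, which need not even stay bounded. You would need a much stronger sparsity of the large values of $\widehat\mu$, something like an Erd\H{o}s--Kahane covering by $L^{\delta}$ unit intervals with $\delta$ below the Frostman exponent of $\rho$. For non-homogeneous $\mu$ there is not even a product formula for $\widehat\mu$ to derive this from, and your closing sentence does not supply it.

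The paper does not attempt any of this. It invokes Theorem~\ref{t:bakerbanaji} (from \cite{BB25}; cf.\ \cite{ACWW25}) on each of the finitely many $\a\in\mathcal T$ to conclude that $(f\circ T_{\a})\mu$ is Rajchman. This gives $\limsup_{|\xi|\to\infty}|\widehat{f\mu}(\xi)|\le\mu(N_{2r})\to\mu(Z)=0$. You should cite that result at this point; proving it from scratch is the main content of \cite{BB25}.
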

\begin{cor}
\label{cor:rajchman}
Suppose $\{S_{a} \colon [0,1] \to [0,1] \}_{a\in \A}$ is a $C^{2}$ IFS such that for every $(a_{1},\dotsc,a_{k})\in \cup_{n=1}^{\infty}\A^{n}$ we have $(S_{a_{1}}\circ \cdots \circ S_{a_{k}})''(x)\neq 0$ apart from at at most finitely many $x\in [0,1]$. Then every self-conformal measure associated to $\{S_{a}\}_{a\in \A}$ is Rajchman if either of the following properties is satisfied:
\begin{enumerate}
\item $\{S_{a}\}_{a\in \A}$ cannot be conjugated to a $C^{2}$ linear IFS by any $C^2$ diffeomorphism. 
\item $\{S_{a}\}_{a\in \A}$ can be conjugated to a self-similar IFS by a $C^2$ diffeomorphism. 
\end{enumerate}
\end{cor}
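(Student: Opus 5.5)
Corollary~\ref{cor:rajchman} follows from Theorem~\ref{t:pushisrajchman} together with the dichotomy from \cite[Theorem~1.1]{AHW} recalled above. Fix a self-conformal measure $\mu$ for $\{S_a\}_{a\in\A}$ with probability vector $(p_a)$. We treat the two hypotheses separately.

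\emph{Case (1).} Suppose $\{S_a\}$ cannot be $C^2$ conjugated to a $C^2$ linear IFS. By \cite[Theorem~1.1]{AHW} (see also \cite[Theorem~1.4]{BS}), a $C^2$ self-conformal measure without polynomial Fourier decay forces such a conjugacy. Hence $\mu$ has polynomial Fourier decay, and in particular $\mu$ is Rajchman. In this case the finiteness hypothesis on the zero sets of second derivatives is not needed.

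\emph{Case (2).} Suppose there is a $C^2$ diffeomorphism $h$ and a self-similar IFS $\{T_a\}$ with $h\circ S_a = T_a\circ h$. Set $f = h^{-1}$, so that $S_a = f\circ T_a\circ f^{-1}$ and $f'\neq 0$ on $[0,1]$. Let $\nu$ be the self-similar measure for $\{T_a\}$ with the same probability vector $(p_a)$. Then $f\nu$ satisfies
\[
f\nu = \sum_a p_a\, f T_a \nu = \sum_a p_a\, S_a (f\nu).
\]
By uniqueness of the invariant measure, $f\nu = \mu$. Non-atomicity of $\mu$ gives non-atomicity of $\nu$, as the standing assumptions require. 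A small point is that $h$ maps $[0,1]$ onto an interval which need not be $[0,1]$. After an affine rescaling, which preserves self-similarity and $C^2$ regularity, we may assume $\nu$ lives on $[0,1]$ as in the theorem.

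Now let $Z=\{x: f''(x)=0\}$. If $\nu(Z)=0$, Theorem~\ref{t:pushisrajchman}\eqref{i:pushrajchman} shows $\mu=f\nu$ is Rajchman.

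Suppose instead $\nu(Z)>0$. All hypotheses of Theorem~\ref{t:pushisrajchman}\eqref{i:measureofzeroset} hold: $f'\neq 0$, and $f\nu$ is self-conformal for $\{f\circ T_a\circ f^{-1}\}$. So there is a word $(a_1,\dots,a_k)$ with
\[
\mu\bigl(\{x : (S_{a_1}\circ\dots\circ S_{a_k})''(x)=0\}\bigr)>0 .
\]
By hypothesis this zero set is finite. Since $\mu$ is non-atomic, every finite set has $\mu$-measure zero, which is a contradiction. Hence $\nu(Z)=0$ always holds, and $\mu$ is Rajchman.

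The main point needing care is the identification $\mu=f\nu$ with compatible probability vectors, including the domain normalisation. Everything else is a direct application of the cited results. The only subtle input is that non-atomicity turns ``finite'' into ``$\mu$-null''.
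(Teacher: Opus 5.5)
Your proposal is correct and follows essentially the same route as the paper. Case (1) goes through the polynomial-decay results of \cite{AHW,BS}; case (2) writes the self-conformal measure as $f\nu$ with $\nu$ self-similar, uses Theorem~\ref{t:pushisrajchman}\eqref{i:measureofzeroset} together with non-atomicity (finite zero sets are null) to force $\nu(Z)=0$, and concludes with Theorem~\ref{t:pushisrajchman}\eqref{i:pushrajchman}. The only differences are cosmetic: you phrase the second case as a contradiction rather than a contrapositive, and you make explicit the domain normalisation that the paper leaves implicit.
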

\begin{proof}
    If $\{S_{a}\}_{a\in \A}$ cannot be conjugated to a $C^{2}$ linear IFS then every self-conformal measure has polynomial Fourier decay by the aforementioned results of~\cite{AHW,BS} and is therefore Rajchman.\footnote{In this case we do not in fact need our assumption on $(S_{a_{1}}\circ \dotsb \circ S_{a_{k}})''$.}
    
    Let us now suppose we are in the latter case and there exists a self-similar IFS $\{T_{a}\}_{a\in \A}$ and a $C^{2}$ diffeomorphism $f$ such that $S_{a} = f\circ T_{a}\circ f^{-1}$ for all $a\in \A$. 
    Since $f$ is a diffeomorphism it automatically has non-vanishing derivative by the inverse function theorem. 
    Let $\nu$ be a self-conformal measure for $\{S_{a}\}_{a\in \A}$, so directly from the invariance relation $\nu = \sum_{a \in \mathcal{A}} p_a S_a \nu$ we see that $\nu = f\mu$ for some self-similar measure $\mu$ for $\{T_{a}\}_{a\in \A}$. 
    It follows from our second derivative assumption and the fact that $\nu$ is non-atomic that 
    \[ \nu(\{ x:(S_{a_{1}}\circ \cdots \circ S_{a_{k}})''(x)=0 \} ) = 0 \qquad \mbox{ for all } (a_{1},\dotsc,a_{k})\in \cup_{n=1}^{\infty}\A^{n}. \] 
    Thus, by Theorem~\ref{t:pushisrajchman}~\eqref{i:measureofzeroset} together with the fact $\nu = f\mu$, we must have that $\mu(\{x:f''(x)=0\}) = 0$. 
    Now applying Theorem~\ref{t:pushisrajchman}~\eqref{i:pushrajchman}, we see that $\nu$ is Rajchman. As $\nu$ was an arbitrary self-conformal measure for $\{S_{a}\}_{a\in \A}$, this completes our proof. 
\end{proof}

If the second case appearing in Corollary~\ref{cor:rajchman} could be extended to cover general linear IFSs instead of just self-similar IFSs, then the nonlinearity condition appearing in this corollary would provide a nice sufficient condition for the Rajchman property for self-conformal measures coming from $C^{2}$ IFSs. 
We emphasise that there are examples of linear $C^2$ IFSs that are not conjugate to a self-similar IFS for any $C^2$ diffeomorphism (see~\cite{AOHS}), and that our proof of Theorem~\ref{t:pushisrajchman}~\eqref{i:pushrajchman} uses a result from~\cite{ACWW25,BB25} which is only stated for pushforwards of self-similar measures. 
Corollary~\ref{cor:rajchman} also demonstrates than if an IFS is conjugate to self-similar and provides a negative answer to Question~\ref{Q:refinedquestion}, then it must exhibit some subtle behaviour. Namely the mechanisms within the IFS that guarantee the Rajchman property cannot be so strong as to guarantee polynomial Fourier decay.  

Our next result shows that despite Corollary~\ref{cor:rajchman} and the evidence it provides suggesting an affirmative answer to Question~\ref{Q:refinedquestion}, the subtle behaviour mentioned above is in fact possible and the answer to Question~\ref{Q:refinedquestion} is negative in a strong sense even for $C^{\infty}$ IFSs. 

\begin{thm}\label{t:slowconformal}
For every function $\phi \colon [0,\infty) \to (0,1]$ such that $\phi(\xi) \to 0$ as $\xi \to \infty$, there exists a Rajchman self-conformal measure $\mu$ supported inside $[0,1]$ and generated by a $C^{\infty}$ IFS $\{S_{a} \colon [0,1]\to [0,1]\}_{a\in \A}$, such that for every $(a_{1},\dotsc,a_{k})\in \cup_{n=1}^{\infty}\A^{n}$ we have $(S_{a_{1}}\circ \cdots \circ S_{a_{k}})''(x)\neq 0$ apart from at most finitely many $x\in [0,1]$, and 
\[ \limsup_{\xi \to \infty} \frac{|\widehat{\mu}(\xi)|}{\phi(\xi)} > 0. \]
\end{thm}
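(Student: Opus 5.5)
Our plan is to realise $\mu$ as a pushforward $f\mu_t$ of one of the self-similar measures $\mu_t$ from Theorem~\ref{t:slowselfsim} (for the IFS $\{x/10,(x+1)/10,(x+t)/10\}$), where $f\colon[0,1]\to[0,1]$ is an increasing $C^\infty$ diffeomorphism with $f''$ vanishing only on a small set. The conformal IFS will be $S_a \coloneqq f\circ T_a\circ f^{-1}$, so $f\mu_t$ is self-conformal by Lemma~\ref{lem:pushforwardconformal}, possibly after iterating. We then have to arrange three things: every composition $S_{a_1}\circ\dotsb\circ S_{a_k}$ has only finitely many inflection points; $f\mu_t$ is Rajchman; and the Fourier transform of $f\mu_t$ inherits the slow decay of $\widehat{\mu_t}$.

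For the Rajchman property and the second-derivative condition, choose $f$ so that $Z=\{f''=0\}$ is finite, for instance a single point $x_0$. Then $\mu_t(Z)=0$ because $\mu_t$ is non-atomic, and Theorem~\ref{t:pushisrajchman}~\eqref{i:pushrajchman} gives that $f\mu_t$ is Rajchman for every $t$. Note that we no longer even need $\mu_t$ itself to be Rajchman.

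For the compositions, write $S_{\mathbf a}=f\circ T_{\mathbf a}\circ f^{-1}$ with $T_{\mathbf a}(x)=r x+b$ affine. Differentiating twice gives
\[ S_{\mathbf a}''(y)=\frac{r^2 f''(T_{\mathbf a}x)-r f'(T_{\mathbf a}x)\,f''(x)/f'(x)}{f'(x)^2},\qquad x=f^{-1}(y). \]
So $S_{\mathbf a}''(y)=0$ exactly when $g(T_{\mathbf a}x)\,r = g(x)$, where $g=f''/f'=(\log f')'$. Taking $f$ real analytic (e.g.\ $f'$ the exponential of a nonconstant polynomial, normalised) makes each such equation analytic in $x$. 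It therefore has finitely many zeros unless it holds identically, and that possibility must be ruled out for all words. If $g$ is a polynomial of degree $d\ge1$, the leading coefficients of $r\,g(rx+b)$ and $g(x)$ are $r^{d+1}c$ and $c$, which differ since $0<r<1$. This handles the condition for all words at once.

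The slow decay is the main obstacle. The natural idea is to choose $f$ affine near a small interval, but that is forbidden by finiteness of $Z$. Instead we exploit the frequencies: $\widehat{\mu_t}$ is large at $\xi_n$, a power of $10$, because $t$ is extremely close to a rational $t'$ with denominator $q$. Hence $\mu_t$ essentially lives near $\frac{1}{q}\mathbb Z$ at scale $10^{-m}$, and $e^{2\pi i\xi_n x}$ is almost constant on these pieces. We would decompose $\mu_t$ into cylinders at a level $m_n$ with $10^{-m_n}\ll \xi_n^{-1/2}$, linearise $f$ on each cylinder, and use self-similarity. Each piece contributes roughly $e^{2\pi i\xi_n f(x_c)}\widehat{\mu_t}(\xi_n f'(x_c)10^{-m_n})$. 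The quadratic error $\xi_n 10^{-2m_n}\sup|f''|$ is small, and the phases $\xi_n f(x_c)$ must not cancel. Rather than control cancellation, the cleaner route is to modify the frequency: apply the sequence-of-frequencies argument of Proposition~\ref{p:fullselfsim} jointly with $f$. That is, we build $t$ by a Baire-category (or explicit Liouville) construction at the same time as choosing the test frequencies $\zeta_n$, so that $|\widehat{f\mu_t}(\zeta_n)|\ge c$ for the rational approximant $t'$ and then by continuity in $t$ also for $t$. For the rational $t'$ we have that $\mu_{t'}$ is not Rajchman-decaying along $10^k$ frequencies, but $f\mu_{t'}$ is Rajchman. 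So instead we pick $\zeta_n$ to exploit the non-decay of $f\mu_{t'}$ at finite scale: since $\widehat{f\mu_{t'}}\to0$ only slowly depending on $t'$, for each $n$ there is $\zeta_n$ beyond any prescribed threshold with $|\widehat{f\mu_{t'}}(\zeta_n)|\ge \phi(\zeta_n)^{1/2}$. Establishing this quantitative lower bound via the linearisation above, uniformly enough to dominate $\phi$, is the step I expect to require the most work.
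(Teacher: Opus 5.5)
There is a fatal gap: once you take $f$ real analytic and non-affine, the slow decay you need is impossible for every choice of $t$. Analytic non-affine images of self-similar measures have polynomial Fourier decay (\cite{ACWW25,BB25}, as the paper recalls). You can see this directly for your $f$ with the tools at hand. Here $f''=f'p'$ has finitely many zeros in $[0,1]$, each of order at most $d=\deg p'$, so $|f''(x)|\gtrsim \dist(x,Z)^d$. Split along an $r$-stopping as in the proof of Theorem~\ref{t:pushisrajchman}\eqref{i:pushrajchman}. The cylinders meeting the $r$-neighbourhood of $Z$ carry mass $\lesssim r^{s_F}$. On every other cylinder $|(f\circ T_{\mathbf a})''|\gtrsim r^{2+d}$, and Theorem~\ref{t:originalbb} gives $|\widehat{(f\circ T_{\mathbf a})\mu_t}(\xi)|\lesssim r^{-C}|\xi|^{-\eta}$. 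Choosing $r=|\xi|^{-\varepsilon}$ yields $|\widehat{f\mu_t}(\xi)|\lesssim|\xi|^{-\eta'}$, so for $\phi(\xi)=(\log\log\xi)^{-1}$ the $\limsup$ in the theorem is $0$.

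The final object is a single measure $f\mu_t$, so no Baire-category or Liouville choice of $t$ can rescue this. Your claim that $\widehat{f\mu_{t'}}$ tends to $0$ only slowly is false. Your own linearisation heuristic already shows the mechanism: the cylinder at $x_c$ sees the frequency $\xi_n f'(x_c)10^{-m_n}$, which moves with $x_c$ and is no longer a power of $10$. This dispersion is exactly what produces decay.

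What is missing is that $f$ must be essentially affine, at the relevant scales, on a set of non-negligible mass, and this forces $f$ to be non-analytic. The paper takes $f(x)=x+\int_0^x\int_0^y g$, where $g$ is $C^\infty$, vanishes to infinite order at $0$, and decays there at a rate tuned to $\phi$. It uses a non-Rajchman base measure: a missing-digit measure with $0$ in the digit set, so $\widehat{\mu}(jb^k)=c_\mu\neq 0$ for all $k$. At frequencies $jb^{k_n}$ with $jb^{k_n}(f(b^{-n})-b^{-n})$ small, the piece $\mu|_{[0,b^{-n}]}$ contributes about $b^{-s_\mu n}c_\mu$. The remaining cylinders are controlled by Theorem~\ref{t:bakerbanaji}, using the lower bound on $g$ away from $0$.

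The price is that your leading-coefficient argument for the finitely-many-zeros condition on all compositions is no longer available. Handling that condition is the technical heart of the paper's proof, done by randomising the coefficients $c_n$ and exploiting piecewise analyticity of $f$ away from the points $r_n$. Your Rajchman argument and your second-derivative computation are correct. However, they solve the easy half of a trade-off whose other half they make impossible.
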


Theorem~\ref{t:slowconformal} also shows that the result following from~\cite{ACWW25,AHW,BB25,BS} that self-conformal measures for analytic IFSs with a non-affine map have polynomial Fourier decay fails dramatically if `analytic' is replaced by `$C^{\infty}$.' 

The proof of Theorem~\ref{t:slowconformal} is one of the most challenging parts of the paper; the strategy can be summarised as follows. 
As we have discussed, under mild conditions pushforwards of self-similar measures are self-conformal measures. The measure $\mu$ in Theorem~\ref{t:slowconformal} will be constructed as the pushforward of a non-Rajchman self-similar measure on $[0,1]$ whose support contains $0$ by a map of the form $x + h(x)$, see Proposition~\ref{p:slowpushforward} for a technical statement. 
Here, $h$ will be $C^{\infty}$ and will vanish along with all its derivatives at $0$, but $h(x) > 0$ for all $x > 0$ (so $h$ is not analytic at $0$). Moreover, $h$ will decay to $0$ extremely rapidly in a way which depends on $\phi$, and in such a way that if $0 < x < y$ then even a relatively small separation between $x$ and $y$ will mean that $h''(x)$ is much smaller than $h''(y)$. 
We then consider the three regions $0 \leq x \leq x_1$, $x_1 < x < x_2$ and $x_2 \leq x$ separately, for some well-chosen $0 < x_1 < x_2$. Since $h$ is very small on $[0,x_1]$ and $\mu$ is a non-Rajchman missing digit measure, there will be some large frequency $\xi$ (depending on $x_1$ and tending rapidly to $\infty$ as $x_1 \to 0^+$ in a way which depends on $\phi$) such that $\widehat{\mu|_{[-x_1,x_1]}}(\xi) \approx \mu([-x_1,x_1])$. 
We ensure $x_1,x_2$ are close enough together that the contribution of the second term is small. We use quantitative Fourier decay results for nonlinear pushforwards of self-similar measures from our previous work~\cite{BB25}, together with the rapid decay of $h''$, to show that the contribution of the third term is small. 
Finally, we have to prove that we can choose $h$ in a way that the conformal maps generating $\mu$ satisfy the desired nonlinearity condition, which we do by randomising the construction of $h$ and showing that a typical choice works.

As in the self-similar case with Theorem~\ref{t:explicitselfsim}, we can give an explicit example of a self-conformal measure whose Fourier transform decays at least as slowly as $(\log \log \xi)^{-1}$.
\begin{thm}\label{t:particularfunctionconf}
    Let $f \colon [0,1]\to \mathbb{R}$ be given by $f(x) = x + \exp(-\exp(x^{-2}))$ for $x \neq 0$ (and $f(0) = 0$), and let $\mu$ be the Cantor--Lebesgue measure for the IFS $\{T_{1}(x)=x/3,T_{2}(x)=(x+2)/3\}$ and the $(1/2,1/2)$ probability vector. Then $f \mu$ is a Rajchman self-conformal measure for an IFS of $C^{\infty}$ contractions $\{S_a \colon [0,f(1)] \to [0,f(1)]\}_{a \in \{1,2\}}$ such that for all $(a_1,\dotsc,a_k) \in \cup_{n=1}^{\infty} \{1,2\}^n$ we have $(S_{a_{1}}\circ \dotsb \circ S_{a_{k}})''(y) \neq 0$ apart from at at most finitely many $y \in [0,f(1)]$. Moreover, 
    \[ \limsup_{\xi \to \infty} ((\log \log \xi) |\widehat{f\mu}(\xi)|) > 0. \]
\end{thm}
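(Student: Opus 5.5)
The proof splits into three parts: the self-conformal structure, the nonlinearity and Rajchman properties, and the slow decay along a sequence of frequencies.

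\emph{Self-conformal structure.} Write $h(x) = \exp(-\exp(x^{-2}))$ for $x>0$ and $h(0)=0$, so that $f = \mathrm{id} + h$. The function $h$ is $C^\infty$ on $[0,1]$ and flat at $0$. Moreover $h'(x) = 2x^{-3}e^{x^{-2}}h(x) > 0$ for $x>0$, so $f' \geq 1$ and $f$ is a $C^\infty$ diffeomorphism onto $[0,f(1)]$. Lemma~\ref{lem:pushforwardconformal} then gives that $f\mu$ is self-conformal for $S_a = f\circ T_a\circ f^{-1}$; here no iteration is needed, since $S_a$ maps $[0,f(1)]$ into itself, and $S_a$ is a contraction because $f'$ is close to $1$ and $T_a' = 1/3$.

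\emph{Nonlinearity and the Rajchman property.} Set $T = T_{a_1}\circ\dotsb\circ T_{a_k}$, an affine map $T(x) = 3^{-k}x + c$. Then $S = f\circ T\circ f^{-1}$, and $S''(y)=0$ at $y = f(x)$ is equivalent to
\[ 3^{-k} f''(T x) f'(x)^2 = f'(Tx)\, f''(x). \]
I would use analyticity here. On $(0,1]$ both sides are real analytic, indeed restrictions of holomorphic functions on a complex neighbourhood of $(0,1]$. So the zero set is either discrete in $(0,1]$ or the identity holds on all of $(0,1]$. The identity cannot hold identically, by comparing asymptotics as $x\to 0^+$. If $c>0$, then $f''(Tx)$ stays bounded away from zero while $f''(x)$ is superexponentially small, so the left side dominates. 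If $c=0$, then $T x = 3^{-k}x$, and $h''(3^{-k}x) \ll h''(x)$ by a huge factor, so the right side dominates. Accumulation at $0$ is ruled out by the same asymptotics, which show that one side strictly dominates near $0$; hence the zero set is finite. Since $f'' = h''$ vanishes only at $0$ and finitely many other points (again by analyticity on $(0,1]$ and the sign analysis near $0$), $\mu(Z)=0$ because $\mu$ is non-atomic. Theorem~\ref{t:pushisrajchman}\eqref{i:pushrajchman} then gives that $f\mu$ is Rajchman.

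\emph{Slow decay.} Take $\xi_n = 3^n$ and split $[0,1]$ into $[0,x_1]$, $(x_1,x_2)$ and $[x_2,1]$, with $x_1<x_2$ depending on $n$.

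1. On $[0,x_1]$, use $|e^{2\pi i\xi f(x)} - e^{2\pi i\xi x}| \leq 2\pi\xi h(x_1)$. Because $3^n x \in \Z + \{0,2/3\}$-type digits, $\widehat{\mu|_{[0,3^{-m}]}}(3^n)$ equals $2^{-m}\widehat{\mu}(3^{n-m})$, and $|\widehat\mu(3^j)| = |\widehat\mu(1)| =: c_0 > 0$. Hence
\[ \bigl|\widehat{f\mu|_{[0,x_1]}}(\xi)\bigr| \geq c_0\, 2^{-m} - 2\pi\xi h(3^{-m}). \]
2. Choose $m$ so that $\xi h(3^{-m})$ is tiny. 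This needs $\exp(3^{2m}) \gtrsim \log \xi \approx n$, so $3^{2m} \approx \log n$ and $2^{-m} \approx (\log n)^{-\log 2/(2\log 3)}$. This is much larger than $(\log\log\xi)^{-1} \approx (\log n)^{-1}$.
3. The remaining contributions, over $(3^{-m},1]$, must be $o(2^{-m})$. On $[x_2,1]$ I would apply the quantitative decay of nonlinear pushforwards from~\cite{BB25}, which gives a bound polynomial in $\xi$ times a power of $1/\inf_{[x_2,1]}|f''|$. Since $h''(x_2) \approx \exp(-\exp(x_2^{-2}))$, this is small provided $\exp(x_2^{-2}) \ll \log \xi$, i.e.\ $x_2 \gtrsim (\log\log\xi)^{-1/2}$.
4. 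The middle region $(x_1,x_2)$ has $\mu$-mass roughly $x_2^{\log2/\log3}$, which is not small relative to $2^{-m}$. I would therefore not bound it by its mass. Instead I would cover it by cylinders $T_{\mathbf a}([0,1])$ of scale $3^{-j}$ and treat each by the same trichotomy rescaled. Alternatively, I would choose the frequency to be $3^n$ multiplied by a factor that kills cancellation, so that the contributions of those cylinders either align with the main term or are controlled.

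The main obstacle is this middle region. The simplest route is to accept the weaker target $\limsup (\log\log\xi)|\widehat{f\mu}(\xi)|>0$ and take $x_1,x_2$ adjacent at scale $3^{-m}$ with $3^{2m}\approx \log\log \xi$. Then the main term is $2^{-m}$, which is far larger than $(\log\log\xi)^{-1}$; the ratio $x_2/x_1$ is bounded; and the middle contribution is bounded by $\mu$ of a set of size comparable to $x_1$ minus the aligned part. I would first try to show that on $[x_1, Cx_1]$ the phase $\xi h$ is still $\ll 1$ when $m$ is chosen at the threshold, so the whole interval behaves linearly.
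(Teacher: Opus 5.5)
Your first two parts are essentially the paper's argument (Lemma~\ref{lem:particularzeroderiv} plus Theorem~\ref{t:pushisrajchman}\eqref{i:pushrajchman}), with two small slips. The vanishing condition should read $3^{-k}f''(Tx)f'(x)=f'(Tx)f''(x)$. In the case $c>0$ you also need $f''(c)\neq 0$, which holds because $h''(x)=2u^2e^u(2ue^u-2u-3)h(x)>0$ for $u=x^{-2}\geq 1$.

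The genuine gap is the intermediate region in the slow-decay part. You leave it open, and the remedy you propose at the end fails. If $\xi h(x_1)\approx 1$, i.e.\ $\log\xi\approx\exp(x_1^{-2})$, then for every fixed $C>1$ one has $\xi h(Cx_1)\approx\exp(\exp(x_1^{-2})-\exp(C^{-2}x_1^{-2}))\to\infty$. So the phase is enormous, not small, on $[x_1,Cx_1]$. Your other suggestions (rescaled trichotomy on cylinders, modified frequencies) are not carried out.

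What you are missing is that for the Cantor--Lebesgue measure this region costs nothing, and your own Step~3 already allows $x_2$ at the same scale as $x_1$. Take $x_1=3^{-m}$ and $x_2=2\cdot 3^{-m}$; then $\mu((x_1,x_2))=0$ since $\supp(\mu)\cap(3^{-m},2\cdot 3^{-m})=\varnothing$.

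Tie the frequency to the scale, as the paper does: let $\xi=3^{k_m}$ with $k_m$ the largest integer such that $3^{k_m}h(3^{-m})\leq 0.01$. Then $\exp(9^m)\leq 2\log\xi$ and $9^m\approx\log\log\xi$. Because of the double exponential,
$1/h''(2\cdot 3^{-m})\leq 1/h(2\cdot 3^{-m})=\exp(\exp(9^m/4))\leq\exp((2\log\xi)^{1/4})=\xi^{o(1)}$
(this is the $c=1/2$ case of \eqref{e:secondderivspecific}).

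Now write $\widehat{f\mu}(\xi)=\sum_{\mathbf a\in\{1,2\}^m}2^{-m}\widehat{(f\circ T_{\mathbf a})\mu}(\xi)$. The term $\mathbf a=1^m$ is your main term, of size $\gtrsim 2^{-m}$. Every other cylinder $T_{\mathbf a}([0,1])$ lies in $[2\cdot 3^{-m},1]$, so $(f\circ T_{\mathbf a})''\geq 9^{-m}h''(2\cdot 3^{-m})$ for large $m$, since $h''$ is positive on $(0,1]$ and increasing near $0$. Theorem~\ref{t:bakerbanaji}, applied to the family $\{f\circ T_{\mathbf a}\}$, then bounds these terms in total by $\lesssim 9^m\xi^{-\eta+o(1)}=o(2^{-m})$. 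This gives $|\widehat{f\mu}(\xi)|\gtrsim 2^{-m}\approx(\log\log\xi)^{-\log 2/\log 9}\gg(\log\log\xi)^{-1}$.

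Note that your Step~2 ordering (fix $\xi=3^n$, then take $m$ minimal) only gives $\log\xi\gtrsim\exp(9^{m-1})$. That does not ensure $\exp(9^m/4)\ll\log\xi$, which is another reason to choose the frequency from the scale.
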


We also have the following result whose proof is somewhat similar to the proof of Theorem~\ref{t:slowconformal}, though not quite as technical. 
This theorem shows that there exist nonlinear pushforwards of self-similar measures which are Rajchman but have slow Fourier decay along \emph{every} subsequence. 
Crucially, though, in this theorem the pushforward measures can no longer be realised as self-conformal measures, and they are zero-dimensional. 
\begin{thm}\label{t:slowzerodim}
For every function $\phi \colon [0,\infty) \to (0,1]$ such that $\phi(\xi) \to 0$ as $|\xi| \to \infty$, there exists a strictly increasing $C^{\infty}$ function $f \colon \mathbb{R} \to \mathbb{R}$ such that the following holds: if $\mu$ is any self-similar measure on $\mathbb{R}$ whose support contains $0$, then $f \mu$ is Rajchman but 
\begin{equation}\label{e:zerodimliminf}
    \liminf_{|\xi| \to \infty} \frac{|\widehat{f \mu}(\xi)|}{\phi(|\xi|)} > 0.
\end{equation}
\end{thm}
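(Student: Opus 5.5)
We will take $f(x) = x + h(x)$, where $h$ is $C^\infty$, odd or at least vanishing together with all derivatives at $0$. We choose $h$ so that $h''(x) > 0$ for $x > 0$ but $h$ is extraordinarily flat near $0$, with a flatness rate dictated by $\phi$. Then $f' \geq 1$ wherever $h' \geq 0$, and we can arrange $f$ to be strictly increasing on $\R$. The key feature, as opposed to Theorem~\ref{t:slowconformal}, is that $f$ must work for \emph{every} self-similar $\mu$ with $0 \in \supp \mu$ and for \emph{every} large frequency. So the slow decay should come not from arithmetic of $\mu$ but from the local mass of $\mu$ near $0$.

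\textbf{Rajchman property.} Since $h''$ vanishes only at $0$ (we choose $h''\neq 0$ off $0$) and $\mu$ is non-atomic, $\mu(\{f''=0\}) = 0$. Localizing by a partition of unity to compact pieces, Theorem~\ref{t:pushisrajchman}~\eqref{i:pushrajchman} (or its proof via~\cite{ACWW25,BB25}) gives that $f\mu$ is Rajchman. This step is routine.

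\textbf{Lower bound.} The idea is that on a tiny neighbourhood $[-r,r]$ of $0$, $f$ is essentially the identity, while $f$ maps that neighbourhood into an interval of length $\approx 2r$. We use instead a scale-dependent form of smoothing. For a frequency $\xi$, set $r = r(\xi)$ small and split
\[ \widehat{f\mu}(\xi) = \int_{|x|\leq r} e^{2\pi i \xi f(x)}\,d\mu + \int_{|x|>r} e^{2\pi i \xi f(x)}\,d\mu. \]
This alone does not give a lower bound, since $\widehat{\mu}$ may oscillate. The genuinely zero-dimensional mechanism must be built into $f$: we let $f$ be nearly \emph{constant} (rather than nearly the identity) on a sequence of annuli accumulating at $0$. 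Concretely, choose $f$ strictly increasing with $f'$ extremely small on $[-r_k,r_k]\setminus[-r_{k+1},r_{k+1}]$, so that $f([-r_k,r_k])$ has length $\ell_k$ with $\ell_k$ super-rapidly decaying. On the other hand, $f''\neq 0$ away from $0$ is kept, and $f$ is $C^\infty$ because all derivatives tend to $0$ at $0$ if $\ell_k$ decays fast enough relative to $r_k$. This makes $f\mu$ zero-dimensional. Then for $|\xi|\in[\ell_{k}^{-1}/100, \ell_{k+1}^{-1}/100]$, the mass $\mu([-r_{k+1},r_{k+1}])\geq c\, r_{k+1}^{s}$ (using a lower bound on masses of balls around $0\in\supp\mu$ for self-similar measures, polynomial in $r$, uniform as $\mu$ varies only up to constants depending on $\mu$) contributes phase nearly constant. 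The remaining mass outside $[-r_{k+1},r_{k+1}]$ is pushed to where $f$ is strongly nonlinear at scale $\gg \xi^{-1}$, and its Fourier contribution should be bounded using the quantitative decay of~\cite{BB25} for nonlinear pushforwards. Choosing $r_{k+1}$ so that $r_{k+1}^{s_0}\gg \phi(\ell_{k+1}^{-1}/100)$ for every $s_0$ (e.g.\ $r_{k+1}^{k}\geq \phi(\cdot)^{1/2}$, then $\ell_{k+1}$ tiny enough), we get $|\widehat{f\mu}(\xi)|\gtrsim r_{k+1}^{s}\gg\phi(|\xi|)$ in the entire window, and consecutive windows cover all large $|\xi|$. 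This gives \eqref{e:zerodimliminf} with $\liminf = \infty$.

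\textbf{Main obstacle.} The main obstacle is controlling the contribution from mass outside $[-r_{k+1},r_{k+1}]$ uniformly over all $\xi$ in the window and all self-similar $\mu$. The difficulty is that this mass also sits on nearly-flat pieces of $f$ and could cancel the main term. I would first try to design the annuli so that $f\mu$ restricted to the $j$-th annulus for $j\le k$ is a nearly affine image compressed to length $\ell_j$, and handle it via the fact that only annuli with $\ell_j\xi\ll1$ give near-constant phase $e^{2\pi i\xi f(0)}$, which adds constructively. The problematic annulus $j=k$ (where $\ell_k\xi$ is of order one) would be tamed by making its $\mu$-mass negligible relative to $\mu([-r_{k+1},r_{k+1}])$. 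That is impossible directly, so instead I would ensure the annuli with $\ell_j\xi \gtrsim 1$ have strong nonlinearity ($f''$ large relative to $\xi^{-1/2}$ scale) and apply~\cite{BB25}-type decay with explicit constants.
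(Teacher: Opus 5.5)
Your final architecture matches the paper's. You take an $f$ that is flat at $0$ with no linear term; the paper uses $f''=g=\sum_n c_nW(x-r_{n+1})$ and $f=\int_0^x\int_0^y g$. You get near-constant phase on $[-\rho,\rho]$, which carries mass $\gtrsim\rho^{s_M}$. Rajchman follows from Theorem~\ref{t:pushisrajchman}~\eqref{i:pushrajchman}, and Theorem~\ref{t:bakerbanaji} handles the remaining mass. But the step you call the main obstacle is the actual content of the proof, and you leave it open. Neither remedy you suggest works.

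Making $f$ nearly affine on the outer annuli is counterproductive. An affine image of a piece of a non-Rajchman $\mu$ (e.g.\ Cantor--Lebesgue) has Fourier transform comparable to its mass at arbitrarily large frequencies; this is exactly the obstruction in Example~\ref{Example:self-similar image}. Nearly affine also means $f''$ is tiny, which is the worst case for Theorem~\ref{t:bakerbanaji}.

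Asking for $f''$ large relative to $|\xi|^{-1/2}$ is too weak. On pieces of size $\delta$, Theorem~\ref{t:bakerbanaji} only gives $\lesssim\delta^{-2}(\min|f''|)^{-1}|\xi|^{-\eta}$. Here $\eta>0$ depends on $\mu$ and may be arbitrarily small, while one $f$ must serve every self-similar $\mu$. So outside a set of negligible mass you need $\min|f''|\geq|\xi|^{-o(1)}$ and $\delta^{-1}\le|\xi|^{o(1)}$.

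The missing idea is a mechanism reconciling this with $|\xi|f(\rho)\le 0.01$, with a cut-off $\rho$ adapted to $\xi$. In your setup there is one coarse annulus $[r_{k+1},r_k]$ per window, and the cut-off is frozen at $r_{k+1}$ while $\xi$ runs over $[\ell_k^{-1}/100,\ell_{k+1}^{-1}/100]$. That annulus in general does not have negligible mass compared with $\mu([-r_{k+1},r_{k+1}])$, as you note. You also give no control of $f''$ on it relative to $\xi$.

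The paper builds this mechanism into $f$ as follows.
\begin{itemize}
\item The radii are spaced extremely densely, $r_{n+1}\geq r_n-e^{-1/r_n}$. Hence $r_{n-2}-r_n\le r_n^{4s_M/s_F}$ for every $\mu$, and the shell $r_n<|x|<r_n+2r_n^{4s_M/s_F}$ has mass $\lesssim r_n^{4s_M}$ by the Frostman bound.
\item The $c_n$ are chosen so that $g(r_{n+1})\le 0.01\,g(r_n)^{1/r_n}$. Then $|\xi|f(r_{n-1})>0.01$ and $f\le g$ give $g(r_{n-1})\gtrsim|\xi|^{-1}$, which upgrades to $g(r_{n-2})\gtrsim|\xi|^{-r_{n-2}}=|\xi|^{-o(1)}$ just beyond that thin shell.
\item The index $n$ is chosen adaptively from the frequency, via $k_{n-1}<m\le k_n$.
\item The relation $r=\psi^{1/\log\log(1/\psi)}$ makes $r^{-O(1)}=|\xi|^{o(1)}$ while still $r^{s_M}\gg\psi(\xi)$ for every $s_M$.
\end{itemize}

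A smaller, fixable point: on your window $[\ell_k^{-1}/100,\ell_{k+1}^{-1}/100]$, the bound $r_{k+1}^{s}$ has to beat $\phi$ at the bottom of the window, after passing to a monotone majorant as in Lemma~\ref{lem:replacebymonotone}. So $\ell_k$ must be chosen small in terms of a pre-fixed $r_{k+1}$, not $\ell_{k+1}$ in terms of $r_{k+1}$.
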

Note that in Theorem~\ref{t:slowzerodim}, $f$ depends on $\phi$ but not on the measure $\mu$. In this context we can also construct an explicit example of a pushforward whose Fourier transform decays at least as slowly as $(\log \log \xi)^{-1}$, using a very similar function to the one from Theorem~\ref{t:particularfunctionconf}.

\begin{thm}\label{t:particularfunctionzero}
    Let $f \colon [0,1]\to \R$ be given by $f(x) = \exp(-\exp(x^{-2}))$ for $x \neq 0$ (and $f(0) = 0$), and let $\mu$ be a self-similar measure whose support contains $0$. Then $f\mu$ is Rajchman but 
    \[ \liminf_{|\xi| \to \infty} ((\log \log |\xi|) |\widehat{f\mu}(\xi)|) > 0. \] 
\end{thm}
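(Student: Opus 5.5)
The plan has two parts: the Rajchman property, which is soft, and the lower bound, which is a three-region decomposition of the Fourier integral as in the sketch of Theorem~\ref{t:slowconformal}. Throughout I assume $\mu$ is supported in $[0,1]$ (after an affine change, which only rescales frequencies), and I use the same $f$ on both sides of $0$ if needed.

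\textbf{Rajchman.} The function $f(x)=\exp(-\exp(x^{-2}))$ is $C^\infty$ on $[0,1]$, and all its derivatives vanish at $0$. On $(0,1]$, $f''$ is a real-analytic function that is not identically zero, so its zero set $Z$ can accumulate only at $0$. A direct computation of $f''$ in terms of $u=x^{-2}$ should show that $Z\cap(0,1]$ is in fact finite. Hence $Z$ is at most countable, and $\mu(Z)=0$ because $\mu$ is non-atomic. Theorem~\ref{t:pushisrajchman}~\eqref{i:pushrajchman} then gives that $f\mu$ is Rajchman.

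\textbf{Lower bound: choice of scales.} Fix a large $|\xi|$ and a small $\varepsilon>0$. Let $\delta=\delta(\xi)$ be defined by $|\xi|f(\delta)=\varepsilon$, so that
\[ \delta^{-2}=\log\log(|\xi|/\varepsilon)\approx \log\log|\xi| . \]
Because $f$ is so flat, raising $x$ from $\delta$ to $\delta+w$ multiplies $f$ by about $\exp(2w\delta^{-3}e^{\delta^{-2}})$. Taking $w=K\delta^{3}$ with $K$ a large constant, the quantities $|\xi|f$, $|\xi|f'$ and $|\xi|f''$ on $[\delta+w,1]$ are then at least a positive power of $|\xi|$.

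\textbf{Lower bound: decomposition.} Split $\widehat{f\mu}(\xi)$ into integrals over $I_1=[0,x_1]$, $I_2=(x_1,x_1+w)$ and $I_3=[x_1+w,1]$, with $x_1\in[\delta/2,\delta]$ still to be chosen.
\begin{enumerate}
\item On $I_1$ the phase $2\pi\xi f(x)$ lies in $[0,2\pi\varepsilon]$. So the real part of the $I_1$ integral is at least $(1-O(\varepsilon))\mu(I_1)$.
\item On $I_3$ the phase has second derivative of size at least $|\xi|^{c}$. The quantitative Fourier decay bound for nonlinear pushforwards of self-similar measures from \cite{BB25}, with explicit dependence on the $C^2$ data of the phase restricted to an interval where $f''$ is bounded away from $0$, should give an $I_3$ contribution of $O(|\xi|^{-c'})$. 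The main check here is that the constants in \cite{BB25} depend only polynomially on lower bounds for $|f''|$ and upper bounds for $\|f\|_{C^2}$ on $I_3$.
\item To control $I_2$, choose $x_1$ by pigeonholing. Cut $[\delta/2,\delta]$ into about $\delta^{-2}/(2K)$ consecutive intervals of length $w$. If every such interval $J$ had $\mu(J)>\varepsilon\,\mu([0,\min J])$, then $\mu([0,\cdot])$ would grow by a factor $(1+\varepsilon)$ across each one. This gives
\[ 1\geq(1+\varepsilon)^{c\delta^{-2}}\mu([0,\delta/2]) . \]
Since $0\in\supp\mu$ and $\mu$ is self-similar, $\mu([0,\delta/2])\geq c\,\delta^{s}$ for some $s$, and the displayed inequality is false once $\delta$ is small. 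So some $J=(x_1,x_1+w)$ satisfies $\mu(J)\leq\varepsilon\,\mu([0,x_1])$.
\end{enumerate}
Combining the three bounds gives
\[ |\widehat{f\mu}(\xi)|\geq (1-O(\varepsilon))\,\mu([0,\delta/2])-O(|\xi|^{-c'}) . \]

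\textbf{Main obstacle.} The decomposition reduces everything to a lower bound $\mu([0,\delta/2])\gtrsim\delta^{2}$, i.e. $\gtrsim(\log\log|\xi|)^{-1}$. The generic estimate only gives $\mu([0,\delta/2])\gtrsim\delta^{s}$ with $s=\log p_{\min}/\log r_{\max}$, and $s$ may exceed $2$. So this is the step I expect to be hardest, and as stated I do not yet see how to close it. What I would try first is to use the extra room in the pushforward. The argument only needs $|\xi|f\leq\varepsilon$ on $I_1$ and $|\xi|f''\geq|\xi|^{c}$ beyond $I_2$, and both survive replacing $\delta$ by a slightly larger point $\delta^{\theta}$ provided $\varepsilon$ is allowed to tend to $0$ only slowly. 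I would then track exactly how $\log\log$ enters in order to obtain the exponent $1$. If that fails, I would check whether the intended statement has the constant depending on $\mu$ through a power of $\log\log|\xi|$.
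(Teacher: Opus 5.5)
Your three-region plan is the argument the paper intends. The paper omits the proof and points to Theorems~\ref{t:slowzerodim} and~\ref{t:particularfunctionconf}. Your Rajchman argument is fine; in fact $f''>0$ on $(0,1]$. But the obstacle you isolate cannot be overcome: the same tools give a matching \emph{upper} bound, and the statement fails for general $\mu$.

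Take the IFS $\{x/2,(x+1)/2\}$ with weights $(2^{-10},1-2^{-10})$. Set $\delta^{-2}=\log\log|\xi|$, pick $m$ with $2^{-m}\in(2\delta,4\delta]$, and write $\mu=\sum_{\mathbf{a}\in\{0,1\}^m}p_{\mathbf{a}}T_{\mathbf{a}}\mu$. Every cylinder other than $[0,2^{-m}]$ lies in $[2\delta,1]$, where $f''\geq f(2\delta)=\exp(-(\log|\xi|)^{1/4})=|\xi|^{-o(1)}$. So Theorem~\ref{t:bakerbanaji}, applied to the family $\{f\circ T_{\mathbf{a}}\}$, bounds their total contribution by $2^{2m}|\xi|^{o(1)-\eta}$. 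The cylinder $[0,2^{-m}]$ has mass $2^{-10m}\leq(4\delta)^{10}$. Hence $|\widehat{f\mu}(\xi)|\lesssim(\log\log|\xi|)^{-5}$, so $(\log\log|\xi|)\,|\widehat{f\mu}(\xi)|\to 0$.

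More generally, $|\widehat{f\mu}(\xi)|\leq\mu([0,3\delta])+O(|\xi|^{-\eta/2})$. So the Fourier transform is governed by $\mu([0,\delta])$, which is only guaranteed to be $\gtrsim\delta^{s_M}=(\log\log|\xi|)^{-s_M/2}$. The stated rate therefore needs something like $\mu([0,r])\gtrsim r^{2}$. This holds for the Cantor--Lebesgue measure; it is the comparison $2^{-n}>9^{-n}$ in the sketch of Theorem~\ref{t:particularfunctionconf}. In Theorem~\ref{t:slowzerodim} the issue was avoided because \eqref{e:relaterandxi} makes $r^{s_M}\gtrsim\psi$ for every fixed $s_M$. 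Here, by contrast, $\delta^{2}\asymp(\log\log|\xi|)^{-1}$ is a fixed polynomial relation. So the statement needs an extra hypothesis on $\mu$ (or the weaker rate $(\log\log|\xi|)^{-s_M/2}$), and the paper's omitted proof has the same hole. Your $\delta^{\theta}$ idea cannot help: $\delta^{\theta}>\delta$, and on $(\delta,\delta^{\theta}]$ the phase $|\xi|f$ climbs to $|\xi|^{1-o(1)}$, which destroys the $I_1$ estimate.

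Separately, the pigeonhole step is incompatible with step (2). The window where neither estimate applies is pinned by $\xi$ at $(\delta,\delta+K\delta^{3})$. If $x_1<\delta$, then $I_3=[x_1+w,1]$ contains $[x_1+w,\delta]$, where $|\xi|f\leq\varepsilon$, so the integral over that piece is essentially its mass, not $O(|\xi|^{-c'})$. Pigeonholing over $[\delta/2,\delta]$ only finds light windows where the phase is already negligible. Since the claim concerns every large $|\xi|$, you cannot move $\xi$ instead. In effect $x_1=\delta$ is forced, and you need $\mu((\delta,\delta+K\delta^{3}))\leq\varepsilon\,\mu([0,\delta])$ at that exact location.

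One way to get this is to decompose along a $\delta$-stopping. In each cylinder meeting the window, the pulled-back window has length $\lesssim\delta^{2}$, so Frostman gives relative mass $\lesssim\delta^{2s_F}$. The weights of those cylinders sum to at most $\mu([0,3\delta])$, and this is $\lesssim\mu([0,\delta])$ when $0$ is fixed by an orientation-preserving map of the IFS. Alternatively, use Frostman directly when $3s_F>s_M$, as in Theorem~\ref{t:slowzerodim}.

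Three smaller points:
\begin{itemize}
\item The criterion actually needed in (2) is $\min_{I_3}f''\geq|\xi|^{-\eta/2}$, with $\eta$ from Theorem~\ref{t:bakerbanaji}. This forces $e^{-2K}<\eta/2$; a bound $|\xi|f''\geq|\xi|^{c}$ alone is not enough.
\item Localising to $I_3$ must go through cylinders of scale $\lesssim\delta^{3}$. Their $\rho^{-2}$ loss is only a power of $\log\log|\xi|$, so this is harmless, but it should be said.
\item An affine change of $\mu$ does not merely rescale frequencies, since $f$ is not affine. Just take $\supp\mu\subset[0,1]$, as the domain of $f$ requires.
\end{itemize}
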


\begin{rem}
    Since self-similar and self-conformal measures have positive Frostman exponent (see  \cite[Proposition~2.2]{FL} and note that the proof generalises to self-conformal measures), there are various ways to see that we cannot hope to replace the $\limsup$ by a $\liminf$ in Theorem~\ref{t:slowconformal}. 
    For instance, the Frostman exponent (or $L^\infty$ dimension) is a lower bound for lower correlation dimension (or $L^2$ dimension), which is in turn described by the growth rate of $\int_{-R}^R |\widehat{\mu}(\xi)|^2 d\xi$ as $R \to \infty$, see \cite[Lemma~2.5]{FNW}. 
    The conclusions of Theorems~\ref{t:slowzerodim} and~\ref{t:particularfunctionzero}, however, do hold with $\liminf$; this is possible because the pushforward measures appearing in these statements have vanishing correlation dimension. 
\end{rem}

\begin{rem}
    Theorems~\ref{t:particularfunctionconf} and~\ref{t:particularfunctionzero} just use a particular example of the pushforward function $f$; one could modify them while staying in the $C^{\infty}$ class, and achieve decay that is considerably slower than $(\log \log |\xi|)^{-1}$. 
\end{rem}

\subsection*{Higher dimensions}
It is natural to ask about Fourier decay properties of self-similar and self-conformal measures in higher dimensions. 
\begin{rem}
    \begin{itemize}
        \item Theorem~\ref{t:slowselfsim} holds in higher dimensions, see Theorem~\ref{t:selfsimhigher} below. 
        Moreover, by the proof of Theorem~\ref{t:selfsimhigher}, one can obtain explicit examples of Rajchman homogeneous self-similar measures on $\R^d$ with very slow Fourier decay by taking the $d$-fold product of the measure from Theorem~\ref{t:explicitselfsim} (or any one of the measures from Remark~\ref{r:moreexplicitexamples}).
        \item For $d \geq 2$, since conformal maps on $\R^d$ are analytic, it seems plausible that every self-conformal measure in $\R^d$ which is not self-similar and gives zero mass to every proper submanifold of $\R^d$ should have polynomial Fourier decay. 
        Evidence for this is provided in~\cite{BKS,AHWplane,BYqualitative}. 
        Therefore there is not much hope for an analogue of Theorem~\ref{t:slowconformal} for self-conformal measures in higher dimensions. 
    \end{itemize}
\end{rem}

\subsection*{Structure of paper}

Section~\ref{sec:proofselfsim} relates to self-similar measures. 
In Section~\ref{ss:self-sim-main} we prove Theorem~\ref{t:slowselfsim} as a consequence of the stronger result Theorem~\ref{e:higherdimft} which holds in higher dimensions and for a topologically generic set of $t$. 
Section~\ref{ss:self-sim-explicit} relates to explicit self-similar measures, and we prove Theorem~\ref{t:explicitselfsim}. 
In Section~\ref{ss:slowequi} we prove Theorem~\ref{thm:slowequi}. 

Section~\ref{sec:overallproofnonlinear} relates to nonlinear fractal measures. 
We begin with preliminaries in Section~\ref{sec:prelim} before moving on to the proof of Theorem~\ref{t:pushisrajchman} in Section~\ref{sec:proofpushisrajchman}. 
Section~\ref{ss:provezerodim} contains the proof of Theorem~\ref{t:slowzerodim}, while Section~\ref{ss:proof-slow-conformal} contains the more technical proof of Theorem~\ref{t:slowconformal} as well as an auxiliary result (Proposition~\ref{p:slowpushforward}) on pushforwards of missing digit measures. 
Finally, in Section~\ref{ss:Examples} we prove Theorem~\ref{t:particularfunctionconf} which gives the explicit example of a self-conformal measure with slow Fourier decay, and we justify the remaining claims from Example~\ref{Example:self-similar image}.

\subsection*{Notation} We write $A \lesssim B$ to mean $A \leq CB$ for some uniform constant $C$. We also use the standard $f = O(g)$ notation to mean $f \leq Cg$ for some uniform constant $C$, and $f = o(g)$ to mean $f/g \to 0$. 
The class of twice continuously differentiable functions on $\R$ (or some subinterval of $\R$ depending on context) will be denoted $C^2$, while the class of infinitely differentiable functions is $C^{\infty}$. 
We sometimes use Knuth's up-arrow notation $\uparrow \uparrow$, see~\eqref{e:knuth}.

\section{Proofs of self-similar theorems}\label{sec:proofselfsim}

Consider the parameterised family of IFSs 
\[ \left\{S_{0}(x)=\frac{x}{10},\, S_{1}(x)=\frac{x+1}{10},\, S_{2}(x)=\frac{x+t}{10}\right\}, \] 
where $t\in [0,1]$. 
Throughout this section we let $\mu_{t}$ be the self-similar measure associated to this IFS and the uniform probability vector $(1/3,1/3,1/3)$. 

\subsection{Main proofs}\label{ss:self-sim-main}
Recall that a $G_{\delta}$ set is a countable intersection of open sets. 
Most of the work in this section is dedicated to proving the following result. 

\begin{prop}
	\label{prop:Main}
	Let $\phi \colon [0,\infty)\to (0,1]$ satisfy $\lim_{\xi\to\infty}\phi(\xi)=0$. Then there exists a dense $G_{\delta}$ set $A\subset [0,1]$ such that for all $t\in A$ we have \[\limsup_{\xi\to\infty}\frac{|\widehat{\mu_{t}}(\xi)|}{\phi(\xi)}>0.\]
\end{prop}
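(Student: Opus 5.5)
Since $\phi(\xi)\to 0$, it suffices to produce a dense $G_\delta$ set $A$ such that for every $t\in A$ there are frequencies $\xi_n\to\infty$ along which $|\widehat{\mu_t}(\xi_n)|$ is bounded below by a fixed positive constant times $\phi(\xi_n)$. In fact we aim for $|\widehat{\mu_t}(\xi)|\ge 1/2$ at some arbitrarily large $\xi$ with $\phi(\xi)\le 1$, which gives a lim sup of at least $1/2$. The mechanism is the one described in the introduction: approximate $t$ by rationals.

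\textbf{Rational parameters.} Fix a rational $t'=p/q$. We have
\[
\widehat{\mu_{t}}(\xi)=\prod_{j=1}^{\infty}\frac{1+e^{2\pi i \xi 10^{-j}}+e^{2\pi i \xi t 10^{-j}}}{3}.
\]
Take frequencies $\xi=q\,10^{N}$. For $j\le N$, both $\xi 10^{-j}$ and $\xi t' 10^{-j}$ are integers, so those factors equal $1$. The remaining factors equal $\widehat{\mu_{t'}}(q)$, which is independent of $N$. This quantity is nonzero for a suitable choice of rational. Choosing $q$ as an appropriate multiple, or simply choosing $t'$ from a dense set such as decimals $p/10^m$, one gets $\widehat{\mu_{t'}}(10^N)\to c_{t'}\ne 0$. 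Indeed, for $t'=p/10^m$ and $\xi=10^N$, the tail product is $\prod_{j\ge1}\frac{1+e^{2\pi i10^{-j}}+e^{2\pi i p10^{m-j}}}{3}$ up to a finite shift. Each factor is nonzero: a factor vanishes only if the three unit vectors form an equilateral configuration, and this can be avoided by perturbing the rational. Moreover the product converges, since factors are $1-O(10^{-j})$. So I would restrict to a dense set $D$ of rationals $t'$ for which $c_{t'}:=\lim_N|\widehat{\mu_{t'}}(q_{t'}10^N)|>0$.

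\textbf{Continuity and the $G_\delta$ set.} For fixed $\xi$, the map $t\mapsto\widehat{\mu_t}(\xi)$ is continuous. In fact it is Lipschitz with constant $\lesssim|\xi|$, since $\mu_t$ is the pushforward of the coding measure under a map that moves points by $\le |t-s|/9$. Define
\[
U_n=\{t\in[0,1]: \exists\, \xi\ge n \text{ with } |\widehat{\mu_t}(\xi)|>\phi(\xi)\cdot c(t)\}.
\]
The difficulty is that $c(t)$ varies, so I would instead use open sets of the form $V_n=\bigcup_{t'\in D}B(t',r(t',n))$. Here, for each $t'\in D$ and each $n$, pick $N$ large with $\xi=q_{t'}10^N\ge n$ and $|\widehat{\mu_{t'}}(\xi)|\ge c_{t'}/2$. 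Then choose the radius $r$ so small that by Lipschitz continuity $|\widehat{\mu_t}(\xi)|\ge c_{t'}/4$ on the ball. Each $V_n$ is open and dense, so $A=\bigcap_n V_n$ is a dense $G_\delta$ by Baire.

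\textbf{Main obstacle.} The main obstacle is the lower bound at $t\in A$. Such a $t$ lies in $B(t'_n,r_n)$ for some $t'_n$ depending on $n$, and the constants $c_{t'_n}/4$ could shrink to $0$, so we only get $|\widehat{\mu_t}(\xi_n)|\ge c_{t'_n}/4$. To fix this, I build $\phi$ into the construction. With $t'$ and $n$ given, choose $N$ so large that additionally $\phi(\xi)\le c_{t'}/4$. This is possible since $\phi\to0$ while $c_{t'}$ stays fixed as $N$ grows. Then on the ball $|\widehat{\mu_t}(\xi)|\ge c_{t'}/4\ge \phi(\xi)$, so the ratio is at least $1$. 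Hence every $t\in A$ has frequencies $\xi_n\ge n$ with $|\widehat{\mu_t}(\xi_n)|/\phi(\xi_n)\ge 1$, giving $\limsup\ge1>0$. The remaining routine check is that $c_{t'}>0$ on a dense set of rationals, i.e.\ that no factor of the tail product vanishes. This holds, for instance, for $t'=p/10^m$ with $p$ avoiding finitely many residues modulo a small number, or after a small rational perturbation.
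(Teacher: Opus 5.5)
Your argument is correct and is essentially the paper's proof: approximate by rationals $t'$ at which $|\widehat{\mu_{t'}}|$ does not decay along frequencies $q10^N$, use that $t\mapsto\widehat{\mu_t}(\xi)$ is $O(|\xi|)$-Lipschitz to get open neighbourhoods on which $|\widehat{\mu_t}(\xi)|\geq\phi(\xi)$, and intersect the resulting open dense sets via Baire. The only difference is that the paper proves a uniform bound $|\widehat{\mu_{p/10^n}}(10^L)|\geq c\,3^{-n}$ for all $L>n$, so a single frequency $10^{L_n}$ serves every $p$ at level $n$. That bound also shows that $c_{t'}>0$ needs no perturbation: in $\widehat{\mu_{p/10^m}}(10^N)$ every nontrivial factor is either of the form $\frac13(2+e^{i\theta})$, with modulus at least $\frac13$, or uniformly close to $1$.
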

Combining Proposition~\ref{prop:Main} with a result of Br\'emont~\cite{Bre} implies the following stronger statement, which in turn implies Theorem~\ref{t:slowselfsim}. 

\begin{prop}\label{p:fullselfsim}
	Let $\phi \colon [0,\infty)\to (0,1]$ satisfy $\lim_{\xi\to\infty}\phi(\xi) = 0$. Then there exists a dense $G_{\delta}$ set $A\subset [0,1]$ such that for all $t\in A$ the measure $\mu_{t}$ is Rajchman and satisfies 
    \begin{equation*}
    \limsup_{\xi\to\infty}\frac{|\widehat{\mu_{t}}(\xi)|}{\phi(\xi)}>0.
    \end{equation*}
\end{prop}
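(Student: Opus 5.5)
Combine Proposition~\ref{prop:Main} with Br\'emont's classification of Rajchman self-similar measures~\cite{Bre} and the Baire category theorem. Let $A_1\subset[0,1]$ be the dense $G_\delta$ set given by Proposition~\ref{prop:Main} for the given $\phi$. Let $A_2 = [0,1]\setminus \mathbb{Q}$. This is also a dense $G_\delta$: it is the intersection of the countably many open dense sets $[0,1]\setminus\{q\}$ for $q\in\mathbb{Q}$. Set $A = A_1\cap A_2$. By the Baire category theorem, $A$ is again a dense $G_\delta$ subset of $[0,1]$. For every $t\in A$ the $\limsup$ condition holds by construction. It therefore remains to show that $\mu_t$ is Rajchman whenever $t$ is irrational.

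For this we invoke Br\'emont's theorem~\cite{Bre}. For a homogeneous self-similar measure with contraction ratio $r$, it says the measure fails to be Rajchman only if $r^{-1}$ is a Pisot number and, after an affine change of coordinates, the translations lie in $\mathbb{Q}(r^{-1})$. Here $r^{-1}=10$ is Pisot, so the condition concerns the translations $0,1,t$.

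After normalising by an affine map, we may assume two of the translations are $0$ and $1$ (they already are). The remaining translation $t$ must then lie in $\mathbb{Q}(10)=\mathbb{Q}$. Equivalently, $\mu_t$ is non-Rajchman only if all pairwise differences of the translations are rationally proportional. Since the differences $1-0=1$ and $t-0=t$ have irrational ratio for $t\notin\mathbb{Q}$, Br\'emont's criterion shows $\mu_t$ is Rajchman for all irrational $t$. This holds in particular for all $t\in A$.

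The only step needing care is stating Br\'emont's criterion in exactly the form that applies. The invariance of the Rajchman property under affine conjugation justifies the normalisation used above. In the standard formulation, a self-similar measure on $\R$ is non-Rajchman iff there is an affine change of coordinates after which $r^{-1}$ is Pisot and all translations lie in $\mathbb{Q}(r^{-1})$. For $r=1/10$ with translations $\{0,1,t\}$, any such affine map $x\mapsto ax+b$ must send $0,1,t$ into $\mathbb{Q}$. This forces $a,b\in\mathbb{Q}$, and hence $t\in\mathbb{Q}$. The remaining work is verifying this reduction; everything else is formal.
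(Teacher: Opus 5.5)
Your proposal is correct and follows the paper's route: intersect the dense $G_\delta$ set from Proposition~\ref{prop:Main} with the irrationals, then apply Br\'emont's theorem, noting that conjugating by $g(x)=ax+b$ gives translations $\frac{b}{10a}-\frac{b}{a}+\frac{d}{10a}$ for $d\in\{0,1,t\}$, whose rationality forces $a,b\in\mathbb{Q}$ and hence $t\in\mathbb{Q}$. Saying that the affine map itself sends $0,1,t$ into $\mathbb{Q}$ is slightly loose, but it is equivalent after renaming the constant term, and your observation that ratios of differences of translations are invariant under conjugation is exactly the point.
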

\begin{proof}[Proof of Proposition~\ref{p:fullselfsim} assuming Proposition~\ref{prop:Main}]
Let $\phi$ be given and let $A$ be the corresponding dense $G_{\delta}$ set given by Proposition~\ref{prop:Main}. Let $A' \coloneqq A\cap (\mathbb{R}\setminus \mathbb{Q})$. 
Since the set of irrational numbers is a dense $G_{\delta}$ set, and the intersection of two dense $G_{\delta}$ sets is a dense $G_{\delta}$ set, to show that the desired conclusion holds it suffices to show that if $t\in A'$ then $\mu_{t}$ is Rajchman. 

A result of Br\'emont \cite[Theorem~2.5]{Bre} tells us that if $\mu_{t}$ is not Rajchman then there exists an affine map $g(x)=ax+b$ (with $a \in \R \setminus \{0\}$, $b \in \R$) such that $\{g^{-1}\circ S_i\circ g\}_{i=0}^{2}$ is in Pisot form. For such an affine map $g$, a quick calculation yields that 
\begin{align*}
	(g^{-1}\circ S_0\circ g)(x) &= \frac{x}{10}+\frac{b}{10a}-\frac{b}{a},\\
	(g^{-1}\circ S_1\circ g)(x) &= \frac{x}{10}+\frac{b}{10a}-\frac{b}{a}+\frac{1}{10a},\\
	(g^{-1}\circ S_2\circ g)(x) &= \frac{x}{10}+\frac{b}{10a}-\frac{b}{a}+\frac{t}{10a}.
\end{align*} 
We omit the definition of a Pisot IFS (see \cite[Definition~2.3]{Bre} for the definition) but merely remark that since the common contraction ratio $1/10$ is the reciprocal of an integer, if $\{g^{-1}\circ S_i\circ g\}_{i=0}^{2}$ were a Pisot IFS, then the translation parameters for each of the maps listed above would belong to $\mathbb{Q}$. 
Examining the first two equations it is clear that if $\{g^{-1}\circ S_i\circ g\}_{i=0}^{2}$ were in Pisot form then $a,b\in \mathbb{Q}$. Examining the third equation when equipped with this knowledge, we see that this in turn implies that $t\in\mathbb{Q}$. But this contradicts our assumption that $t\in A'$ and is therefore irrational. Thus $\{g^{-1}\circ S_i\circ g\}_{i=0}^{2}$ is never in Pisot form. So by Br\'emont's result it must be the case that $\mu_{t}$ is indeed Rajchman.
\end{proof}	

We can also use Proposition~\ref{p:fullselfsim} to prove a stronger statement that also holds in higher dimensions. 
The Fourier transform of a Borel probability measure $\mu$ on $\RR^d$ is defined by 
\begin{equation}\label{e:higherdimft}
\widehat{\mu} \colon \RR^d \to \C, \qquad \widehat{\mu}(\bxi) = \int_{\RR^d} e^{2 \pi i \langle \bxi,\bx \rangle} d \mu(\bx), 
\end{equation} 
where $\langle \cdot, \cdot \rangle$ is the Euclidean inner product. We say that a Borel probability measure on $\R^{d}$ is Rajchman if $\lim_{\|\bxi\|\to\infty}\widehat{\mu}(\bxi)=0$.

\begin{thm}\label{t:selfsimhigher}
    Let $d \geq 1$ be an integer and let $\phi \colon [0,\infty)\to (0,1]$ satisfy $\lim_{\xi\to\infty}\phi(\xi) = 0$. Then there exists a dense $G_{\delta}$ set $A\subset [0,1]$ such that for all $t\in A$, the measure $\mu_{t}^d \coloneqq \overbrace{\mu_{t}\times \cdots \times \mu_{t}}^{\text{d terms}}$ (which is a homogeneous self-similar measure for the $d$-fold product IFS) is Rajchman and satisfies 
    \[ \limsup_{\xi \to \infty} \frac{|\widehat{\mu_t^d}((\xi,0,\dotsc,0))|}{\phi(\xi)} >0. \]
\end{thm}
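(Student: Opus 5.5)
We take $A$ to be the dense $G_\delta$ set from Proposition~\ref{p:fullselfsim}, applied to the same function $\phi$, and verify both conclusions for each $t\in A$ directly from the product structure. First we record that $\mu_t^d$ is indeed a homogeneous self-similar measure. The product IFS $\{S_{i_1}\times\dotsb\times S_{i_d}\}$ with $(i_1,\dots,i_d)\in\{0,1,2\}^d$, weighted by the uniform vector $3^{-d}$, consists of similarities with common ratio $1/10$. Expanding the self-similarity relation $\mu_t=\sum_i \frac13 S_i\mu_t$ in each coordinate shows that $\mu_t^d$ satisfies the corresponding invariance relation. By uniqueness it is the self-similar measure of this IFS. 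It is non-atomic because $\mu_t$ is.

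The key identity is the factorisation of the Fourier transform of a product measure. By Fubini,
\[ \widehat{\mu_t^d}(\bxi)=\prod_{j=1}^d \widehat{\mu_t}(\xi_j), \qquad \bxi=(\xi_1,\dotsc,\xi_d). \]

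For the limsup statement, put $\bxi=(\xi,0,\dotsc,0)$. Since $\widehat{\mu_t}(0)=1$, this gives $\widehat{\mu_t^d}((\xi,0,\dotsc,0))=\widehat{\mu_t}(\xi)$. The required inequality is then exactly the conclusion of Proposition~\ref{p:fullselfsim}.

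For the Rajchman property, let $\|\bxi\|\to\infty$. Then $\max_j|\xi_j|\ge \|\bxi\|/\sqrt d\to\infty$. Every factor is bounded by $1$ in modulus, so
\[ |\widehat{\mu_t^d}(\bxi)|\le \min_j|\widehat{\mu_t}(\xi_j)| \le \sup_{|s|\ge \|\bxi\|/\sqrt d}|\widehat{\mu_t}(s)|. \]
The right-hand side tends to $0$ because $\mu_t$ is Rajchman, again by Proposition~\ref{p:fullselfsim}. The negative frequencies need no separate treatment, since $\widehat{\mu_t}(-s)=\overline{\widehat{\mu_t}(s)}$.

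The argument has no real obstacle. The only point needing care is the bookkeeping that identifies the product measure as the self-similar measure of the product IFS.
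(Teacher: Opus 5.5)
Your proposal is correct and follows essentially the same route as the paper: take $A$ from Proposition~\ref{p:fullselfsim}, factor $\widehat{\mu_t^d}$ as a product of one-dimensional transforms, use $\widehat{\mu_t}(0)=1$ to get the limsup along $(\xi,0,\dotsc,0)$, and bound $|\widehat{\mu_t^d}(\bxi)|$ by its smallest factor to get the Rajchman property. Your extra check that $\mu_t^d$ is the self-similar measure of the product IFS is a harmless addition that the paper only states parenthetically.
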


\begin{proof}[Proof of Theorem~\ref{t:selfsimhigher} assuming Proposition~\ref{p:fullselfsim}]
Let $A$ be as in Proposition~\ref{p:fullselfsim} and fix $t \in A$. 
Then for all $\xi_1,\dotsc,\xi_d \in \RR$, \eqref{e:higherdimft} and the product structure of $\mu_t^d$ give that 
\begin{equation}\label{e:prodofft}
\widehat{\mu_t^d}((\xi_1,\dotsc,\xi_d)) = \prod_{i=1}^d \widehat{\mu_t}(\xi_i).
\end{equation}
Also $\widehat{\mu_t}(0) = 1$, so by Proposition~\ref{p:fullselfsim}, for all $\xi \in \RR$, 
\[ \limsup_{\xi \to \infty} \frac{|\widehat{\mu_t^d}((\xi,0,\dotsc,0))|}{\phi(\xi)} = \limsup_{\xi \to \infty} \frac{|\widehat{\mu_t}(\xi)|}{\phi(\xi)} >0. \]
Moreover,~\eqref{e:prodofft}, the fact that $|\widehat{\mu_t}(\xi)| \leq 1$ for all $\xi \in \RR$, and the Rajchman property of $\mu_t$ imply that 
\[ 
|\widehat{\mu_t^d}((\xi_1,\dotsc,\xi_d))| \leq \min_{1 \leq i \leq d} |\widehat{\mu_t}(\xi_i)| \leq \sup_{|\xi| \geq \max_{1 \leq i \leq d} |\xi_i|} |\widehat{\mu_t}(\xi)| \xrightarrow[|(\xi_1,\dotsc,\xi_d)| \to \infty]{} 0. 
\]
Therefore $\mu_t^d$ is Rajchman. 
\end{proof}

What remains is to prove Proposition~\ref{prop:Main}. 
We will regularly use the fact that $\mu_{t} = \pi_{t} \mathbb{P}$ for each $t\in [0,1]$, where $\mathbb{P}$ is the uniform Bernoulli measure on $\{0,1,2\}^{\N}$ and $\pi_{t} \colon \{0,1,2\}^{\N}\to \mathbb{R}$ is given by 
\begin{equation}\label{e:pointinselfsim}
    \pi_{t}((a_j)_{j=1}^{\infty}) = \sum_{j \geq 1 : a_{j}\in \{0,1\}}\frac{a_j}{10^{j}}+\sum_{j \geq 1 : a_{j}=2}\frac{t}{10^{j}}.
\end{equation}
Yet another equivalent way of defining the homogeneous self-similar measure $\mu_{t}$ is by the infinite convolution formula
\[ \mu_t = *_{j=1}^{\infty} \frac{1}{3} (\delta_0 + \delta_{1/10^j} + \delta_{t/10^j}). \] 
This yields the following formula, which we will also use regularly: 
\[ \widehat{\mu_{t}}(\xi)=\prod_{j=1}^{\infty}\frac{1}{3}\left(1+e^{2\pi i\xi/10^{j}}+e^{2\pi it/10^{j}}\right).\]

We now collect some useful lemmas.
\begin{lem}
	\label{lem:approximation lem}
	Let $N\in\mathbb{N}$ and $\kappa>0$. If $|t-t'|<\frac{\kappa}{2\pi N}$ then $|\widehat{\mu_{t}}(\xi)-\widehat{\mu_{t'}}(\xi)|<\kappa$ for all $\xi\in [-N,N]$.
\end{lem}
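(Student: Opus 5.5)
The plan is to compare the two measures through the common symbolic coding rather than through the infinite product formula for $\widehat{\mu_t}$. Since $\mu_t = \pi_t\mathbb{P}$ and $\mu_{t'} = \pi_{t'}\mathbb{P}$ with the same Bernoulli measure $\mathbb{P}$, we can write
\[ \widehat{\mu_{t}}(\xi)-\widehat{\mu_{t'}}(\xi) = \int_{\{0,1,2\}^{\N}} \left( e^{2\pi i \xi \pi_t(\a)} - e^{2\pi i \xi \pi_{t'}(\a)} \right) d\mathbb{P}(\a), \]
where $\a = (a_j)_{j=1}^{\infty}$. We then bound the integrand pointwise and uniformly in $\a$.

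The first step is the elementary inequality $|e^{i\theta}-e^{i\theta'}|\leq |\theta-\theta'|$. This reduces the problem to estimating $2\pi|\xi|\,|\pi_t(\a)-\pi_{t'}(\a)|$.

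The second step uses \eqref{e:pointinselfsim}. The digits $a_j\in\{0,1\}$ contribute identically to both projections, so
\[ |\pi_t(\a)-\pi_{t'}(\a)| = |t-t'|\sum_{j : a_j=2} 10^{-j} \leq |t-t'|\sum_{j\geq 1}10^{-j} = \frac{|t-t'|}{9}. \]
For $|\xi|\leq N$ this gives an integrand bounded by $2\pi N|t-t'|/9 < \kappa/9 < \kappa$. Integrating against the probability measure $\mathbb{P}$ yields the strict inequality $|\widehat{\mu_t}(\xi)-\widehat{\mu_{t'}}(\xi)|<\kappa$.

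There is no real obstacle here. The only point requiring care is to write both measures as pushforwards of the same $\mathbb{P}$, so that a uniform pointwise bound on the projections suffices. The factor $1/9$ leaves plenty of room; even the crude bound $\sum_{j\ge1} 10^{-j}\le 1$ is enough. An alternative route would compare the infinite products term by term, using the bound $|\prod z_j-\prod w_j|\le\sum|z_j-w_j|$ for complex numbers of modulus at most $1$. With $|z_j-w_j|\le \frac{1}{3}\cdot 2\pi|\xi||t-t'|10^{-j}$, this gives the same conclusion, but the coding argument is cleaner.
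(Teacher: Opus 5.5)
Your proposal is correct and follows essentially the same argument as the paper: both write $\widehat{\mu_t}$ and $\widehat{\mu_{t'}}$ as integrals against the common Bernoulli measure $\mathbb{P}$ via $\pi_t$ and $\pi_{t'}$, apply the $2\pi|\xi|$-Lipschitz bound for $x\mapsto e^{2\pi i \xi x}$, and bound $|\pi_t((a_j)_j)-\pi_{t'}((a_j)_j)|$ uniformly by a constant times $|t-t'|$. Your extra factor $1/9$ is harmless (the paper simply uses $\sum_{j:a_j=2}10^{-j}\le 1$), and your explicit derivation of the strict inequality is slightly cleaner than the paper's final step, which writes only $\leq\kappa$.
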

\begin{proof}
Let $N,\kappa,t$ and $t'$ be as in the statement of the lemma. For $\xi\in [-N,N]$ we observe
\begin{align*}
&\phantom{=} \ |\widehat{\mu_{t}}(\xi) - \widehat{\mu_{t'}}(\xi)|\\
&= \left|\int e^{2\pi i \xi x} d\mu_{t}(x)-\int e^{2\pi i \xi x} d\mu_{t'}(x)\right|\\
&= \left|\int e^{2\pi i \xi \left(\sum_{j:a_j\in \{0,1\}}\frac{a_{j}}{10^{j}}+\sum_{j:a_j=2}\frac{t}{10^{j}}\right)}-e^{2\pi i \xi \left(\sum_{j:a_j\in \{0,1\}}\frac{a_{j}}{10^{j}}+\sum_{j:a_j=2}\frac{t'}{10^{j}}\right)} d\mathbb{P}((a_j)_j)\right|\\
&\leq \int \left|e^{2\pi i \xi \left(\sum_{j:a_j\in \{0,1\}}\frac{a_{j}}{10^{j}}+\sum_{j:a_j=2}\frac{t}{10^{j}}\right)}-e^{2\pi i \xi \left(\sum_{j:a_j\in \{0,1\}}\frac{a_{j}}{10^{j}}+\sum_{j:a_j=2}\frac{t'}{10^{j}}\right)}\right| d\mathbb{P}((a_j)_j)\\
&\leq \int 2\pi  |\xi| \left|\left(\sum_{j:a_j\in \{0,1\}}\frac{a_{j}}{10^{j}}+\sum_{j:a_j=2}\frac{t}{10^{j}}\right)-\left(\sum_{j:a_j\in \{0,1\}}\frac{a_{j}}{10^{j}}+\sum_{j:a_j=2}\frac{t'}{10^{j}}\right)\right| d\mathbb{P}((a_j)_j)\\
&\leq 2\pi |\xi||t-t'|\\
&\leq \kappa.
\end{align*}
In the third from last line we used that $x\mapsto e^{2\pi i \xi x}$ is Lipschitz with Lipschitz constant $2\pi |\xi|$. In the final line we used that $\xi\in [-N,N]$ and that $|t-t'|\leq \frac{\kappa}{2\pi N}$. 
\end{proof}

\begin{lem}\label{lem:Decay lower bound}
There exists $c>0$ such that the following holds. Let $n\in \mathbb{N}$ and $t=p/10^{n}$ for $p\in \mathbb{N}$ satisfying $0\leq p\leq 10^{n}$. Then for all integers $L > n$ we have 
\[ |\widehat{\mu_{t}}(10^{L})|\geq c\left(\frac{1}{3}\right)^{n}.\]
\end{lem}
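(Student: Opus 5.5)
We will work directly with the infinite product formula
\[ \widehat{\mu_{t}}(\xi)=\prod_{j=1}^{\infty}\frac{1}{3}\left(1+e^{2\pi i\xi/10^{j}}+e^{2\pi it\xi/10^{j}}\right) \]
(note the third exponent should carry $\xi$, since the atom of the $j$-th factor is at $t/10^j$). Take $\xi=10^{L}$ with $L>n$ and $t=p/10^n$. We split the product into three ranges of $j$: $1\leq j\leq L-n$, $L-n<j\leq L$, and $j>L$.

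\textbf{Step 1: the range $1\leq j\leq L-n$.} Here $10^{L}/10^{j}=10^{L-j}$ is an integer, so $e^{2\pi i 10^{L-j}}=1$. Also $t10^{L-j}=p\,10^{L-j-n}$ is an integer because $L-j\geq n$. Hence each of these factors equals exactly $1$.

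\textbf{Step 2: the range $L-n<j\leq L$.} There are exactly $n$ such $j$. For each, $10^{L-j}$ is still an integer, so the factor is $\frac13(2+e^{2\pi i t10^{L-j}})$. Its modulus is at least $\frac13(2-1)=\frac13$. The product over these $j$ therefore has modulus at least $(1/3)^n$, which is exactly the source of the $(1/3)^n$ in the statement.

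\textbf{Step 3: the tail $j>L$.} Write $j=L+m$ with $m\geq1$. The factor becomes $\frac13(1+e^{2\pi i 10^{-m}}+e^{2\pi i t10^{-m}})$, which does not depend on $L$ and only uses $t\in[0,1]$. For $m\geq1$ both phases $2\pi 10^{-m}$ and $2\pi t10^{-m}$ lie in $[0,2\pi/10]$. So each summand has real part at least $\cos(\pi/5)>0$, and the factor has modulus at least $\cos(\pi/5)$, bounded away from zero.

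To bound the infinite tail uniformly, we use that $|e^{i\theta}-1|\leq|\theta|$. This gives
\[ \left|\tfrac13\left(1+e^{2\pi i10^{-m}}+e^{2\pi it10^{-m}}\right)\right|\geq 1-\tfrac{4\pi}{3}10^{-m}. \]
For large $m$ this is close to $1$, and $\sum_m 10^{-m}<\infty$. Combining these, the tail product is bounded below by
\[ c\coloneqq \prod_{m\geq1}\max\!\left(\cos(\pi/5),\,1-\tfrac{4\pi}{3}10^{-m}\right)>0, \]
which is independent of $n$, $p$ and $L$.

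Multiplying the three ranges gives $|\widehat{\mu_t}(10^L)|\geq c(1/3)^n$. The only mildly delicate point is making $c$ uniform in $t$. Step 3 handles this, since its bounds use only $t\in[0,1]$ together with convergence of the product with summable defects. The rest is exact integer arithmetic.
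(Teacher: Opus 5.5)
Your proof is correct and is essentially the paper's argument: the same split of the product at $j=L-n$ and $j=L$, with factors exactly $1$, factors of modulus at least $\frac13$ via $|2+e^{i\theta}|\ge 1$, and a tail bounded below by $\prod_{m\ge 1}\bigl(1-\frac{4\pi}{3}10^{-m}\bigr)$, uniformly in $t\in[0,1]$. You are also right that the third exponent in the displayed product formula should be $2\pi i t\xi/10^{j}$, which is what the paper's proof actually uses; your $\cos(\pi/5)$ refinement is harmless but unnecessary, since already $1-\frac{4\pi}{30}>0$.
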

\begin{proof}
For $t=p/10^{n}$ we have
\begin{align*}
	\widehat{\mu_{t}}(10^{L}) &= \prod_{j=1}^{\infty}\frac{1}{3}\left(1+e^{2\pi i10^{L-j}}+e^{2\pi ip10^{L-j-n}}\right)\\
	&= \prod_{j=1}^{L-n}\frac{1}{3}\left(1+e^{2\pi i10^{L-j}}+e^{2\pi ip10^{L-j-n}}\right)\\*
	&\phantom{-} \times \prod_{j=L-n+1}^{L}\frac{1}{3}\left(1+e^{2\pi i10^{L-j}}+e^{2\pi ip10^{L-j-n}}\right)\\*
	&\phantom{-} \times \prod_{j=L+1}^{\infty}\frac{1}{3}\left(1+e^{2\pi i10^{L-j}}+e^{2\pi ip10^{L-j-n}}\right).
\end{align*}

We will treat each of the three terms above individually. For the first term we have
\begin{equation}
	\label{Eq:easy}
	\prod_{j=1}^{L-n}\frac{1}{3}\left(1+e^{2\pi i10^{L-j}}+e^{2\pi ip10^{L-j-n}}\right)=1
\end{equation}since $10^{L-j},10^{L-j-n}\in \mathbb{N}$ whenever $1\leq j\leq L-n$. For the second term we have 
\begin{equation}
	\label{Eq:easy2}
\left|	\prod_{j=L-n+1}^{L}\frac{1}{3}\left(1+e^{2\pi i10^{L-j}}+e^{2\pi ip10^{L-j-n}}\right)\right|\geq \left(\frac{1}{3}\right)^{n}
\end{equation} 
since 
\[ \left|1+e^{2\pi i10^{L-j}}+e^{2\pi ip10^{L-j-n}}\right|=|2+e^{2\pi ip10^{L-j-n}}|\geq 1 \] 
for $j\leq L$. For the third term we first remark that for $j\geq L+1$ we have
\[ |1-e^{2\pi i10^{L-j}}|\leq 2\pi 10^{L-j} \] 
and 
\[ |1-e^{2\pi ip10^{L-j-n}}|\leq 2\pi p10^{L-j-n}\leq 2\pi 10^{L-j}.\] 
Here we have used that $x \mapsto e^{2\pi i x}$ is Lipschitz with Lipschitz constant $2\pi$, and in the final inequality we used that $0\leq p\leq 10^{n}$. Consequently, for the third term we have 
\begin{align*}
	\left|\prod_{j=L+1}^{\infty}\frac{1}{3}\left(1+e^{2\pi i10^{L-j}}+e^{2\pi ip10^{L-j-n}}\right)\right|&= \prod_{j=L+1}^{\infty}\frac{1}{3}\left|1+e^{2\pi i10^{L-j}}+e^{2\pi ip10^{L-j-n}}\right|\\
	&\geq \prod_{j=1}^{\infty} \left(1-\frac{4\pi}{3\cdot 10^{j}}\right).
	\end{align*}
We emphasise that the final term stated above is strictly positive and independent of $L$ and $n$; we denote it by $c$. Summarising, we have shown that 
\begin{equation}
	\label{Eq:Uniform bound}
	\left|\prod_{j=L+1}^{\infty}\frac{1}{3}\left(1+e^{2\pi i10^{L-j}}+e^{2\pi ip10^{L-j-n}}\right)\right|\geq c.
\end{equation}

Combining~\eqref{Eq:easy}, \eqref{Eq:easy2} and~\eqref{Eq:Uniform bound} yields 
\[ |\widehat{\mu_{t}}(10^{L})|\geq c\left(\frac{1}{3}\right)^{n}.\] 
This completes our proof. 
\end{proof}

Equipped with the lemmas above we can now prove Proposition~\ref{prop:Main}.

\begin{proof}[Proof of Proposition~\ref{prop:Main}]
Let $\phi \colon [0,\infty)\to (0,1]$ satisfying $\lim_{\xi\to\infty}\phi(\xi)=0$ be given and let $c$ be as in Lemma~\ref{lem:Decay lower bound}. Since $\phi(\xi) \to 0$ as $\xi \to \infty$, for each $n\in\mathbb{N}$ there exists some $L_{n}$ sufficiently large such that 
\begin{equation}\label{e:lnlarge}
    c\left(\frac{1}{3}\right)^{n}\geq 2\phi(10^{L_{n}}).
\end{equation}
By Lemma~\ref{lem:Decay lower bound} this inequality implies that if $t=p/10^{n}$ then 
\begin{equation}
	\label{Eq:L_N}
	|\widehat{\mu_{p/10^{n}}}(10^{L_{n}})|\geq 2\phi(10^{L_{n}}).
\end{equation} 

To each $n\in \mathbb{N}$ we now associate the set 
\begin{equation}\label{e:definexn} 
X_{n} = \bigcup_{0\leq p\leq 10^{n}}\left(\frac{p}{10^{n}} - \frac{\phi(10^{L_{n}})}{2\pi 10^{L_{n}}},\frac{p}{10^{n}} + \frac{\phi(10^{L_{n}})}{2\pi 10^{L_{n}}}\right).
\end{equation}
Applying Lemma~\ref{lem:approximation lem} with $N = 10^{L_n}$ and $\kappa = \phi(10^{L_{n}})$, if $t\in X_{n}$ then 
\[ |\widehat{\mu_{t}}(10^{L_{n}})-\widehat{\mu_{p/10^{n}}}(10^{L_{n}})|\leq \phi(10^{L_{n}}) \] 
for some $0 \leq p\leq 10^{n}$. It follows from this inequality and~\eqref{Eq:L_N} that if $t\in X_{n}$ then 
\begin{equation}\label{e:lowerboundselfsim}
    |\widehat{\mu_{t}}(10^{L_{n}})|\geq \phi(10^{L_{n}}). 
\end{equation}

Now define \[ A \coloneqq \bigcap_{m=1}^{\infty}\bigcup_{n=m}^{\infty} X_{n}. \] 
The set $A$ is a dense $G_{\delta}$ set by the Baire category theorem. Moreover, every $t\in A$ satisfies 
\[ |\widehat{\mu_{t}}(10^{L_{n}})|\geq \phi(10^{L_{n}}) \] 
for infinitely many $n$. 
Our result now follows. 
\end{proof}

\subsection{Explicit self-similar measures with slow Fourier decay}\label{ss:self-sim-explicit}
    Given a particular function $\phi$, it is not difficult to find an example of $t$ for which the conclusion of Proposition~\ref{p:fullselfsim} holds. Indeed, one can define $L_1,L_2,\dotsc$ by~\eqref{e:lnlarge} from the proof of Proposition~\ref{prop:Main}. Also, let $k_1 < k_2 < \dotsb$ be a (rapidly growing) sequence of positive integers defined inductively so that for all $n \geq 1$ we have $k_{n+1} > 2k_n$, and 
    \begin{equation}\label{e:knlarge}
    \frac{1}{10^{k_{n+1}}} < \frac{\phi(10^{L_{{k_n}}})}{4 \pi 10^{L_{k_n}}}.
    \end{equation} 
    Let 
    \[ t = \sum_{n=1}^{\infty} \frac{1}{10^{k_n}}. \]
    Then for all $n \geq 1$ one can write $\sum_{m=1}^{n} \frac{1}{10^{k_m}}$ in the form $p/10^{k_n}$ for some integer $0 \leq p \leq 10^{k_n}$. Also,  
    \[ t - \sum_{m=1}^{n} \frac{1}{10^{k_m}} = \sum_{m=n+1}^{\infty} \frac{1}{10^{k_m}} < \sum_{m=k_{n+1}}^{\infty} \frac{1}{10^{m}} < \frac{2}{10^{k_{n+1}}} < \frac{\phi(10^{L_{{k_{n}}}})}{2 \pi 10^{L_{k_{n}}}}, \]
    where we used~\eqref{e:knlarge} for the final inequality. 
    We have shown that for all integers $n \geq 1$ we have $t \in X_{k_{n}}$, and thus $|\widehat{\mu_{t}}(10^{L_{k_n}})|\geq \phi(10^{L_{k_n}})$ as in~\eqref{e:lowerboundselfsim}. 
    Moreover, since $k_{n+1} > 2k_n$ for all $n$, the decimal expansion of $t$ contains infinitely many $1$s separated by strings of zeros whose lengths increase without bound. The expansion is therefore not eventually periodic, which implies that $t \notin \mathbb{Q}$. The fact $\mu_{t}$ is Rajchman now follows from \cite[Theorem~2.5]{Bre} and an analogous argument to that given in the proof of Proposition~\ref{p:fullselfsim}.

Next, we implement this strategy in the case $\phi(\xi) = (\log \log \xi)^{-1}$ to prove Theorem~\ref{t:explicitselfsim}. Here $t$ has the additional benefit of a simple closed form.
\begin{proof}[Proof of Theorem~\ref{t:explicitselfsim}]
    First note that for the reason described above, the $t$ defined by~\eqref{e:explicitt} is irrational and so $\mu_{t}$ is Rajchman. 

    Now let $\phi(\xi) = (\log \log \xi)^{-1}$ for $\xi \geq 10$. 
    Recalling Knuth's notation from~\eqref{e:knuth}, for all $n \in \N$ let $L_{10 \uparrow \uparrow n} = 10 \uparrow \uparrow (n+2)$. 
    For this choice of $\phi$ and $L_{10 \uparrow \uparrow n}$, define the sets $X_{10 \uparrow \uparrow 1}, X_{10 \uparrow \uparrow 2},\dotsc$ as in~\eqref{e:definexn}. 
    Note that we are only interested in the integers $L_m$ and sets $X_m$ when $m$ is of the form $m = 10 \uparrow \uparrow n$ for some positive integer $n$. 
    For all $n$ sufficiently large, with $c$ as in Lemma~\ref{lem:Decay lower bound}, 
    \[ \phi(10^{L_{10 \uparrow \uparrow n}}) = \phi(10 \uparrow \uparrow (n+3)) \leq \frac{1}{10 \uparrow \uparrow (n+1)} < \frac{c}{2} \left(\frac{1}{3}\right)^{10 \uparrow \uparrow n}. \] 
    Thus~\eqref{e:lnlarge} is satisfied for all sufficiently large integers of the form $10 \uparrow \uparrow n$. 
    
    We also have the following lower bound for all integers $n$ sufficiently large: 
    \[  \phi(10^{L_{10 \uparrow \uparrow n}}) = \frac{1}{(\log \log 10) + (\log 10)\cdot (10 \uparrow \uparrow (n+1))} \geq \frac{1}{2 \log 10} \cdot \frac{1}{10 \uparrow \uparrow (n+1)}. \]
    Using this and fact that 
    \begin{equation}\label{e:particularknuth}
    10^{L_{10 \uparrow \uparrow (3n - 1)}} = 10 \uparrow \uparrow (3n+2)
    \end{equation} 
    gives that for all sufficiently large $n$, 
    \begin{align*}
    \frac{\phi(10^{L_{10 \uparrow \uparrow (3n - 1)}})}{2\pi 10^{L_{10 \uparrow \uparrow (3n - 1)}}} &\geq \frac{1}{2 \pi} \cdot \frac{1}{10 \uparrow \uparrow (3n + 2)} \cdot \frac{1}{2\log 10} \cdot \frac{1}{10 \uparrow \uparrow (3n)}  \\ 
    &> \frac{1}{(10 \uparrow \uparrow (3n+2))^2} \\
    &= \frac{1}{10^{2 \cdot (10 \uparrow \uparrow (3n+1))}} \\
    &> \frac{2}{10 \uparrow \uparrow (3n+3)} \\
    &> \sum_{m=n+1}^{\infty} \frac{1}{10 \uparrow \uparrow (3m)} \\
    &= t - \sum_{m=1}^{n} \frac{1}{10 \uparrow \uparrow (3m)}.
    \end{align*}
    This means that $t \in X_{10 \uparrow \uparrow (3n-1)}$ for all sufficiently large $n$. 
    The proof of Proposition~\ref{prop:Main} (using Lemmas~\ref{lem:approximation lem} and~\ref{lem:Decay lower bound}) gives $|\widehat{\mu_t}(10^{L_{10 \uparrow \uparrow (3n - 1)}})| \geq \phi(10^{L_{10 \uparrow \uparrow (3n - 1)}})$ for large enough $n$ as in~\eqref{e:lowerboundselfsim}. 
    Substituting~\eqref{e:particularknuth} gives 
    \[ |\widehat{\mu_t}(10 \uparrow \uparrow (3n+2))| \geq \phi(10 \uparrow \uparrow (3n+2)) \] 
    for all sufficiently large $n$, as required. 
\end{proof}

Regarding Remark~\ref{r:moreexplicitexamples}, note that if $t = \sum_{n=1}^{\infty} \frac{1}{10 \uparrow \uparrow (kn)}$, then almost the same proof works with $\phi(\xi) = (\underbrace{\log \circ \dotsb \circ \log}_{k-1 \mbox{ times}} (\xi))^{-1}$ and $L_n = 10 \uparrow \uparrow (n + k - 1)$.

\subsection{Proof of Theorem~\ref{thm:slowequi}}\label{ss:slowequi}

\begin{proof}[Proof of Theorem~\ref{thm:slowequi}]
    The definition of $\psi(1), \psi(2),\dotsc$ is as follows: 
    \begin{equation*}
    \psi(n) = 
    \begin{cases}
    \frac{1}{10 \uparrow \uparrow 3} & \mbox{ for } 1 \leq n \leq (10 \uparrow \uparrow 5) - (10 \uparrow \uparrow 2), \\*
    \frac{1}{10 \uparrow \uparrow (3j)} & \parbox[t]{.6\textwidth}{ for $j \geq 2, (10 \uparrow \uparrow (3j-1)) - (10 \uparrow \uparrow (3j-4)) < n \leq (10 \uparrow \uparrow (3j+2)) - (10 \uparrow \uparrow (3j-1))$.}
    \end{cases}
    \end{equation*}
    Clearly for all integers $N > 1$ we have $\Psi(N) < N$, hence $\limsup_{N\to\infty}\frac{\log \Psi(N)}{\log N} \leq 1$. 
    For all $j \geq 2$, 
    \begin{align*}
    \Psi(10 \uparrow \uparrow (3j-1)) &> \frac{1}{10 \uparrow \uparrow (3j-3)} \cdot ((10 \uparrow \uparrow (3j-1)) - (10 \uparrow \uparrow (3j-4))) \\
    &> \frac{10 \uparrow \uparrow (3j-1)}{10 \uparrow \uparrow (3j-2)}.
    \end{align*}
    It follows that 
    \[ \limsup_{j \to \infty} \frac{\log\Psi(10 \uparrow \uparrow (3j-1))}{\log(10 \uparrow \uparrow (3j-1))} \geq 1 - \liminf_{j \to \infty} \frac{\log (10 \uparrow \uparrow (3j-2))}{\log(10 \uparrow \uparrow (3j-1))} = 1. \]
    Therefore $\limsup_{N\to\infty}\frac{\log \Psi(N)}{\log N}=1$ and~\eqref{i:psigrowsquickly} is satisfied. 

    To complete our proof it suffices to construct a $\gamma \in (0,1)$ for which 
    \[ \dist(10^{n-1}x-\gamma,\mathbb{Z})>\psi(n) \] 
    for all $n\in \N$ and $x \in X_{t}$. We will define $(\gamma_n)_{n=1}^{\infty} \in \{0,\dotsc,9\}^{\N}$ inductively and the decimal expansion of $\gamma$ will be $\gamma = 0.\gamma_1 \gamma_2\dots$. 
    There are many different choices we can make, but for concreteness let $\gamma_1 = \dotsb = \gamma_{10 \uparrow \uparrow 2} = 5$. 
    Let $x \in X_t$ be arbitrary and write its decimal expansion as $0.x_1 x_2 \dots$. From~\eqref{e:explicitt} and~\eqref{e:pointinselfsim}, we see that there exists $(a_{k})\in \{0,1,2\}^{\N}$ such that
    \begin{align*}
        x &= \sum_{k:a_{k}=1}\frac{1}{10^{k}}+\sum_{k:a_{k}=2}\frac{t}{10^{k}}\\
        &= \sum_{k:a_{k}=1}\frac{1}{10^{k}}+\frac{1}{10 \uparrow \uparrow 3}\sum_{k:a_{k}=2}\frac{1}{10^{k}}+\sum_{l=2}^{\infty}\frac{1}{10\uparrow\uparrow (3l)}\sum_{k:a_{k}=2}\frac{1}{10^{k}}.
    \end{align*} 
    Therefore \[ 0 < x-\sum_{1\leq k< 10\uparrow\uparrow 5:a_{k}=1}\frac{1}{10^{k}} - \frac{1}{10 \uparrow \uparrow 3}\sum_{1\leq k< 10\uparrow\uparrow 5 - 10\uparrow\uparrow 2:a_{k}=2}\frac{1}{10^{k}}<\frac{1}{10^{(10\uparrow\uparrow 5) - 1}}.\] 
    Hence $x_{n}\in \{0,1,2\}$ for $1 \leq n < 10 \uparrow \uparrow 5$. 

   Using the fact that $x_{n}\in \{0,1,2\}$ for all $1 \leq n < 10 \uparrow \uparrow 5$ and $x\in X_{t}$, we see that 
    \[ \dist(10^{n-1}x-\gamma,\mathbb{Z})>\psi(n)  \]
    for all $1 \leq n \leq (10 \uparrow \uparrow 5) - (10 \uparrow \uparrow 2)$ and $x\in X_{t}$ whenever $\gamma$ satisfies $\gamma_1 = \dotsb = \gamma_{10 \uparrow \uparrow 2} = 5$. 
    Assume now that for some positive integer $j$ we have defined $\gamma_1,\dotsc,\gamma_{10 \uparrow \uparrow (3j-1)}$ and shown that for every $\gamma$ whose decimal expansion begins with $\gamma_1,\dotsc,\gamma_{10 \uparrow \uparrow (3j-1)}$ we have 
    \[ \dist(10^{n-1}x-\gamma,\mathbb{Z})>\psi(n) \]
    for all $1 \leq n \leq (10 \uparrow \uparrow (3j+2)) - (10 \uparrow \uparrow(3j-1))$ and $x \in X_t$. Let $n$ be such that 
    \begin{equation}\label{e:inductiverangeofn}
    (10 \uparrow \uparrow (3j+2)) - (10 \uparrow \uparrow (3j-1)) < n \leq (10 \uparrow \uparrow (3j+5)) - (10 \uparrow \uparrow (3j+2)). 
    \end{equation}
    Write $t = t^{(1)} + t^{(2)}$, where 
    \[ t^{(1)} = \sum_{k=1}^{j+1} \frac{1}{10 \uparrow \uparrow (3k)}; \qquad t^{(2)} = \sum_{k=j+2}^{\infty} \frac{1}{10 \uparrow \uparrow (3k)}. \]
    Let $x\in X_{t}$, and let $(a_{k})\in \{0,1,2\}^{\N}$ be such that $\pi_{t}((a_k))=x$. By~\eqref{e:explicitt} and \eqref{e:pointinselfsim}, we can write $x = x^{(1)} + x^{(2)} + x^{(3)}$, where 
    \begin{align*}
    x^{(1)} &\coloneqq \sum_{1 \leq k < n : a_k = 1}\frac{1}{10^{k}} + \sum_{1 \leq k < n - (10 \uparrow \uparrow (3j+2)) : a_k = 2}\frac{t^{(1)}}{10^{k}}, \\
    x^{(2)} &\coloneqq \sum_{n \leq k < n + (10 \uparrow \uparrow (3j+2)) : a_k = 1}\frac{1}{10^{k}} + \sum_{n - (10 \uparrow \uparrow (3j+2)) \leq k < n + (10 \uparrow \uparrow (3j+2)) : a_k = 2}\frac{t^{(1)}}{10^{k}}, \\
    x^{(3)} &\coloneqq \sum_{k \geq n + (10 \uparrow \uparrow (3j+2)) : a_k = 1}\frac{1}{10^{k}} + \sum_{k \geq n + (10 \uparrow \uparrow (3j+2)) : a_k = 2}\frac{t^{(1)}}{10^{k}} + \sum_{k \geq 1 : a_k = 2} \frac{t^{(2)}}{10^{k}}.
    \end{align*}

   Now, for $i \in \{1,2,3\}$, let $x^{(i)}$ have decimal expansion $0.x^{(i)}_1 x^{(i)}_2 \dots$. 
   We have defined $x^{(1)}$ so that $x^{(1)}=\frac{p}{10^{n-1}}$ for some $p\in \mathbb{N}$. This implies that $10^{n-1}x \mod 1$ does not depend upon $x^{(1)}$. 
   The following counting argument for the possible decimal expansions of $x^{(2)}$ is the crux of the proof. 
   Let 
   \begin{align*} 
   B_{j}\coloneqq \Bigg\{&\sum_{0 \leq k < (10 \uparrow \uparrow (3j+2)) : b_k = 1}\frac{1}{10^{k+1}} + \sum_{ - (10 \uparrow \uparrow (3j+2)) \leq k <  (10 \uparrow \uparrow (3j+2)): b_k = 2}\frac{t^{(1)}}{10^{k+1}} \\*
   &:(b_{k})_{k=- (10 \uparrow \uparrow (3j+2))}^{(10 \uparrow \uparrow (3j+2))-1}\in \{0,1,2\}^{2(10 \uparrow \uparrow (3j+2))}\Bigg\}.
   \end{align*}
   The key observation is that for any $n$ satisfying~\eqref{e:inductiverangeofn} and $x\in X_{t}$ we have $10^{n-1}x^{(2)}\in B_{j}$. 
   Crucially though, the set $B_{j}$ does not depend upon $x$ or $n$ and contains at most $9^{10 \uparrow \uparrow (3j+2)}$ many elements. 
   This means that there are at most $9^{10 \uparrow \uparrow (3j+2)}$ possible values the string $(x^{(2)}_n, x^{(2)}_{n+1}, \dotsc, x^{(2)}_{n + (10 \uparrow \uparrow (3j+2)) - 1})$ can take. 
    But 
    \[ \# \{0,\dotsc,9\}^{(10 \uparrow \uparrow (3j+2)) - (10 \uparrow \uparrow (3j-1))} =  \frac{10 \uparrow \uparrow (3j+3)}{10 \uparrow \uparrow (3j)} > 1000\cdot  9^{10 \uparrow \uparrow (3j+2)}. \] Therefore there exists \[ (\gamma_1',\dotsc,\gamma_{(10 \uparrow \uparrow (3j+2)) - (10 \uparrow \uparrow (3j-1))}') \in \{0,\dotsc,9\}^{(10 \uparrow \uparrow (3j+2)) - (10 \uparrow \uparrow (3j-1))} \] such that for all $x \in X_t$ and $n$ satisfying~\eqref{e:inductiverangeofn} we have
    \[\left|\sum_{k=1}^{10\uparrow\uparrow (3j+2)}\frac{x_{n+k-1}^{(2)}}{10^{k}} - \sum_{k=1}^{10\uparrow\uparrow (3j-1)}\frac{\gamma_{k}}{10^{k}}-\sum_{k=1}^{10\uparrow\uparrow (3j+2)-10\uparrow\uparrow (3j-1)}\frac{\gamma_{k}'}{10^{k+10\uparrow\uparrow (3j-1)}}\right|\geq \frac{100}{10\uparrow\uparrow (3j+3)}.\] 
    This in turn implies that if $\gamma$ is any number whose decimal expansion begins with $\gamma_1,\dotsc,\gamma_{10 \uparrow \uparrow (3j-1)}, \gamma_1',\dotsc,\gamma_{(10 \uparrow \uparrow (3j+2)) - (10 \uparrow \uparrow (3j-1)) }'$ then 
    \begin{equation}
    \label{eq:gammabound}
    \left|\sum_{k=1}^{\infty}\frac{x_{n+k-1}^{(2)}}{10^{k}}-\gamma\right|\geq \frac{50}{10\uparrow\uparrow (3j+3)}
    \end{equation}
    for all $x\in X_{t}$ and $n$ satisfying~\eqref{e:inductiverangeofn}. 
    
    Turning now to $x^{(3)}$, if we inspect the definitions of $t^{(1)}$, $t^{(2)}$ and use the fact that $n$ satisfies~\eqref{e:inductiverangeofn}, we see that each of the three terms in the definition of $x^{(3)}$ is bounded above by $\frac{1}{10^{n-1} \cdot (10\uparrow\uparrow(3j+3))}$. 
    This implies that 
    \[ x^{(3)}\leq \frac{3}{10^{n-1} \cdot (10\uparrow\uparrow(3j+3))}, \]
    and hence 
    \begin{equation}
    \label{eq:x3bound}
    \sum_{k=1}^{\infty}\frac{x^{(3)}_{n+k-1}}{10^{k}}\leq \frac{3}{10\uparrow\uparrow (3j+3)}.
    \end{equation}

    It now follows from \eqref{eq:gammabound}, \eqref{eq:x3bound}, and the fact $x^{(1)} = \frac{p}{10^{n-1}}$ for some $p\in \mathbb{N}$, that for any $x\in X_{t}$ and $n$ satisfying~\eqref{e:inductiverangeofn}, if $\gamma$ is any number whose decimal expansion begins with $\gamma_1,\dotsc,\gamma_{10 \uparrow \uparrow (3j-1)}, \gamma_1',\dotsc,\gamma_{(10 \uparrow \uparrow (3j+2)) - (10 \uparrow \uparrow (3j-1)) }'$ then 
    \begin{align}\label{e:bigslowequicalc}
    \begin{split}
        \dist(10^{n-1}x-\gamma,\Z) &= \dist(10^{n-1}(x^{(1)}+x^{(2)}+x^{(3)})-\gamma,\Z)\\
        &= \dist(10^{n-1}(x^{(2)}+x^{(3)})-\gamma,\Z)\\
        &= \dist\left(\sum_{k=1}^{\infty}\frac{x^{(2)}_{n+k-1}}{10^{k}}+\sum_{k=1}^{\infty}\frac{x^{(3)}_{n+k-1}}{10^{k}}-\gamma,\Z\right )\\
        &\geq \min\left\{\left|\sum_{k=1}^{\infty}\frac{x^{(2)}_{n+k-1}}{10^{k}}+\sum_{k=1}^{\infty}\frac{x^{(3)}_{n+k-1}}{10^{k}}-\gamma\right|,0.2\right\}\\
        &\geq \min\left\{\left|\sum_{k=1}^{\infty}\frac{x^{(2)}_{n+k-1}}{10^{k}}-\gamma\right |-\frac{3}{10\uparrow\uparrow (3j+3)},0.2\right\}\\
        &> \frac{1}{10\uparrow\uparrow (3j+3)}\\
        &= \psi(n).
        \end{split}
    \end{align}
    In the first inequality above we used that $\gamma_{1}=5$ and \eqref{eq:x3bound} which together imply that 
    \[ \left|\sum_{k=1}^{\infty}\frac{x^{(2)}_{n+k-1}}{10^{k}}+\sum_{k=1}^{\infty}\frac{x^{(3)}_{n+k-1}}{10^{k}}-\gamma-p\right|\geq 0.2 \qquad \mbox{ for all } p\in \Z\setminus \{0\}. \]
    In the final equality we used the definition of $\psi$ and the fact that $n$ satisfies~\eqref{e:inductiverangeofn}. 
    We now define 
    \[ \gamma_{1},\dotsc, \gamma_{10\uparrow\uparrow (3j+2)} = \gamma_1,\dotsc,\gamma_{10 \uparrow \uparrow (3j-1)}, \gamma_1',\dotsc,\gamma_{(10 \uparrow \uparrow (3j+2)) - (10 \uparrow \uparrow (3j-1)) }'.\] By~\eqref{e:bigslowequicalc} and our inductive assumption for $\gamma_1,\dotsc,\gamma_{10 \uparrow \uparrow (3j-1)}$, if $\gamma$ has a decimal expansion that begins with $\gamma_{1},\dotsc, \gamma_{10\uparrow\uparrow (3j+2)}$, then for all $1\leq n  \leq (10 \uparrow \uparrow (3j+5)) - (10 \uparrow \uparrow (3j+2))$ and $x\in X_{t}$ we have 
    \[\dist(10^{n-1}x-\gamma,\Z)>\psi(n).\]
    
    It is clear that the above argument can be repeated indefinitely, yielding a sequence $(\gamma_{n})_{n=1}^{\infty}$. Taking $\gamma$ to be the point whose decimal expansion equals $(\gamma_{n})_{n=1}^{\infty}$, it follows from the above that 
    \[\dist(10^{n-1}x-\gamma,\Z)>\psi(n)\] 
    for all $n\in \N$. This completes our proof. 
    \end{proof}

\begin{rem}
    By replacing $10 \uparrow \uparrow (3n)$ in \eqref{e:explicitt} with an even more rapidly growing sequence of powers of $10$, one could of course make $(\psi(n))_{n=1}^{\infty}$ decay to $0$ even more slowly than in the proof of Theorem~\ref{thm:slowequi}, while still ensuring that $R(x,N;\gamma,\psi,10) = 0$ for all $N \in \N$ and all $x$ in the support of the self-similar measure.
\end{rem}

\section{Proof of results for nonlinear measures}\label{sec:overallproofnonlinear}

\subsection{Preliminaries}\label{sec:prelim}
In the proofs of Proposition~\ref{p:slowpushforward} and Theorem~\ref{t:slowzerodim} we will make use of results establishing polynomial Fourier decay for nonlinear pushforwards of self-similar measures. In a previous paper we proved the following (see \cite[Theorem~1.1]{ACWW25} for a different but related statement). 
\begin{thm}[\cite{BB25}, Corollary~1.5]
\label{t:originalbb}
Let $\mu$ be a self-similar measure on $[0,1]$. Then there exist $\eta, \kappa, C > 0$ such that the following holds. For all $C^2$ functions $f \colon [0,1] \to \RR$ which satisfy $f''(x) \neq 0$ for all $x \in [0,1]$, we have 
\begin{align*}
|\widehat{f \mu}(\xi)| \leq C &\left(1 + \max_{x \in [0,1]} |f'(x)| + (\max_{x \in [0,1]} |f'(x)|)^{-\kappa} + \max_{x \in [0,1]} |f''(x)| \right) \\*
&\times \left( 1 + (\min_{x \in [0,1]} |f''(x)|)^{-\kappa} \right) |\xi|^{-\eta} 
\end{align*} 
for all $\xi\neq 0$.
\end{thm}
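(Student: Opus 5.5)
The plan is to linearise $f$ on small cylinders of $\mu$ and then average the Fourier transform of $\mu$ over a range of dilated frequencies; the nonvanishing of $f''$ guarantees that these dilations are genuinely spread out. Write $\mu=\sum_{|w|=\ell}p_w T_w\mu$, stopping along words at the scale where the contraction ratio $r_w\approx r$, with $r=|\xi|^{-1/2-\delta}$ for a small $\delta>0$ to be fixed later. Then $f\mu=\sum_w p_w\,(f\circ T_w)\mu$. On each cylinder, Taylor's theorem gives
\[ f(T_w x)=f(T_w 0)+f'(T_w0)\,r_w x+O\bigl(r^2\max|f''|\bigr). \]
Since $|e^{2\pi i a}-e^{2\pi i b}|\le 2\pi|a-b|$, this yields
\[ |\widehat{f\mu}(\xi)|\le \sum_w p_w\bigl|\widehat{\mu}\bigl(\xi r_w f'(T_w0)\bigr)\bigr| + O\bigl(|\xi|r^2\max|f''|\bigr). \]
With the stated choice of $r$, the error term is $O(\max|f''|\,|\xi|^{-2\delta})$. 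This is where the factor $\max|f''|$ in the conclusion comes from.

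The core input is a sparse exceptional set (Erdős--Kahane type) statement for the fixed self-similar measure $\mu$. It says: there exist $\eta_0,\varepsilon>0$ such that for all large $T$, the set $E_T=\{u\in[-T,T]: |\widehat{\mu}(u)|>T^{-\eta_0}\}$ is covered by at most $T^{\varepsilon}$ intervals of length $1$, where $\varepsilon$ can be taken small compared with the Frostman exponent $s$ of $\mu$. I would prove it first in the homogeneous case using the product formula $\widehat{\mu}(u)=\prod_j \Phi(u\lambda^j)$. If $|\widehat{\mu}(u)|$ is large, then for most $j$ the point $u\lambda^j$ is close to a zero set of $1-|\Phi|$. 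This forces the "digits" of $u$ to be nearly determined level by level, and counting the resulting choices gives $T^{\varepsilon}$ intervals. For a general (non-homogeneous, possibly overlapping) self-similar measure, I would iterate the IFS and extract from $\mu$ a convolution factor which is a homogeneous self-similar measure. Since $|\widehat{\mu}|$ is bounded by the modulus of the Fourier transform of this factor, the homogeneous statement suffices. I expect this lemma, and in particular the uniformity needed in the general overlapping case, to be the main obstacle.

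With the lemma in hand, put $T\approx |\xi|r\max|f'|$ and $u(x)=\xi r_w f'(x)$ on each cylinder. Since $|f''|\ge \min|f''|$, the function $x\mapsto \xi r f'(x)$ is strictly monotone with derivative at least $|\xi| r\min|f''|$. Hence the preimage of each unit interval in $E_T$ has length at most $(|\xi| r\min|f''|)^{-1}$. By the Frostman property of $\mu$, the total $\mu$-mass of the $x=T_w0$ for which $u(x)\in E_T$ is at most $T^{\varepsilon}\,(|\xi|r\min|f''|)^{-s}$. This uses the fact that the weights $p_w$ of the stopping cylinders are comparable to the $\mu$-mass near the points $T_w0$. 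All remaining words contribute at most $T^{-\eta_0}$. The case where $T$ is not large (for example when $\max|f'|$ is tiny) is handled trivially by the bound $|\widehat{f\mu}|\le 1$, and it is absorbed into the factor $(\max|f'|)^{-\kappa}$. Collecting the three terms gives a bound of the form $C(\text{powers of }\max|f'|,(\max|f'|)^{-1},\max|f''|)(1+(\min|f''|)^{-\kappa})|\xi|^{-\eta}$, once $\delta$, $\varepsilon$ and $\eta$ are chosen small relative to $s$ and $\eta_0$. Finally, since each resulting factor is at most polynomial in the stated quantities, I would enlarge $\kappa$ and $C$ so that the bound takes exactly the stated shape.
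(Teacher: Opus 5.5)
The paper gives no proof of this statement; it is quoted from \cite{BB25}, so your proposal has to stand on its own. The outer part of your argument is sound. You linearise on an $r$-stopping with $r=|\xi|^{-1/2-\delta}$. You pull the unit-interval cover of $E_T$ back through $x\mapsto \xi r_w f'(x)$, whose derivative is at least $|\xi| r_w\min|f''|$. You then apply the Frostman bound, using $\sum_{w\in W'}p_w\le\mu(\bigcup_{w\in W'}T_w[0,1])$, and absorb the small-$T$ case into $(\max|f'|)^{-\kappa}$. All of this works, and the resulting factors fit the stated shape once $\kappa$ is chosen large enough. (When the ratios differ, $r_w$ is not constant on the stopping, so you must group words by the value and sign of $r_w$; there are only polylogarithmically many groups.) Everything therefore rests on the exceptional-set lemma for an arbitrary self-similar $\mu$.

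That is where the proposal breaks. Your reduction of the general case to the homogeneous one is not available. You propose to iterate the IFS and split off a homogeneous self-similar factor $\rho$ with $|\widehat{\mu}|\le|\widehat{\rho}|$. But if two maps have contraction ratios of different modulus, no iterate of the IFS is homogeneous, and $\mu$ in general has no exact convolution structure at all. The only exact convolution identities come from pairs of words with equal linear part. For example, $p_ap_b(T_{ab}\mu+T_{ba}\mu)$ equals $2p_ap_b$ times a two-point measure convolved with a rescaled copy of $\mu$; the two atoms are distinct when the fixed points of $T_a$ and $T_b$ differ. This identity accounts for only part of the mass.

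Iterating it gives only a disintegration $\mu=\int\mu_\omega\,dP(\omega)$ into random infinite convolutions along random, non-geometric sequences of scales. To close the gap you would need two things. First, an Erd\H{o}s--Kahane count uniform in $\omega$, with a large-deviation estimate ensuring that, outside a set of $\omega$ of probability much smaller than $T^{-\eta_0}$, the two-point factors occupy a positive proportion of the scales down to $T^{-1}$. Second, an averaging step: since $|\widehat{\mu}(u)|\le\int|\widehat{\mu_\omega}(u)|\,dP(\omega)$, the set where $|\widehat{\mu}|>T^{-\eta_0}$ lies where $P(u\in E^\omega_T)\gtrsim T^{-\eta_0}$, and that set is covered by $O(T^{\varepsilon+\eta_0})$ unit intervals. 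Alternatively, the lemma in full generality is known for self-similar measures on $\R$ (Tsujii), and you could cite it.

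You also placed the difficulty in the wrong spot. Overlaps are harmless in the homogeneous case, since $\widehat{\mu}(u)=\prod_j\Phi(\lambda^ju)$ holds regardless of overlaps, as for $\mu_t$ in this paper.

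One smaller repair is needed in the homogeneous count. If the translation differences are rationally independent, ``$u\lambda^j$ close to the zero set of $1-|\Phi|$'' is false. For instance, $\Phi(v)=\tfrac13(1+e^{2\pi iv}+e^{2\pi itv})$ with $t\notin\mathbb{Q}$ has $|\Phi|=1$ only at $0$. Instead, fix one nonzero translation difference $d$ and use that $|\Phi(v)|>1-\rho$ forces $\dist(dv,\Z)\lesssim\sqrt{\rho}$. Then count the nearest integers $k_j$ to $d\lambda^ju$: whenever $j$ and $j-1$ are both good, $k_{j-1}$ is the unique integer within $(1+|\lambda|^{-1})C\sqrt{\rho}<\tfrac12$ of $\lambda^{-1}k_j$.
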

From the proof of Theorem~\ref{t:originalbb} one in fact gets the following statement. 
\begin{thm}\label{t:bakerbanaji}
Let $\mu$ be a self-similar measure on $[0,1]$. 
Let $\mathcal{F}$ be a family of $C^2$ functions $[0,1] \to \RR$ and assume that there exist $C_1,C_2 > 0$ such that 
\begin{equation}
\label{e:C1C2 inequalities}
    \sup_{f \in \mathcal{F}} \max_{x \in [0,1]} |f'(x)| \leq C_{1} \qquad \mbox{ and } \qquad \sup_{f \in \mathcal{F}} \max_{x \in [0,1]} |f''(x)| \leq C_{2}.
\end{equation}
Assume moreover that $f''(x) \neq 0$ for all $f \in \mathcal{F}$ and $x \in [0,1]$. 
Then there exist $\eta,C>0$ such that for all $f \in \mathcal{F}$ and $\xi\neq 0$ we have 
\begin{equation}\label{e:firstsecondderivbounds} 
|\widehat{f\mu}(\xi)| \leq C (\min_{x\in [0,1]}|f''(x)|)^{-1}|\xi|^{-\eta}.
\end{equation}
We emphasise that $\eta$ and $C$ can depend upon $\mu$ and $\mathcal{F}$ but are independent of $f$.
\end{thm}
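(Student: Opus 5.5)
The plan is to deduce Theorem~\ref{t:bakerbanaji} directly from Theorem~\ref{t:originalbb}. The only real issue is the power of $\min|f''|$: Theorem~\ref{t:originalbb} gives a power $\kappa$ that may exceed $1$. I will handle this by trading some of the decay exponent $\eta$ against it, using the trivial bound $|\widehat{f\mu}(\xi)|\le 1$.

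\emph{Step 1: uniform form of Theorem~\ref{t:originalbb} on $\mathcal F$.} Fix $f\in\mathcal F$ and write $m\coloneqq\min_{x\in[0,1]}|f''(x)|>0$. Since $f''$ is continuous and nonvanishing on $[0,1]$, it has constant sign and $|f''|\ge m$. Hence $f'$ is strictly monotone with $|f'(1)-f'(0)|\ge m$, so $\max|f'|\ge m/2$ and $(\max|f'|)^{-\kappa}\le (2/m)^{\kappa}$. We also have $m\le C_2$, so $1\le C_2/m$. Using~\eqref{e:C1C2 inequalities}, the right-hand side of Theorem~\ref{t:originalbb} is then at most
\[ C\bigl(1+C_1+2^{\kappa}m^{-\kappa}+C_2\bigr)\bigl(1+m^{-\kappa}\bigr)|\xi|^{-\eta}\le C' m^{-2\kappa}|\xi|^{-\eta}, \]
where $C'$ depends only on $C,\kappa,C_1,C_2$. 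To get the last inequality, bound each $1$ by $(C_2/m)^{\kappa}$. The constants $\eta,\kappa,C$ come from Theorem~\ref{t:originalbb} and depend only on $\mu$, so $C'$ is uniform over $\mathcal F$.

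\emph{Step 2: interpolate with the trivial bound.} Since $|\widehat{f\mu}|\le1$, we have $|\widehat{f\mu}(\xi)|\le\min\{1,X\}$ with $X=C'm^{-2\kappa}|\xi|^{-\eta}$. For any $s\in(0,1]$ we have $\min\{1,X\}\le X^{s}$.
\begin{itemize}
\item If $2\kappa\ge 1$, take $s=1/(2\kappa)$. This gives $|\widehat{f\mu}(\xi)|\le C'^{s}m^{-1}|\xi|^{-\eta/(2\kappa)}$.
\item If $2\kappa<1$, take $s=1$ and use $m^{-2\kappa}\le C_2^{1-2\kappa}m^{-1}$, which holds because $m\le C_2$.
\end{itemize}
In either case~\eqref{e:firstsecondderivbounds} holds with $\eta'=\eta\min\{1,1/(2\kappa)\}$ and a constant depending only on $\mu$ and $\mathcal F$ through $C_1,C_2$.

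\emph{Expected obstacle.} The only delicate point is that Theorem~\ref{t:originalbb} contains the negative power $(\max|f'|)^{-\kappa}$, which a priori is not controlled by~\eqref{e:C1C2 inequalities}. The observation $\max|f'|\ge m/2$ in Step 1 converts it into a power of $m^{-1}$, and Step 2 absorbs that power. If one instead wanted to keep the original exponent $\eta$, one would have to re-enter the proof in~\cite{BB25} and track the dependence on $\min|f''|$ there. For the statement as given, this is unnecessary.
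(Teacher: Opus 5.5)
Your argument is correct, but it brings the exponent on $m^{-1}$ down to $1$ by a different route from the paper's. Both proofs share your Step~1 reductions. The bound $\max|f'|\ge\frac12\min|f''|$ controls the negative power of $\max|f'|$, and $\min|f''|\le C_2$ absorbs the constant terms into a power of $m^{-1}$.

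The paper does not use Theorem~\ref{t:originalbb} as a black box. It asserts, by inspecting the proof in \cite{BB25}, that $\kappa$ can be taken to be any sufficiently small positive number at the cost of shrinking $\eta$. It then raises this to $\kappa=1/2$ using $x^{-\kappa}\le 1+x^{-1/2}$ for $\kappa<1/2$, and obtains $m^{-1}$ by squaring $1+m^{-1/2}$.

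You instead accept whatever $\kappa$ the statement provides and interpolate with the trivial bound $|\widehat{f\mu}|\le 1$ via $\min\{1,X\}\le X^{s}$. This has three consequences:
\begin{itemize}
\item Your deduction depends only on the \emph{statement} of Theorem~\ref{t:originalbb}, not on its proof.
\item It gives the explicit exponent $\eta\min\{1,1/(2\kappa)\}$.
\item It justifies the paper's claim that $\kappa$ may be made small at the expense of $\eta$ without reopening the proof, since taking $s$ small makes the power of $m^{-1}$ arbitrarily small.
\end{itemize}
The price is a possibly smaller decay exponent. That is irrelevant here, because the statement only asks for some $\eta>0$ independent of $f$.
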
 

\begin{proof}
Inspecting the proof of Theorem~\ref{t:originalbb} given in \cite{BB25}, we see that $\kappa$ can be taken to be any sufficiently small positive number (though we may have to reduce $\eta$ in order to do this). Crucially, if the stated inequality in Theorem~\ref{t:originalbb} holds for some $\kappa<1/2$, then at the cost of increasing the value of the constant $C$ we can in fact take $\kappa=1/2$. Combining the inequalities from~\eqref{e:C1C2 inequalities} with the choice of $\kappa=1/2$, we see that Theorem~\ref{t:originalbb} implies that for all $f\in\mathcal{F}$ there exists $C_{\mathcal{F},\mu}>0$ depending upon $\mathcal{F}$ and $\mu$ such that 
\begin{equation}\label{e:nonlinpushintermediatestep}
|\widehat{f\mu}(\xi)|\leq C_{\mathcal{F},\mu} \left(1+(\max_{x \in [0,1]} |f'(x)|)^{-1/2}  \right) \left( 1 + (\min_{x \in [0,1]} |f''(x)|)^{-1/2} \right) |\xi|^{-\eta} 
\end{equation}
for all $\xi\neq 0$. 
By a mean value theorem argument, we know that for all $f$, 
\[ \max_{x \in [0,1]} |f'(x)| \geq \frac{1}{2} \min_{x \in [0,1]} |f''(x)|. \] 
Substituting this into~\eqref{e:nonlinpushintermediatestep} gives that for all $f \in \mathcal{F}$ and $\xi\neq 0$ we have 
\[ |\widehat{f\mu}(\xi)|\leq 2C_{\mathcal{F},\mu} \left( 1 + (\min_{x \in [0,1]} |f''(x)|)^{-1/2} \right)^{2} |\xi|^{-\eta}. \] 
By our uniform upper bound on the second derivatives, we know that there exists $C_{3}>0$ such that for all $f\in\mathcal{F}$ we have $(\min_{x\in [0,1]}|f''(x)|)^{-1/2}\geq C_{3}$. Thus, the constant $1$ term appearing within the bracket can be absorbed into our second derivative term at the cost of potentially increasing our multiplicative constant $C_{\mathcal{F},\mu}$. Once this constant term has been absorbed, our result immediately follows after squaring what remains in the bracket. 
\end{proof}

For the deduction of Theorem~\ref{t:slowconformal} from Proposition~\ref{p:slowpushforward}, we will use the following standard fact which says that smooth diffeomorphic images of self-similar measures are self-conformal measures. 
\begin{lem}\label{lem:pushforwardconformal}
    Let $\mu$ be a self-similar measure on $\R$. Let $f\colon \R \to \R$ be $C^{\rho}$ (here $\rho \in \{1,2,\dotsc \}$ or $\rho=\infty$ or $\rho=\omega$) with $f'(x) \neq 0$ for all $x \in [\min \supp(\mu),\max \supp(\mu)]$. 
    Then $f \mu$ is a self-conformal measure for some $C^{\rho}$ conformal IFS of the form \[ \left\{f\circ R_{a_{1}}\circ \cdots \circ R_{a_{k}}\circ f^{-1}\right\}_{(a_{1},\dotsc,a_{k})\in \A^{k}}\] for some $k\in \N$.  
\end{lem}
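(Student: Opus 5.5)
The plan is to pass to a high enough iterate of the IFS $\{R_a\}_{a\in\A}$ generating $\mu$ so that each cylinder map composed with $f$ lands in a region where $f$ is invertible, and then conjugate each iterated map by $f$. Write $I \coloneqq [\min\supp(\mu),\max\supp(\mu)]$. Since $f'$ is continuous and nonvanishing on the compact interval $I$, it has constant sign there. I will extend $f$ to an open interval $U \supset I$ on which $f'$ still does not vanish, which is possible by continuity of $f'$. Shrinking $U$ if necessary, I take $U=(\min I-\delta,\max I+\delta)$ for some $\delta>0$. Then $f|_U$ is a strictly monotone $C^\rho$ map onto an open interval $V=f(U)$, and by the inverse function theorem $f^{-1}\colon V\to U$ is $C^\rho$; in the case $\rho=\omega$ this uses the real-analytic inverse function theorem.

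Next I use self-similarity. Let $\mu=\sum_a p_a R_a\mu$, where each $R_a$ is a similarity with ratio $|r_a|<1$. Iterating gives
\[ \mu=\sum_{(a_1,\dotsc,a_k)\in\A^k} p_{a_1}\cdots p_{a_k}\, (R_{a_1}\circ\dotsb\circ R_{a_k})\mu \]
for every $k\in\N$. The attractor is $\supp(\mu)\subset I$, and each $R_a$ maps $\supp(\mu)$ into itself. Hence each $R_a$ maps $I$, the convex hull of the support, into $I$, because an affine map sends a convex hull to the convex hull of the image. So each composition $R_{\mathbf a}\coloneqq R_{a_1}\circ\dotsb\circ R_{a_k}$ maps $I$ into $I$ and has contraction ratio at most $r^k$, where $r=\max|r_a|$. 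I choose $k$ so large that $R_{\mathbf a}$ maps $U$ into $U$, which holds once $r^k(|I|+2\delta)\le$ something small. Indeed, $R_{\mathbf a}(U)$ is an interval of length at most $r^k(|I|+2\delta)$ containing the interval $R_{\mathbf a}(I)\subset I$. Any point of $R_{\mathbf a}(U)$ therefore lies within $r^k(|I|+2\delta)$ of $I$, which is less than $\delta$ once $k$ is large. The same choice of $k$ also makes each iterated map a uniform contraction in the conjugated coordinates, which I address below.

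Now I define $S_{\mathbf a}\coloneqq f\circ R_{\mathbf a}\circ f^{-1}\colon V\to V$. Each $S_{\mathbf a}$ is a composition of $C^\rho$ maps and so is $C^\rho$, with nonvanishing derivative
\[ S_{\mathbf a}'(y)=f'(R_{\mathbf a}f^{-1}y)\, r_{\mathbf a}/f'(f^{-1}y). \]
To make it a contraction, I restrict to a compact interval $J=f(\overline{U'})$, where $I\subset U'\Subset U$ and $U'$ is chosen as in the previous step with $\delta/2$ in place of $\delta$. On $\overline{U'}$ we have $0<m\le|f'|\le M$, so $|S_{\mathbf a}'|\le (M/m) r^k<1$ after possibly enlarging $k$. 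Also $S_{\mathbf a}(J)\subset J$, since $R_{\mathbf a}(\overline{U'})\subset\overline{U'}$ by the argument above. Finally, pushing the iterated invariance relation forward by $f$ and using $f\circ R_{\mathbf a}=S_{\mathbf a}\circ f$ on $\supp\mu$ gives
\[ f\mu=\sum_{\mathbf a} p_{\mathbf a} S_{\mathbf a}(f\mu). \]
By uniqueness of the invariant measure (Hutchinson), $f\mu$ is the self-conformal measure for $\{S_{\mathbf a}\}_{\mathbf a\in\A^k}$ on $J$, with weights $p_{\mathbf a}=p_{a_1}\cdots p_{a_k}$. If the maps must be defined on $[0,1]$, I conjugate additionally by an affine rescaling of $J$.

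The only step needing care is the choice of $k$ and of the neighbourhood. The iterates must contract in the $f$-coordinates even though $f'$ varies, and they must preserve a neighbourhood of $I$ on which $f^{-1}$ is defined. Both are handled by the distortion bound $M/m$ and by taking $k$ large.
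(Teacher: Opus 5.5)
Your proposal is correct and follows essentially the same route as the paper: iterate the IFS so that the contraction ratios beat the distortion $\max|f'|/\min|f'|$, conjugate each iterated map by $f$, push the iterated invariance relation forward, and invoke uniqueness of the invariant measure. The paper works directly on $f([\min\supp(\mu),\max\supp(\mu)])$, since each $R_a$ maps this convex hull into itself, so your enlarged neighbourhood $U'$ is unnecessary; your optional final affine rescaling should be dropped, because it would make a rescaled copy of $f\mu$, rather than $f\mu$ itself, the invariant measure.
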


\begin{proof}
    Let $\{R_{a}\}_{a\in \mathcal{A}}$ and $(p_{a})_{a\in\A}$ be the IFS and probability vector corresponding to $\mu$.  By the inverse function theorem, $f$ has a $C^{\rho}$ inverse on some open neighbourhood of $[\min \supp(\mu),\max \supp(\mu)]$. 
    By iterating the IFS if necessary, we may assume without loss of generality that for each $a\in \A$ the contraction ratio of $R_{a}$ is at most 
    \begin{equation}\label{e:ratio0.9}
        0.9 \times \frac{\min_{x \in [\min \supp(\mu),\max \supp(\mu)]} |f'(x)|}{\max_{x \in [\min \supp(\mu),\max \supp(\mu)]} |f'(x)|}. 
    \end{equation}
    For each $a \in \A$ let $S_a \coloneqq f\circ R_{a} \circ f^{-1}$. 
    Then by the chain rule and~\eqref{e:ratio0.9} we have 
    \[ \max_{a \in \A} \max_{x \in f([\min \supp(\mu),\max \supp(\mu)])} |S_a'(x)| \leq 0.9. \] It also follows from the chain rule that each $S_{a}$ has non-vanishing derivative. 
    Thus $\{S_{a}\}_{a\in \A}$ is an IFS of $C^{\rho}$ conformal contractions on $f([\min \supp(\mu),\max \supp(\mu)])$. 
    For all Borel $A \subset \R$ we have 
	    \begin{align*} 
		f\mu(A) = \mu(f^{-1} (A)) = \sum_{a \in \A} p_{a} \mu(R_{a}^{-1}\circ f^{-1}(A)) &= \sum_{a \in \A} p_{a} \mu (f^{-1}\circ S_{a}^{-1}(A))\\
		&= \sum_{a \in \A} p_{a} S_{a}(f\mu)(A), 
		\end{align*}
    where we used the self-similarity of $\mu$ in the second equality and the definition of $S_a$ in the third.
    Since the self-conformal measure for a given IFS and probability vector is unique, $f \mu$ is the self-conformal measure for the IFS $\{S_a\}_{a\in \A}$ and probability vector $(p_{a})_{a\in \A}$. 
\end{proof}

We will need the following facts about self-similar measures.
\begin{prop}\label{prop:frostman}
Let $\mu$ be a self-similar measure on $\R$. Then there exist $0 < s_F \leq s_M < \infty$, such that for all $x \in \mbox{supp}(\mu)$, $y \in \R$ and $0 < r \leq 1$, we have 
\[  r^{s_M} \lesssim \mu(x-r,x+r) \qquad \mbox{ and } \qquad \mu(y-r,y+r) \lesssim r^{s_F}, \]
with implicit constants depending only on $\mu$.\footnote{$F$ stands for Frostman in light of Frostman's lemma; $M$ stands for Minkowski in light of~\cite{FFK}.}
\end{prop}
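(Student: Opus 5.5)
We prove both bounds by iterating the self-similarity relation down to scale $r$ and comparing with a single cylinder. Write the IFS as $\{R_a(x) = r_a x + b_a\}_{a\in\A}$ with probability vector $(p_a)$, and set
\[ r_{\min} = \min_a |r_a|, \quad r_{\max} = \max_a |r_a|, \quad p_{\min} = \min_a p_a, \quad p_{\max} = \max_a p_a. \]
After an affine change of coordinates we may assume $\supp(\mu) \subset [0,1]$ and $\mathrm{diam}\,\supp(\mu) = 1$. For a word $w = a_1\dotsb a_n$ let $R_w = R_{a_1}\circ\dotsb\circ R_{a_n}$, $r_w = \prod_i |r_{a_i}|$ and $p_w = \prod_i p_{a_i}$. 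Iterating the invariance relation gives $\mu = \sum_{|w|=n} p_w R_w\mu$. For a fixed $r\in(0,1]$ we use the stopping set
\[ W_r = \{w : r_w \leq r < r_{w^-}\}, \]
where $w^-$ is $w$ with its last letter deleted. This is a cut set of the coding tree, so $\mu = \sum_{w\in W_r} p_w R_w\mu$. Moreover, for every $w\in W_r$ we have $r r_{\min} < r_w \leq r$, and the length of $w$ lies between $\log r/\log r_{\min}$ and $\log r/\log r_{\max} + 1$.

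For the lower bound, take $x\in\supp(\mu)$. Its coding passes through some $w\in W_r$ with $x\in R_w(\supp\mu)$. Since $R_w(\supp\mu)$ is a set of diameter $r_w \leq r$ containing $x$, it lies inside $[x-r,x+r]$. To get the open interval we apply this at scale $r/2$ instead, so that $(x-r,x+r)\supset R_w(\supp\mu)$. Hence
\[ \mu(x-r,x+r) \geq p_w \geq p_{\min}^{|w|} \geq p_{\min}^{\,\log(r/2)/\log r_{\max}+1} \gtrsim r^{s_M}, \]
with $s_M = \log p_{\min}/\log r_{\max}$.

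For the upper bound, fix $y\in\R$ and cover $(y-r,y+r)$ by the pieces $R_w\mu$ with $w\in W_r$. We get
\[ \mu(y-r,y+r) \leq \sum_{w\in W_r,\ R_w(\supp\mu)\cap(y-r,y+r)\neq\emptyset} p_w. \]
Without a separation condition the number of pieces meeting the interval need not be bounded, so counting pieces is the main obstacle. Two routes are available.

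The first route avoids counting altogether. Non-atomicity (the standing assumption) together with compactness of $\supp\mu$ gives, via the continuity of $y\mapsto\mu([y-\delta,y+\delta])$ and a Dini-type argument, some $\delta>0$ with $\sup_y \mu([y-\delta,y+\delta]) \leq \theta < 1$. We then apply the decomposition at a single level $n$ at a time. If $\sup_y \mu(B(y,r))$ is denoted $M(r)$, then
\[ M(r) \leq \sum_{|w|=n} p_w\, M(r/r_w). \]
Choosing $n$ so that all $r_w$ are comparable, for instance by working with the homogeneous case or with stopping sets, yields a recursion $M(r\rho)\leq \theta M(r)$. Iterating this gives $M(r)\lesssim r^{s_F}$ with $s_F = \log\theta/\log\rho > 0$. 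The delicate point is the inequality $\sum_w p_w \mu\big(R_w^{-1}B(y,r)\big)\leq \sup_w \mu\big(R_w^{-1}B(y,r)\big)$. This holds because the weights $p_w$ sum to $1$, and $R_w^{-1}B(y,r)$ is a ball of radius $r/r_w \leq 1/r_{\min}$.

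The second route is to cite the argument of \cite[Proposition~2.2]{FL}. In either case, $s_F\leq s_M$ follows by comparing the two bounds at $y = x\in\supp\mu$ as $r\to0$.
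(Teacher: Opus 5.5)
Your lower bound is correct and is the argument the paper has in mind. The paper takes the sharper exponent $s_M=\max_a \log p_a/\log|r_a|$, using $p_w\ge r_w^{s_M}$; your $\log p_{\min}/\log r_{\max}$ is cruder but valid. Your second route for the upper bound is exactly the paper's proof, which simply cites \cite[Proposition~2.2]{FL}.

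Your first route, however, has a genuine gap: the recursion $M(\rho r)\le\theta M(r)$ does not follow from anything you state. Since $r_w\le 1$ gives $M(r/r_w)\ge M(r)$, and $\sum_w p_w=1$, the inequality $M(r)\le\sum_{|w|=n}p_w M(r/r_w)$ follows from monotonicity of $M$ alone and carries no information. Combined with $M(\delta)\le\theta$, it only yields $M(r)\le\theta$ for $r\le\delta$; the function $M\equiv\theta$ on $(0,\delta]$ is consistent with everything you use. The step you flag as delicate, bounding the average by the supremum, is precisely where the needed information is lost. What must be exploited is that the pieces meeting $B(y,r)$ carry total weight comparable to $\mu$ of a slightly larger ball, not total weight $1$.

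The fix is to localise. Normalise $\mathrm{diam}\,\supp\mu=1$, let $r<\delta r_{\min}$, and take the stopping set $W=W_{r/(\delta r_{\min})}$, so that $r/\delta<r_w\le r/(\delta r_{\min})$ for all $w\in W$. Then $R_w^{-1}B(y,r)$ is an interval of radius $r/r_w<\delta$, so it has $\mu$-measure at most $\theta$. It has measure $0$ unless $R_w(\supp\mu)$ meets $B(y,r)$, and any such piece has diameter $r_w$, so it lies inside $B(y,Cr)$ with $C=1+(\delta r_{\min})^{-1}$. Hence
\[ \mu(B(y,r))\le\theta\sum_{\substack{w\in W\\ R_w(\supp\mu)\cap B(y,r)\neq\varnothing}}p_w\le\theta\sum_{\substack{w\in W\\ R_w(\supp\mu)\subset B(y,Cr)}}p_w\le\theta\,\mu(B(y,Cr)). \]
The last step holds because $\mu(B(y,Cr))=\sum_{w\in W}p_w\,\mu(R_w^{-1}B(y,Cr))$ and $\mu(R_w^{-1}B(y,Cr))=1$ whenever $R_w(\supp\mu)\subset B(y,Cr)$. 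Taking suprema gives $M(r)\le\theta M(Cr)$ for $r<\delta r_{\min}$. Iterating gives $M(r)\lesssim r^{\log(1/\theta)/\log C}$, with no homogeneity or separation assumption. With this replacement your first route becomes a self-contained proof of what the paper obtains by citation.
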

\begin{proof}
Let $r_i$ be the $i$th contraction ratio and $p_i$ the corresponding probability weight. It is not difficult to see that we can take $s_M = \max_i \frac{\log p_i}{\log r_i}$. 
The existence of $s_F$ follows from \cite[Proposition~2.2]{FL}. 
\end{proof}
In the special case of missing digit measures, it can be shown that the $s_{F}$ and $s_{M}$ appearing in Proposition~\ref{prop:frostman} coincide. 

We will also use the notion of an $r$-stopping of a self-similar measure. Given a self-similar IFS $\{S_{a} \colon [0,1] \to [0,1]\}_{a \in \mathcal{A}}$ and $r>0$, an $r$-stopping is a finite set $\mathcal{I}$ of finite words over the alphabet $\A$ with $\mu = \sum_{\a\in \mathcal{I}} p_{\a} S_{\a} \mu$ and such that each $\a$ satisfies 
\[ r\cdot \min_{a\in \A}\|S'_{a}\|_{\infty} \leq \mbox{diam}(S_{\a}([0,1])) \leq r. \]
Here we use the notation $S_{\a} \coloneqq S_{a_1} \circ S_{a_2} \circ\cdots \circ S_{a_{n}}$ and $p_{\a} \coloneqq p_{a_{1}} p_{a_{2}} \dotsb p_{a_{n}}$ for a word $\a=(a_{1},\dotsc, a_{n})$. 
By carefully iterating the IFS, one can check that an $r$-stopping exists for all $r$ sufficiently small. 

Finally, we will use the following simple lemma. 
\begin{lem}\label{lem:replacebymonotone}
    If $\phi \colon [0,\infty) \to (0,1]$ satisfies $\lim_{\xi\to\infty}\phi(\xi)=0$, then there exists a continuous and strictly decreasing function $\psi \colon [0,\infty) \to (0,1]$ such that $\lim_{\xi\to\infty}\psi(\xi)=0$, $\phi = o(\psi)$, and $\psi(\xi) \geq (1+\xi)^{-1}$ for all $\xi\geq 0$. 
\end{lem}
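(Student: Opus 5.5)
The plan is to build $\psi$ in two stages: first a step function that dominates $\phi$ by a factor tending to infinity, then a continuous strictly decreasing function lying above that step function.

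\emph{Step 1: the step function.} Define $M(\xi) \coloneqq \sup_{s \geq \xi} \phi(s)$. This is non-increasing, takes values in $(0,1]$, and tends to $0$ since $\phi(\xi)\to 0$. Choose integers $0 = N_0 < N_1 < N_2 < \dotsb$ such that $M(N_k) \leq 4^{-k}$ for all $k \geq 1$. On $[N_k,N_{k+1})$ we then have $\phi(\xi) \leq M(N_k) \leq 4^{-k}$. Set $g(\xi) \coloneqq 2^{-k}$ for $\xi\in[N_k,N_{k+1})$. Then $\phi(\xi)/g(\xi) \leq 2^{-k} \to 0$, so $\phi = o(g)$, and $g$ is non-increasing and tends to $0$.

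\emph{Step 2: smoothing and the lower bound.} Let $\tilde g$ be the function that equals $2^{-k+1}$ at $N_k$ and is linear on each $[N_k,N_{k+1}]$. On $[N_k,N_{k+1}]$ it decreases from $2^{-k+1}$ to $2^{-k}$, so $\tilde g$ is continuous and strictly decreasing, $\tilde g \geq g$, and $\tilde g \to 0$.

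Now define
\[ \psi(\xi) \coloneqq \min\Bigl\{1,\ \tilde g(\xi) + (1+\xi)^{-1}\Bigr\}. \]

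\emph{Step 3: checking the properties.}
\begin{enumerate}
\item Since $\tilde g(0) = 2$, the sum $\tilde g + (1+\xi)^{-1}$ exceeds $1$ near $0$, so the truncation is active there. As a result $\psi$ is constant equal to $1$ on an initial interval and is not strictly decreasing. To fix this, rescale and use $\tilde g/4$ in place of $\tilde g$. Then $\tilde g(\xi)/4 + (1+\xi)^{-1} \leq 1/2 + 1 \cdot$ is still too big at $\xi = 0$. The cleaner fix is to drop the $\min$ and instead set
\[ \psi(\xi) \coloneqq \tfrac12\bigl(\tilde g(\xi)/2 + (1+\xi)^{-1}\bigr) + \tfrac12 (1+\xi)^{-1}. \]
This simplifies to $\psi = \tilde g/4 + (1+\xi)^{-1}$, which at $\xi = 0$ gives $1/2 + 1 > 1$. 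So normalise differently: start from $\tilde g(N_0) = 1$, i.e.\ take values $2^{-k}$ at $N_k$ and linear in between, while keeping $\tilde g \geq g/2$ (shift the indices). Then set
\[ \psi(\xi) \coloneqq \tfrac12 \tilde g(\xi) + \tfrac12 (1+\xi)^{-1}. \]
\item With this choice $\psi$ is continuous and strictly decreasing, being a sum of a non-increasing function and a strictly decreasing one.
\item $\psi(0) = 1$, so $\psi$ takes values in $(0,1]$.
\item $\psi \to 0$, since both summands tend to $0$.
\item $\psi \geq \tilde g/2 \geq g/4$, so $\phi = o(\psi)$.
\end{enumerate}

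\emph{The remaining difficulty.} The one property that does not yet hold is the lower bound $\psi(\xi) \geq (1+\xi)^{-1}$: the construction above only gives $\psi \geq \tfrac12(1+\xi)^{-1}$. To repair this, replace the term $\tfrac12(1+\xi)^{-1}$ by $\max\{(1+\xi)^{-1}, \cdot\}$-type adjustments, or more simply take
\[ \psi(\xi) \coloneqq \max\Bigl\{\tfrac12\tilde g(\xi) + \tfrac12(1+\xi)^{-1},\ (1+\xi)^{-1}\Bigr\}. \]
This is a maximum of two continuous strictly decreasing functions, so it is continuous and strictly decreasing. It satisfies $\psi \geq (1+\xi)^{-1}$ and $\psi(0) = 1$ by construction, and it still tends to $0$ and dominates $g/4$.

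The main point needing care is this simultaneous normalisation: keeping $\psi \leq 1$ and strictly decreasing while also enforcing the lower bound $\psi \geq (1+\xi)^{-1}$. The $\max$ construction handles it, since the maximum of strictly decreasing functions is strictly decreasing.
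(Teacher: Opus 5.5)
Your final construction is correct. Take $\psi(\xi)=\max\{\tfrac12\tilde g(\xi)+\tfrac12(1+\xi)^{-1},\,(1+\xi)^{-1}\}$, where $\tilde g$ is the piecewise-linear interpolation of the values $2^{-k}$ at the nodes $N_k$ with $\sup_{s\geq N_k}\phi(s)\leq 4^{-k}$. It is continuous and strictly decreasing as a maximum of two such functions. It satisfies $\psi(0)=1$ and $\psi\geq(1+\xi)^{-1}$. Finally, $\psi\geq\tilde g/2\geq g/4$ gives $\phi=o(\psi)$.

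The paper also interpolates the dyadic values $2^{-n}$ linearly between nodes, but it handles the lower bound differently. It chooses the nodes $\xi_n$ so that beyond $\xi_n$ both $\phi$ and $(1+\xi)^{-1}$ are at most $3^{-(n+2)}$. The bound $\psi\geq(1+\xi)^{-1}$ is then built into the node choice:
\begin{itemize}
\item on each $[\xi_n,\xi_{n+1}]$ it is immediate;
\item on $[0,\xi_1]$ it follows from convexity of $(1+\xi)^{-1}$, since both functions equal $1$ at $0$ (a check the paper leaves implicit).
\end{itemize}
Your route decouples the two requirements: the nodes only track $\phi$, and the lower bound is imposed afterwards by a maximum. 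This costs nothing, since maxima preserve continuity and strict monotonicity. The paper's version instead produces a single piecewise-affine $\psi$ in one step.

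In a write-up, present only the final construction. Delete the abandoned attempts: the normalisation with $\tilde g(0)=2$, the $\min$-truncation, and the unfinished sentence about $\tilde g/4$. Also, once $\tilde g(0)=1$, you can simply take $\psi=\max\{\tilde g,(1+\xi)^{-1}\}$.
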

\begin{proof}
    Since $\phi(\xi) \to 0$, there exists a sequence $0 < \xi_1 < \xi_2 < \dotsb$ such that $\max_{\xi\geq \xi_{n}}\{\phi(\xi),(1+\xi)^{-1}\} \leq 3^{-(n+2)}$ for all $n \in \N$. Defining the piecewise-affine function $\psi$ such that $\psi(0) = 1$ and $\psi(\xi_n) = 2^{-n}$ and such that $\psi$ is affine on $[0,\xi_1]$ and $[\xi_n,\xi_{n+1}]$ for each $n \in \N$ completes the proof.
\end{proof}

\subsection{Proof of Theorem \ref{t:pushisrajchman}}\label{sec:proofpushisrajchman} 
First, we prove Theorem~\ref{t:pushisrajchman}. We use Theorem~\ref{t:bakerbanaji} for part~\eqref{i:pushrajchman} and the Poincar\'e recurrence theorem \cite[Theorem 1.4]{Wal} for part~\eqref{i:measureofzeroset}. 
\begin{proof}[Proof of Theorem~\ref{t:pushisrajchman}]
Let $\mu$, $f$ and $Z$ be as in the statement of this theorem. We start by proving statement~\eqref{i:pushrajchman}. Consider some small $0 < r < 1$, and let $N_r$ denote the $r$-neighbourhood of $Z$.
    Let $\mathcal{I}$ be an $r$-stopping for $\mu$. 
    Let \[ \mathcal{T} \coloneqq \{ \a \in \mathcal{I} : T_{\a}([0,1]) \cap N_r = \varnothing\}. \] 
    Then  
    \begin{align}\label{e:pushisrajchmanmain}
    \begin{split}
        |\widehat{f \mu} (\xi)|\leq \sum_{\a \in \mathcal{I}} p_{\a} |\widehat{(f \circ T_{\a}) \mu} (\xi)|&=\sum_{\a \notin \mathcal{T}} p_{\a} |\widehat{(f \circ T_{\a}) \mu} (\xi)|+\sum_{\a \in \mathcal{T}} p_{\a} |\widehat{(f \circ T_{\a}) \mu} (\xi)|\\
        &\leq \mu(N_{2r})+\sum_{\a \in \mathcal{T}} p_{\a} |\widehat{(f \circ T_{\a}) \mu} (\xi)|.
    \end{split}
    \end{align}
    Applying the chain rule, it follows from the definition of $\mathcal{T}$ that for all $\a\in \mathcal{T}$ and $x \in [0,1]$ we have 
    \[ (f\circ T_{\a})''(x)=f''(T_{\a}(x))r_{\a}^{2} \neq 0.\] 
    Thus we can apply Theorem~\ref{t:bakerbanaji} to deduce that $(f\circ T_{\a})\mu$ is Rajchman for each $\a\in \mathcal{T}$, and conclude from~\eqref{e:pushisrajchmanmain} that 
    \begin{equation}\label{e:pushrajchmanboundbynbd}
    \limsup_{|\xi|\to\infty}|\widehat{f \mu} (\xi)|\leq \mu(N_{2r}). 
    \end{equation}
    But $Z$ a closed set, so by continuity of $\mu$ from above and our assumption that $\mu(Z)=0$, 
    \begin{equation}\label{e:pushrajchmannbdzero}
    \lim_{r \to 0^+} \mu(N_{2r}) = \mu(Z) = 0. 
    \end{equation}
    Thus, since $r$ was arbitrary, \eqref{e:pushrajchmanboundbynbd} and \eqref{e:pushrajchmannbdzero} give 
    \[ \limsup_{|\xi|\to\infty}|\widehat{f \mu} (\xi)| = 0, \] 
    in other words $f \mu$ is Rajchman, proving~\eqref{i:pushrajchman}. 

    We now focus on statement~\eqref{i:measureofzeroset} of this theorem. Assume that $\mu(Z)>0$ and $f'(x) \neq 0$ for all $x \in [0,1]$. 
    We start by stating a formula for the second derivative of $S_{\a}$ for any finite word $\a$. The following holds for any $\a\in \cup_{n=1}^{\infty}\A^{n}$ and $x\in [0,1]$ by an application of the chain rule and the formula $S_{a}=f\circ T_{a}\circ f^{-1}$ (which holds for each $a\in \A$): 
    \begin{equation}\label{e:secondderivconj}
        S_{\a}''(f(x)) = (f\circ T_{\a}\circ f^{-1})''(f(x)) = \frac{r_{\a}}{(f'(x))^2}\left( r_{\a} f''(T_{\a}(x)) - \frac{f'(T_{\a}(x))}{f'(x)} \cdot f''(x) \right).
    \end{equation}
    Here we have used our assumption that $f'(x) \neq 0$ for all $x \in [0,1]$. 
    It follows from~\eqref{e:secondderivconj} that if $f''(T_{\a}(x))=0$ and $f''(x)=0$ then $S_{\a}''(f(x))=0$. Thus to prove our result, it suffices to show that there exists a finite word $\a$ such that for a set of $x \in (0,1)$ of positive $\mu$ measure we have $f''(T_{\a}(x))=0$ and $f''(x)=0$. This we do below. 

    Let $(p_{a})_{a\in \A}$ be the probability vector so that $\mu$ is the self-similar measure corresponding to $\{T_{a}\}$ and $(p_{a})$. Define 
    \[ \pi \colon \A^{\N}\to \R, \qquad \pi((a_i)) \coloneqq \lim_{N\to\infty}(T_{a_{1}}\circ \cdots \circ T_{a_{N}})(0) \] 
    and let $m$ be the Bernoulli measure on $\A^{\N}$ corresponding to $(p_{a})_{a\in \A}$. It is a well known fact that $\mu=\pi m$. 
    Thus our assumption $\mu(Z)>0$ is equivalent to $m(\tilde{A})>0$ where 
    \[ \tilde{A} \coloneqq \{(a_i) \in \mathcal{A}^{\mathbb{N}} : f''(\pi((a_i))) = 0 \}. \] 
    Define the usual left shift map by 
    \[ \sigma \colon \A^{\N} \to \A^{\N}, \qquad \sigma((a_1,a_2,\dotsc)) = (a_2,a_3,\dotsc). \]
    It is well known that $\sigma$ is a measure preserving transformation with respect to $(\A^{\N},\mathcal{B},m)$ where $\mathcal{B}$ is the usual $\sigma$-algebra generated by the cylinder sets. Thus by Poincar\'{e}'s recurrence theorem \cite[Theorem~1.4]{Wal}, since $m(\tilde{A})>0$ there must exist $n\in \N$ such that $m(\tilde{A}\cap \sigma^{-n}(\tilde{A}))>0$. This in particular implies that there exists $\a=(c_{1},\dotsc,c_{n})$ such that 
    \[ m\left(\left\{(a_i)\in \tilde{A}: \exists (b_i)\in \tilde{A} \textrm{ such that } (a_i)=(c_{1},\dotsc,c_{n},b_{1},\dotsc)\right\}\right)>0. \] 
    This in turn implies that if we let 
    \[ \tilde{B} \coloneqq \{(b_{i})\in \tilde{A}:(c_{1},\dotsc, c_{n},b_{1},b_{2}, \dotsc )\in \tilde{A}\} \] 
    then $m(\tilde{B})>0$ and consequently $\mu(\pi(\tilde{B}))>0$. Let $x\in \pi(\tilde{B})$ and $(b_{i})\in \tilde{B}$ be such that $\pi((b_{i}))=x$. Then $f''(x)=f''(\pi((b_{i})))=0$ because $(b_{i})\in \tilde{A}$. Moreover, 
    \[ f''(T_{\a}(x))=f''(T_{\a}(\pi((b_{i})))=f''(\pi(c_{1},\dotsc, c_{n},b_{1},\dotsc ))=0 \] 
    because $(c_{1},\dotsc, c_{n},b_{1},\dotsc )\in \tilde{A}$. 
    Here we have used that 
    \[ T_{\a}(\pi((b_i)))=\pi((c_{1},\dotsc, c_{n},b_{1},\dotsc )) \qquad \mbox{ for all } (b_i)\in \A^{\N} \] 
    by the definition of the map $\pi$. In summary, for every $x\in \pi(\tilde{B})$ we have $f''(x)=0$ and $f''(T_{\a}(x))=0$, hence $S_{\a}''(f(x))=0$. 
    Since $\mu(\pi(\tilde{B}))>0$, this completes our proof of statement~\eqref{i:measureofzeroset}. 
\end{proof}

\subsection{Proof of Theorem \ref{t:slowzerodim}}\label{ss:provezerodim}

Next, we give a detailed proof of Theorem~\ref{t:slowzerodim}. We do this before proving Theorem~\ref{t:slowconformal} because the proof of Theorem~\ref{t:slowconformal} contains many of the same ideas but has the added complication of showing that for any composition of our contractions the zero set of the second derivative is finite. 
For the rest of this subsection we fix a function $\phi$ satisfying the assumptions of Theorem~\ref{t:slowzerodim} and let $\psi$ be the function from Lemma~\ref{lem:replacebymonotone} corresponding to $\phi$.

We first introduce a general framework for constructing our pushforward functions that will be used in the proofs of Theorems~\ref{t:slowconformal} and~\ref{t:slowzerodim}. To each frequency $\xi > 0$ we associate a number 
    \begin{equation}\label{e:relaterandxi}
    r = r(\xi) = (\psi(\xi))^{(\log \log (1/\psi(\xi)))^{-1}}. 
    \end{equation}
    Note that $r(\xi) \to 0$ as $\xi \to \infty$, and by the strict monotonicity of $\psi$, there exists $\xi^*>0$ such that $r\colon [\xi^*,\infty)\to (0,r(\xi^*)]$ is a bijection. Define $\xi^* < \xi_1 < \xi_2 < \dotsb$ (different to those in the proof of Lemma~\ref{lem:replacebymonotone}) such that $\xi_n \to \infty$ as $n \to \infty$, and growing slowly enough that
    \begin{equation}\label{e:unifctsfine}
    \frac{\xi_{n+1} - \xi_n}{\psi(\xi_{n+1})} \to 0 \qquad \mbox{ as } \qquad n \to \infty,
    \end{equation}
    and such that the corresponding values $r_1 > r_2 > \dotsb$ satisfy \begin{equation}\label{e:rclosespaced}
    r_{n+1} \geq r_{n} - e^{-1/r_n} \qquad \mbox{ for all } n. 
    \end{equation} Both $(\xi_{n})$ and $(r_{n})$ will be fixed throughout this subsection and the next. 
    
     Now, define $W \colon \RR \to \RR$ by 
     \begin{equation}\label{e:basicsmoothfn} 
    W(x) \coloneqq 
    \begin{cases} 
    e^{-1/x^2} \qquad &\mbox{ for } x > 0, \\* 
    0 \qquad &
    \mbox{ for } x \leq 0. 
    \end{cases}
    \end{equation}
    It is a standard fact that $W$ is $C^{\infty}$ but not real analytic, and all of the derivatives of $W$ vanish at $0$. Given a sequence $(c_{n})_{n=1}^{\infty}$ satisfying $c_{n}\in (0,2^{-n}]$ for all $n$, for $x \geq 0$ define 
    \begin{equation}\label{e:defineg}
    g(x) = \sum_{n=1}^{\infty} c_n W(x-r_{n+1}). 
    \end{equation}
    The function $g$ clearly depends upon $(c_{n})$ and $(r_{n})$, but we will suppress this dependence from our notation. We will eventually require $(c_{n})$ to satisfy more than just $c_{n}\in (0,2^{-n}]$ for all $n$. For $k = 0,1,2,\dotsc$ (with the convention $W^{(0)} = W$), using the fact $c_{n}\in (0,2^{-n}]$ for all $n$, we see that for all $R > 0$ we have
    \[ \sup_{x \in [0,R]} \sum_{n=N}^{\infty} c_n W^{(k)}(x-r_{n+1}) \leq \sum_{n=N}^{\infty} 2^{-n} \max_{x \in [0,R]} W^{(k)}(x) \leq 2^{-(N-1)} \max_{x \in [0,R]} W^{(k)}(x) \xrightarrow[N \to \infty]{} 0 . \] 
    We have shown that the series $\sum_{n=1}^{\infty} c_n W^{(k)}(x - r_{n+1})$ converges uniformly over $x \in [0,R]$. 
    Extending $g$ to $\R$ by letting $g(x) = -g(-x)$ for all $x < 0$, we see that $g$ becomes a well-defined $C^{\infty}$ function on $\R$. 
    In fact, we can interchange the order of differentiation and summation as many times as we like, so $g$ and all its derivatives of all orders vanish at $0$. We will define below a $C^{\infty}$ function $f$ whose second derivative is $g$. 
    The key property in this construction is that the sequence $(g(r_n))_{n=1}^{\infty}$ can be made to converge to zero as fast as we like. 
    In particular, it can converge to zero in a way that depends upon $\psi$, and also in a way such that $g(r_{n+1})$ is substantially less than $g(r_{n})$ for all $n\geq 1$ (see Lemma~\ref{lem:fixc_nandy_n}). 
    This property will be a consequence of~\eqref{e:defineg} and the flexibility we have in our choice of $(c_n)$. 

Now define $f \colon \R\to \R$ according to the rule
    \begin{equation}\label{e:definef} 
    f(x) = 
    \begin{cases} 
    \int_0^x \int_0^y g(r) dr dy \qquad \mbox{ for } x \geq 0, \\* 
    \int_{x}^{0} \int_{y}^{0} g(r) dr dy \qquad \mbox{ for } x<0. 
    \end{cases}
    \end{equation} 
    Since $g(x) = -g(-x)$ for all $x\in \R$, it follows that $f(x) = -f(-x)$ for all $x\in \R$. Moreover, by the fundamental theorem of calculus, $f$ is a $C^{\infty}$ function with $f''(x) = g(x)$ for all $x\in \R$. Note that $f$ and all of its derivatives vanish at $0$. Since $g(x)> 0$ for $x> 0$, it is clear from the definition that $f$ is strictly increasing when restricted to $[0,\infty)$. 
    Using the fact that $f(x) = -f(-x)$ for all $x\in \R$, it is clear that $f$ is also strictly increasing on $\R$. 
    We record here a property of $f$ that holds for any choice of $(c_{n})$: since $g$ is increasing, for all $x \in (0,1)$ we have 
    \begin{equation}\label{e:doubleintatmostfunc} 
    f(x) =\int_0^x \int_0^y g(r) dr dy \leq g(x) \cdot \frac{x^2}{2} < g(x). 
    \end{equation}
    
    The following lemma will eventually be used to fix our sequence $(c_{n})$, which in turn will fix the function $f$ we will use in our proof of Theorem~\ref{t:slowzerodim}. 
\begin{lem}\label{lem:fixc_nandy_n}
    Let $(c_{n})$ and $(y_{n})$ be two choices of sequences of positive real numbers defined inductively as follows. Let $y_1 = 0.01 \times \min\{1, \xi_2^{-1} \}$. 
    Now assume that we have defined $y_1,\dotsc,y_n$ and $c_1,\dotsc,c_{n-1}$ for some $n \geq 1$. 
    We then let $c_{n} \in (0, 2^{-n}]$ be small enough that 
    \begin{equation}\label{e:definec1}
    c_{n} W(r_j - r_{n+1}) \leq 2^{-(n+1-j)} y_j \qquad \mbox{ for all } j \in \{1,\dotsc,n\}. 
    \end{equation}
    Then let
    \begin{equation}\label{e:definey1}
    0 < y_{n+1} \leq 0.01 \times \min\{ \xi_{n+2}^{-1}, (c_n W(r_n-r_{n+1}))^{1/r_n} \} 
    \end{equation}
    be small enough that 
    \begin{equation}\label{e:ydoubleintegralsmall1}
    \int_0^{r_{n+1}} \int_0^y y_{n+1} dr dy \leq 0.01 \int_{r_{n+1}}^{r_n} \int_{r_{n+1}}^{y} c_n W(r-r_{n+1}) dr dy. 
    \end{equation}
    Then the following properties hold:
    \begin{enumerate}
        \item\label{i:boundbyy} For all $n\geq 1$ we have $g(r_{n})\leq y_{n}$.
        \item\label{i:gboundneighbouring} For all $n\geq 1$ we have $g(r_{n+1})\leq 0.01(g(r_{n}))^{1/r_{n}}$.
        \item\label{i:fboundneighbouring} For all $n\geq 1$ we have $f(r_{n+1})\leq 0.01 f(r_{n})$.
        \item\label{i:boundbyxi} For all $n\geq 1$ we have $\xi_{n+1}f(r_{n})\leq 0.01$.
    \end{enumerate}
\end{lem}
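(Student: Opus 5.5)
Note first that the inductive choices are possible: $W(r_j-r_{n+1})>0$ for $j\le n$, and the right-hand side of \eqref{e:ydoubleintegralsmall1} is a fixed positive number while the left-hand side is $y_{n+1}r_{n+1}^2/2$. The plan is to prove \eqref{i:boundbyy} directly from \eqref{e:definec1}, and then obtain \eqref{i:gboundneighbouring}--\eqref{i:boundbyxi} by comparing each quantity at $r_{n+1}$ with the single summand $c_nW(\cdot-r_{n+1})$, which is a lower bound at $r_n$.

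\textbf{Step 1 (item \eqref{i:boundbyy}).} Since $r_{m+1}\ge r_j$ for $m\le j-1$, every term $W(r_j-r_{m+1})$ with $m\le j-1$ vanishes. Hence
\[ g(r_j)=\sum_{m\ge j}c_mW(r_j-r_{m+1}). \]
By \eqref{e:definec1} applied with $n=m$, each term is at most $2^{-(m+1-j)}y_j$. Summing over $m\ge j$ gives $g(r_j)\le y_j\sum_{k\ge1}2^{-k}=y_j$.

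\textbf{Step 2 (item \eqref{i:gboundneighbouring}).} By Step 1 and \eqref{e:definey1},
\[ g(r_{n+1})\le y_{n+1}\le 0.01\,(c_nW(r_n-r_{n+1}))^{1/r_n}. \]
Since $g(r_n)\ge c_nW(r_n-r_{n+1})$ (the $n$-th summand, all summands being nonnegative) and $t\mapsto t^{1/r_n}$ is increasing, item \eqref{i:gboundneighbouring} follows.

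\textbf{Step 3 (item \eqref{i:fboundneighbouring}).} This is the only step requiring a little care. Because $g$ is increasing on $[0,\infty)$, we have $g\le g(r_{n+1})\le y_{n+1}$ on $[0,r_{n+1}]$. Therefore
\[ f(r_{n+1})\le\int_0^{r_{n+1}}\int_0^y y_{n+1}\,dr\,dy . \]
For the lower bound on $f(r_n)$, I restrict the double integral defining $f(r_n)$ to $r_{n+1}\le r\le y\le r_n$ and bound $g(r)\ge c_nW(r-r_{n+1})$ there, using nonnegativity of $g$ on $[0,\infty)$. This gives
\[ f(r_n)\ge\int_{r_{n+1}}^{r_n}\int_{r_{n+1}}^{y}c_nW(r-r_{n+1})\,dr\,dy. \]
Combining the two bounds with \eqref{e:ydoubleintegralsmall1} yields $f(r_{n+1})\le 0.01f(r_n)$.

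\textbf{Step 4 (item \eqref{i:boundbyxi}).} By \eqref{e:doubleintatmostfunc} and Step 1, $f(r_n)<g(r_n)\le y_n$; this uses $r_n<1$, which I may assume since $r(\xi)\to0$ and we can take $\xi^*$ large. Finally, \eqref{e:definey1} gives $y_n\le0.01\,\xi_{n+1}^{-1}$ for $n\ge2$, and the definition of $y_1$ gives the same bound for $n=1$. Hence $\xi_{n+1}f(r_n)\le 0.01$.
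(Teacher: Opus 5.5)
Your proposal is correct and follows essentially the same argument as the paper: item (1) by summing the geometric bounds from \eqref{e:definec1}, items (2) and (3) by comparing with the single summand $c_nW(\cdot-r_{n+1})$ and using that $g$ is increasing, and item (4) via \eqref{e:doubleintatmostfunc} together with the bound on $y_n$. Your extra remarks (that the inductive choices can always be made, the separate $n=1$ case of item (4), and the need for $r_n<1$ when invoking \eqref{e:doubleintatmostfunc}) are details the paper leaves implicit.
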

\begin{proof}
Statement~\eqref{i:boundbyy} is a consequence of the fact that for all $n\geq 1$, by~\eqref{e:defineg} and~\eqref{e:definec1} we have
\[g(r_n) = \sum_{k=1}^{\infty} c_{n+k-1} W(r_n - r_{n+k}) \leq \sum_{k=1}^{\infty} 2^{-k} y_n = y_n.\] 

Using Statement~\eqref{i:boundbyy}, \eqref{e:defineg} and~\eqref{e:definey1}, for all $n\geq 1$ we have \[g(r_{n+1}) \leq y_{n+1} \leq 0.01 (c_n W(r_n-r_{n+1}))^{1/r_n} \leq 0.01 (g(r_n))^{1/r_n},\] 
so Statement~\eqref{i:gboundneighbouring} holds. 

Using that $g$ is increasing together with~\eqref{e:defineg}, \eqref{e:ydoubleintegralsmall1}, and Statement~\eqref{i:boundbyy} implies the following for all $n\geq 1$:
\begin{align*}
    f(r_{n+1})= \int_0^{r_{n+1}} \int_0^y g(r) dr dy&\leq \int_0^{r_{n+1}} \int_0^y y_{n+1} dr dy\nonumber\\
    &\leq 0.01 \int_{r_{n+1}}^{r_n} \int_{r_{n+1}}^{y} c_n W(r-r_{n+1}) dr dy\nonumber\\
    &\leq 0.01 \int_{r_{n+1}}^{r_n} \int_{r_{n+1}}^{y} g(r) dr dy \nonumber\\
    &\leq 0.01 f(r_n). 
    \end{align*}
Thus Statement~\eqref{i:fboundneighbouring} holds. 

Statement~\eqref{i:boundbyxi} is a consequence of the following inequalities which follow from~\eqref{e:doubleintatmostfunc}, \eqref{e:definey1} and Statement~\eqref{i:boundbyy}:
\[ \xi_{n+1} f(r_{n}) \leq \xi_{n+1} g(r_{n}) \leq \xi_{n+1} y_{n} \leq 0.01. \qedhere \]
\end{proof}

Equipped with this lemma we can now prove Theorem~\ref{t:slowzerodim} using Theorem~\ref{t:bakerbanaji} and Theorem~\ref{t:pushisrajchman}~\eqref{i:pushrajchman}. 
\begin{proof}[Proof of Theorem~\ref{t:slowzerodim}]
    Let $(c_{n})$ and $(y_n)$ be defined as in Lemma~\ref{lem:fixc_nandy_n}, and let $g$ and $f$ be the functions corresponding to $(c_{n})$ defined by~\eqref{e:defineg} and~\eqref{e:definef}. 
    Let $\mu$ be any self-similar measure whose support contains $0$ and let $s_M$ be the constant from Proposition~\ref{prop:frostman} associated to $\mu$. Then for all $r\in (0,1)$, 
    \begin{equation}\label{e:bitnearzero} 
    f\mu((-f(r),f(r)) = \mu((-r,r)) \gtrsim r^{s_M}. 
    \end{equation}
    Moreover, 
    \begin{equation}\label{e:middlebit}
    \mu((r,r+2r^{4s_M/s_F})) + \mu((-r-2r^{4s_M/s_F},-r)) \lesssim r^{4s_M}.
    \end{equation} 
       For $r\in(0,1)$ let $\mathcal{I}_{r}$ be a $r^{4s_M/s_F}$-stopping for $\mu$. Let 
    \begin{equation}\label{e:defineoutercutset} 
    \mathcal{S}_r \coloneqq \{ \a \in \mathcal{I}_{r} : S_{\a}([0,1]) \cap (-r-r^{4s_M/s_F},r+r^{4s_M/s_F}) = \varnothing \}. 
    \end{equation}
    By Theorem~\ref{t:bakerbanaji} and the equality $(f\circ S_{\a})'' = r_{\a}^{2} (g\circ S_{\a})$, which follows by the chain rule and the fact $f''=g$, there is $\eta > 0$ depending only on $\mu$ and $f$ such that for all $r>0$, $\a \in \mathcal{S}_r$, $\xi \in \R$, 
    \begin{equation}\label{e:outerbit}
    |\widehat{(f\circ S_{\a})\mu}(\xi)| \lesssim \left(\min_{|x| \geq r+r^{4s_M/s_F}}|g(x)| \right)^{-1} r^{-8s_M/s_F} |\xi|^{-\eta}. 
    \end{equation}

    Now let $n \geq 1$ and let $k_n$ denote the largest positive integer such that 
    \begin{equation}\label{e:0.01bound}
    \xi_{k_n} f(r_n) \leq 0.01. 
    \end{equation}
    It follows from Statement~\eqref{i:boundbyxi} from Lemma~\ref{lem:fixc_nandy_n} that $k_{n-1} \geq n$
    for all $n$ sufficiently large. 
    If $n$ is sufficiently large then $\xi_{k_n + 1} < \xi_{k_n} + 1 < 2\xi_{k_n}$, so Statement~\eqref{i:fboundneighbouring} from Lemma~\ref{lem:fixc_nandy_n} implies $k_{n+1} > k_n$ for $n$ sufficiently large. 
    Let $m \in \N$ be such that $k_{n-1} < m \leq k_n$. 
    Then by~\eqref{e:0.01bound} and the fact that $x \mapsto e^{2 \pi i x}$ is Lipschitz with Lipschitz constant $2 \pi$, 
    \begin{align*}
    \biggl|\int_{-{r_n}}^{r_n} e^{2\pi i \xi_m f(x)} d\mu(x) - \mu((-r_n,r_n)) \biggr| &= \biggl| \int_{-{r_n}}^{r_n} (e^{2\pi i \xi_m f(x)} - 1) d\mu(x) \biggr| \\
    &\leq \mu((-r_n,r_n)) \sup_{-r_n \leq x \leq r_n} |e^{2\pi i \xi_m f(x)} - 1|  \\
    &\leq 0.01 \times 2\pi \mu((-r_n,r_n)). 
    \end{align*}
 Rearranging gives 
    \begin{equation}\label{e:easypropertyoff}
    \biggl|\int_{-{r_n}}^{r_n} e^{2\pi i \xi_m f(x)} d\mu(x) \biggr| \geq \frac{1}{2} \mu((-r_n,r_n)). 
    \end{equation} 
    By~\eqref{e:rclosespaced}, we know that 
    \begin{equation}
        \label{e:rspacing2} 
        r_{n-2} - r_n \leq 2e^{-1/r_{n-2}}\leq \frac{r_{n-2}^{4s_M/s_F}}{2} \leq r_{n}^{4s_M/s_F}
    \end{equation}
    for all $n$ sufficiently large. In the final inequality of~\eqref{e:rspacing2} we used that $\lim_{n\to\infty}\frac{r_{n+1}}{r_{n}}=1$, which follows from~\eqref{e:rclosespaced}. 
    Therefore, for all $n$ sufficiently large, for all $|x| \geq r_n + r_n^{4s_M/s_F}$, by Statement~\eqref{i:gboundneighbouring} from Lemma~\ref{lem:fixc_nandy_n}, \eqref{e:doubleintatmostfunc}, \eqref{e:rspacing2} and since $m > k_{n-1}$, we have
    \begin{equation}\label{e:secondderivboundpushforward}
        |g(x)| \geq g(r_{n-2}) \geq (g(r_{n-1}))^{r_{n-2}} \geq (f(r_{n-1}))^{r_{n-2}} \gtrsim \xi_m^{-r_{n-2}} = \xi_m^{-o_n(1)}. 
    \end{equation}
   Moreover, using that $m > k_{n-1} \geq n$,~\eqref{e:relaterandxi} and that $\psi(\xi) \geq (1+\xi)^{-1}$ we have 
    \begin{equation}\label{e:rboundpushforward}
    r_n^{-8s_M/s_F} = (\psi(\xi_n))^{-8s_M (\log \log (1/\psi(\xi_n)))^{-1}/s_F} \leq (\psi(\xi_m))^{-8s_M (\log \log (1/\psi(\xi_n)))^{-1}/s_F} \lesssim \xi_m^{o_n(1)}.
    \end{equation}
    Plugging~\eqref{e:secondderivboundpushforward} and~\eqref{e:rboundpushforward} into~\eqref{e:outerbit}, and then using $\psi(\xi) \geq (1+\xi)^{-1}$ and~\eqref{e:relaterandxi}, gives
    \begin{equation}\label{e:farawaypieces}
    \max_{\a \in \mathcal{S}_{r_n}} |\widehat{(f\circ S_{\a})\mu}(\xi_m)| \lesssim \xi_m^{-\eta/2} \lesssim (\psi(\xi_m))^{\eta/2} \lesssim r_m^{4s_M} \lesssim r_n^{4s_M}.
    \end{equation}
    
    Now, using the definition of the Fourier transform and piecing together~\eqref{e:bitnearzero}, \eqref{e:middlebit}, \eqref{e:farawaypieces} gives that 
    \begin{align*}
    |\widehat{f\mu}(\xi_m)| &\gtrsim \biggl|\int_{-{r_n}}^{r_n} e^{2\pi i \xi_m f(x)} d\mu(x)\biggr| \\*
    &\phantom{----} - \mu((r_n,r_n+2r_n^{4s_M/s_F}) \cup (-r_n-2r_n^{4s_M/s_F},-r_n)) \\*
    &\phantom{----} - \sum_{\a \in \mathcal{S}_{r_n}} p_{\a} |\widehat{(f\circ S_{\a})\mu}(\xi_m)| \\
    &\gtrsim \mu((-r_n,r_n)) /2 - r_n^{4s_M} - \max_{\a \in \mathcal{S}_{r_n}} |\widehat{(f\circ S_{\a})\mu}(\xi_m)| &\text{by~\eqref{e:easypropertyoff}, \eqref{e:middlebit}} \\ 
    &\gtrsim r_n^{s_M} - r_n^{4s_M} -  r_n^{4s_M}  &\text{by~\eqref{e:bitnearzero}, \eqref{e:farawaypieces}} \\ 
    &\gtrsim r_n^{s_M} \gtrsim \psi(\xi_n) \gtrsim \psi(\xi_m),
    \end{align*}
    where the last line holds for all $n$ sufficiently large using~\eqref{e:relaterandxi}. 
    
    In summary we have shown that $|\widehat{f\mu}(\xi_m)|  \gtrsim \psi(\xi_m)$ for all $m$ sufficiently large. Since $f \mu$ is a compactly supported Borel probability measure, its Fourier transform is Lipschitz continuous, so by~\eqref{e:unifctsfine}, the inequality $|\widehat{f\mu}(\xi_m)|  \gtrsim \psi(\xi_m)$ holding for all $m$ sufficiently large implies that in fact $|\widehat{f\mu}(\xi)| \gtrsim \psi(\xi)$ for all $\xi$ sufficiently large. 
    Since $\phi = o(\psi)$, for all $\xi$ sufficiently large we have $|\widehat{f\mu}(\xi)| \geq \phi(\xi)$. 
    If $\xi$ is negative and sufficiently large then a similar proof gives that $|\widehat{f\mu}(\xi)| \geq \phi(-\xi)$, so we have proved~\eqref{e:zerodimliminf}. 
    By Theorem~\ref{t:pushisrajchman}~\eqref{i:pushrajchman}, $f\mu$ is Rajchman. 
    This completes the proof. 
\end{proof}

\subsection{Proof of Theorem \ref{t:slowconformal}}\label{ss:proof-slow-conformal}
The proof of Theorem~\ref{t:slowconformal} has many similar features to the proof of Theorem~\ref{t:slowzerodim} but also some differences. To avoid repetition between our proofs we will on occasion not give quite as many details as in the proof of Theorem~\ref{t:slowzerodim}. 
Throughout this subsection we again fix a function $\phi$ satisfying the assumptions of Theorem~\ref{t:slowconformal} and let $\psi$ be the function from Lemma~\ref{lem:replacebymonotone} corresponding to $\phi$. 
Now and for the proof of Proposition~\ref{p:slowpushforward} below, we let the sequences $(\xi_{n})$ and $(r_{n})$ be as in the previous subsection using the relation~\eqref{e:relaterandxi} involving $\psi$, so that~\eqref{e:unifctsfine} and~\eqref{e:rclosespaced} are satisfied.
The function $g\colon \R\to \R$ will be defined for $x \geq 0$ as in~\eqref{e:defineg} where $(c_{n})_{n=1}^{\infty}$ is some sequence satisfying $c_{n}\in (0,2^{-n}]$ for all $n$. 
For $x<0$ we let $g(x) = -g(-x)$. The definition of $f \colon \R\to \R$, however, is slightly different: we now set 
    \begin{equation}\label{e:seconddefinef} 
    f(x) = 
    \begin{cases} 
    x + \int_0^x \int_0^y g(r) dr dy \qquad \mbox{ for } x \geq 0, \\* 
    x + \int_{x}^{0} \int_{y}^{0} g(r) dr dy \qquad \mbox{ for } x<0. 
    \end{cases}
    \end{equation}
    In this context we have the following analogue of~\eqref{e:doubleintatmostfunc} which follows by the same argument: for $x\in[0,1]$ we have 
    \begin{equation}
        \label{e:f-xatmostg}
        f(x)-x\leq g(x).
    \end{equation}
    
   The following proposition provides sufficient conditions for the sequence $(c_{n})$ to ensure that the Rajchman property holds and we have our desired slow Fourier decay. 
   Crucially though, there is a degree of flexibility in how we choose this sequence; this flexibility will be exploited when we prove Theorem~\ref{t:slowconformal}.
\begin{prop}
    \label{p:slowpushforward}
    Let $(c_{n})$ and $(y_{n})$ be two choices of sequences of positive real numbers defined inductively as follows. Let $y_1 = 0.01 \times \min\{1, \xi_2^{-1} \}$. 
    Assume that we have defined $y_1,\dotsc,y_n$ and $c_1,\dotsc,c_{n-1}$ for some $n \geq 1$. 
    We then let $c_{n} \in (0, 2^{-n}]$ be small enough that 
    \begin{equation}\label{e:definec2}
    c_{n} W(r_j - r_{n+1}) \leq 2^{-(n+1-j)} y_j \qquad \mbox{ for all } j \in \{1,\dotsc,n\}. 
    \end{equation}
    Then let $y_{n+1}$ be sufficiently small that 
    \begin{equation}\label{e:definey2}
    0 < y_{n+1} \leq 0.01 \times \min\{ \xi_{n+1}^{-1}, (c_n W(r_n-r_{n+1}))^{1/r_n}, r_{n+1}^{1/r_{n+1}} \}. 
    \end{equation}
     For such a choice of $(c_{n})$ and $(y_{n})$ the following properties hold:
    \begin{enumerate}
       \item\label{i:conditionsonf} $f$ is $C^{\infty}$ and strictly increasing with $f(0) = f''(0) = 0$, $f'(0) = 1$, $f'(x) \in [1,2]$ for all $x \in [0,1]$, and $f''(x) > 0$ for all $x \in (0,1]$. 
    \item\label{i:measbaseb} For every $b \geq 3$ and every proper subset $D \subset \{0,1,\dotsc, b-1 \}$ containing $0$ and at least one other digit, if $\mu$ is the base-$b$ missing digit measure corresponding to the IFS $\{x \mapsto \frac{x+d}{b}\}_{d\in D}$ and the uniform probability vector, then $f \mu$ is Rajchman, but there is some strictly increasing sequence of positive integers $(k_{n})_{n\in \N}$ and some $j\in \Z$ such that 
\[ |\widehat{f \mu}(jb^{k_n})| \geq \phi(jb^{k_n}) \quad \mbox{ for all } n \in \N. \]
\end{enumerate}
\end{prop}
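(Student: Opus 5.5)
Part~\eqref{i:conditionsonf} is routine and I would dispose of it first. As in Section~\ref{ss:provezerodim}, $g$ is $C^{\infty}$, odd, vanishes to all orders at $0$, is strictly positive and increasing on $(0,\infty)$, and satisfies $g\leq \sum_n 2^{-n}W\leq 1$ on $[0,1]$. From~\eqref{e:seconddefinef} we get $f''=g$, $f'(x)=1+\int_0^{|x|}g$, so $f'(0)=1$ and $f'(x)\in[1,2]$ on $[0,1]$. Moreover $f$ is strictly increasing, $f(0)=f''(0)=0$, and $f''>0$ on $(0,1]$. I would also record that Lemma~\ref{lem:fixc_nandy_n}\eqref{i:boundbyy},\eqref{i:gboundneighbouring} hold verbatim, since~\eqref{e:definec2},\eqref{e:definey2} contain the same constraints. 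In addition, $g(r_n)\leq y_n\leq 0.01\,r_n^{1/r_n}$ and $g(r_n)\leq y_n\leq 0.01\,\xi_n^{-1}$.

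For the Rajchman property in part~\eqref{i:measbaseb}, $\mu$ is a missing digit measure, so it is self-similar and non-atomic. The zero set of $f''$ on $[0,1]$ is $\{0\}$, which has $\mu$-measure zero. Theorem~\ref{t:pushisrajchman}\eqref{i:pushrajchman} then gives that $f\mu$ is Rajchman.

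For slow decay I take $j=1$ and frequencies $\xi=b^{k}$, splitting $\mu$ into three regions as in the proof of Theorem~\ref{t:slowzerodim}.

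\emph{Near $0$.} On $[0,r_n]$ write $f(x)=x+h(x)$ with $0\leq h(x)\leq g(x)\leq g(r_n)$ by~\eqref{e:f-xatmostg}. Then
\[ \Bigl|\int_{[0,r_n]}e^{2\pi i b^k f}d\mu-\int_{[0,r_n]}e^{2\pi i b^k x}d\mu\Bigr|\leq 2\pi b^k g(r_n)\mu([0,r_n]). \]
Take $r_n\approx b^{-\ell_n}$ adjusted to a cylinder $[0,b^{-\ell_n}]$, or more simply localise $\mu$ to the cylinder $T_0^{\ell}[0,1]$. Since $0\in D$, this cylinder has mass $|D|^{-\ell}$. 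Because $\mu$ is non-Rajchman along $b^k$, as is standard for $b$-adic missing digit measures where $\widehat{\mu}(b^k)=\widehat{\mu}(1)\neq0$ after rescaling, we get $\bigl|\int_{T_0^\ell[0,1]}e^{2\pi i b^k x}d\mu\bigr|=|D|^{-\ell}|\widehat{\mu}(b^{k-\ell})|$. The aim is a lower bound $\gtrsim |D|^{-\ell}$ for suitable $k>\ell$. To ensure $|\widehat\mu(b^m)|$ is bounded below I would choose $j$ with $\widehat\mu(j)\neq 0$, which is possible since $\widehat\mu(jb^m)=\widehat\mu(j)$ for integers $j$ and $\mu$ is not Lebesgue. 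This is why the statement has $jb^{k_n}$. The error term is $\lesssim jb^k g(r_n)|D|^{-\ell}$, and it is small once $b^k\leq \xi_{m}$ with $g(r_n)\xi_m\leq 0.01$, using~\eqref{e:definey2}.

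\emph{Middle annulus.} The region $(r_n,r_n+2r_n^{4s_M/s_F})$ has mass $\lesssim r_n^{4s_M}$ by Proposition~\ref{prop:frostman}.

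\emph{Far region.} On the far region I would apply Theorem~\ref{t:bakerbanaji} to the pieces $f\circ S_{\a}$ of an $r_n^{4s_M/s_F}$-stopping. The family of all such $f\circ S_{\a}$ has uniformly bounded first and second derivatives, thanks to $f'\in[1,2]$, and $(f\circ S_\a)''=r_\a^2\, g\circ S_\a$. Since $g(r_{n-2})\geq g(r_{n-1})^{r_{n-2}}$ and $g(r_{n-1})\gtrsim \xi_m^{-1}$ via the choice of $k$, the far region contributes $\lesssim \xi_m^{-\eta/2}\lesssim r_n^{4s_M}$ exactly as in~\eqref{e:farawaypieces}.

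The main obstacle is synchronising the three scales: the $b$-adic cylinder scale $b^{-\ell}$, the radius $r_n$, and the frequency $jb^{k}$, which must lie in a window $[\xi_{k_{n-1}},\xi_{k_n}]$ where $jb^{k}g(r_n)\leq 0.01$ but $g(r_{n-1})\gtrsim (jb^k)^{-1}$. Because $g(r_{n+1})\leq 0.01\,g(r_n)^{1/r_n}$, this window spans enormously many multiplicative scales, so it contains a power $jb^{k_n}$. I would also pick $\ell$ with $b^{-\ell}\leq r_n<b^{-\ell+1}$, so that the mass is $|D|^{-\ell}\gtrsim r_n^{s_M}$. The final bound is $|\widehat{f\mu}(jb^{k_n})|\gtrsim r_n^{s_M}\gtrsim\psi(\xi_n)\geq\psi(jb^{k_n})\gg\phi(jb^{k_n})$, using~\eqref{e:relaterandxi} and the monotonicity of $\psi$. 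Taking $n$ large and passing to a subsequence gives strictly increasing $k_n$.
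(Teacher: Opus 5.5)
Your architecture matches the paper's: Rajchman via Theorem~\ref{t:pushisrajchman}\eqref{i:pushrajchman} with $Z=\{0\}$; a $j$ with $c_\mu=\widehat\mu(j)\neq 0$, so that $\widehat\mu(jb^m)=c_\mu$; the near part compared with the linear phase; a Frostman annulus; and the far cylinders handled by Theorem~\ref{t:bakerbanaji}, with the lower bound on $g$ coming from property~(B), i.e.\ $g(r_{n+1})\le 0.01\,g(r_n)^{1/r_n}$. However, the step you flag as the main obstacle does not close as written.

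You take the near part to be the cylinder $[0,b^{-\ell}]$ and choose $\ell$ only so that $b^{-\ell}\le r_n<b^{-\ell+1}$. Then the set $(b^{-\ell},r_n]$ lies in none of your three regions. It is outside the cylinder, below the annulus $(r_n,r_n+2r_n^{4s_M/s_F})$, and below the far region $|x|\ge r_n+r_n^{4s_M/s_F}$. Its $\mu$-mass can be as large as $(|D|-1)|D|^{-\ell}$, since it may contain the whole level-$\ell$ cylinders $[db^{-\ell},(d+1)b^{-\ell}]$ with $d\in D\setminus\{0\}$. Your main term is only $|c_\mu|\,|D|^{-\ell}$ with $|c_\mu|\le 1$, so a trivial bound on this piece destroys the lower bound. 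If instead you take the near part to be $[0,r_n]$, your exact cylinder formula $|D|^{-\ell}\widehat\mu(jb^{k-\ell})$ is no longer available.

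There are two ways to repair this.
\begin{itemize}
\item[(i)] Keep the near part $[0,r_n]$ and use that $jb^kx \bmod 1$ depends only on the digits of $x$ beyond position $k$. Every level-$k$ cylinder $[a,a+b^{-k}]\subset[0,r_n]$ then contributes exactly $c_\mu\,\mu([a,a+b^{-k}])$, because $jb^ka\in\Z$. Hence $\int_{[0,r_n]}e^{2\pi i jb^kx}\,d\mu=c_\mu\,\mu([0,r_n])+O(|D|^{-k})$, and $\mu([0,r_n])\gtrsim r_n^{s_\mu}$ by Proposition~\ref{prop:frostman}. Everything then stays anchored at $r_n$ exactly as in Theorem~\ref{t:slowzerodim}, which is slightly more streamlined than the paper.
\item[(ii)] Match the scales tightly, as the paper does. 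By \eqref{e:rclosespaced} there are consecutive radii with $b^{-n}\le r_{l_n+1}<r_{l_n}\le b^{-n}+b^{-4n}$. So the cylinder $[0,b^{-n}]$, the annulus $(b^{-n},b^{-n}+2b^{-4n})$ and the far region $|x|\ge b^{-n}+b^{-4n}$ fit together, with $g\ge g(b^{-n})^{r_{l_n}}$ on the far region. The paper takes $k_n$ maximal with $jb^{k_n}(f(b^{-n})-b^{-n})\le 0.01|c_\mu|$, which is the top of your window. That makes the far bound immediate, but it then needs a separate argument (via $\xi_{j_n}\le jb^{k_n}$) to get $\psi(jb^{k_n})<b^{-s_\mu n}$.
\end{itemize}

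Separately, your final chain $\psi(\xi_n)\ge\psi(jb^{k_n})$ needs $jb^{k_n}\ge\xi_n$, which you do not justify. It does not follow from $k_{n-1}\ge n$ as in Theorem~\ref{t:slowzerodim}, because here \eqref{e:definey2} only gives $y_{n+1}\le 0.01\,\xi_{n+1}^{-1}$. It does follow from~(B): $1/g(r_{n-1})\ge 100\,(100\,\xi_{n-2})^{1/r_{n-2}}\gg\xi_n$.
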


\begin{proof}
    Using that $g$ is increasing, $0\leq W(x)\leq 1$ for all $x\in[0,1]$, and $c_{n}\in (0,2^{-n}]$ for all $n$, we see that for all $x \in [0,1]$ we have 
    \[ 1 \leq f'(x) = 1 + \int_0^x g(r) dr \leq 1 + g(x) = 1+\sum_{n=1}^{\infty} c_n W(x-r_{n+1})\leq 1+\sum_{n=1}^{\infty} c_n \leq 2. \] 
    We also have $f''(x) = g(x)$ for all $x$ by definition.  We see now that our $f$ satisfies the requirements of~\eqref{i:conditionsonf}. We remark that using~\eqref{e:definec2}, \eqref{e:definey2} and duplicating the arguments used in the proof of Lemma~\ref{lem:fixc_nandy_n}, it is straightforward to show that the following properties hold:
    \begin{enumerate}
        \item[(A)] For all $n\geq 1$ we have $g(r_{n})\leq y_{n}$. 
        \item[(B)] For all $n\geq 1$ we have $g(r_{n+1})\leq 0.01(g(r_{n}))^{1/r_{n}}$.
    \end{enumerate}

    Now, let $\mu$ be a base-$b$ missing digit measure as in~\eqref{i:measbaseb}, and let $s_{\mu} \coloneqq \frac{\log \# \mathcal{D}}{\log b}$. The measure $\mu$ has the additional property that $s_{\mu}=s_{F}=s_{M}$ where $s_{F}$ and $s_{M}$ are as in Proposition~\ref{prop:frostman}. 
    Since $D \neq \{0,1,\dotsc, b-1 \}$, $\mu$ is not the Lebesgue measure restricted to $[0,1]$, so there must exists $j\in \mathbb{Z}$ such that $\widehat{\mu}(j)\in \C\setminus\{0\}$. 
    Using that $\mu = T_b \mu$ where $T_b(x) = bx \mod 1$, we have 
    \[ \widehat{\mu}(j) = \widehat{\mu}(jb) = \widehat{\mu}(jb^2) = \dotsb. \]
    We denote this common value by $c_{\mu}$.  
    For each positive integer $n$, let $k_n$ denote the largest positive integer such that 
    \begin{equation}\label{e:makecloseatpower}
    j\cdot b^{k_n}(f(b^{-n}) - b^{-n}) \leq 0.01 |c_{\mu}| .
    \end{equation}
    By Statement~(A) and~\eqref{e:definey2} we have $g(r_n) \leq y_n \leq r_n^{1/r_n}$. Using the fact that $\lim_{n\to\infty}\frac{r_{n+1}}{r_{n}}=1$ and $g(r_n)\leq r_n^{1/r_n}$, it can be shown that for all $m \geq 1$ and all $x>0$ sufficiently small, we have $g(x) \leq x^{m+1}$. Thus by~\eqref{e:f-xatmostg}, for all $m\in \N$ we have 
    \[ 0 < \frac{f(b^{-n}) - b^{-n}}{b^{-mn}} \leq \frac{g(b^{-n})}{b^{-nm}} \to 0. \]
    This means that $n/k_n \to 0$ as $n \to \infty$. 
    In particular, for all sufficiently large $n$ we have $k_n > n$. 
    Using this inequality together with the fact $x \mapsto e^{2\pi ix}$ is $\mathbb{Z}$-periodic, we see that for $n$ sufficiently large the random variables $x \mapsto \chi_{[0,b^{-n}]}(x)$ and $x \mapsto e^{-2\pi i xjb^{k_{n}}}$ are independent with respect to $\mu$. Thus
    \begin{equation}\label{e:smallpieceatpower}
    \int_{0}^{b^{-n}} e^{-2 \pi i xj b^{k_n}} d \mu(x) =\int_{0}^{1}\chi_{[0,b^{-n}]}(x) e^{-2 \pi i x jb^{k_n}} d \mu(x)= b^{-s_{\mu} n} c_{\mu} 
    \end{equation} 
    for all $n$ sufficiently large. In the above we have used that $\mu([0,b^{-n}]) = b^{-s_{\mu}n}$, which is the case since $\mu$ is a missing digit measure whose alphabet contains the digit $0$. 
    Therefore 
    \begin{align*}
    \Big| \int_{0}^{b^{- n}} e^{2 \pi i f(x) jb^{k_n}} d \mu(x) - b^{-s_{\mu} n} c_{\mu}  \Big| &\leq \int_{0}^{b^{-n}} |e^{2 \pi i f(x)j b^{k_n}} - e^{2 \pi i x jb^{k_n}} | d\mu(x) &\text{by~\eqref{e:smallpieceatpower}} \\ 
    &\leq b^{-s_{\mu} n} \sup_{x \in [0,b^{-n}]} |e^{2 \pi i f(x)j b^{k_n}} - e^{2 \pi i xj b^{k_n}} | \\
    &\leq b^{-s_{\mu}n}2\pi jb^{k_{n}}\max_{x\in [0,b^{-n}]}|f(x)-x|\\
    &\leq 2\pi \times 0.01 |c_{\mu}| b^{-s_{\mu} n}  &\text{by~\eqref{e:makecloseatpower}}.
    \end{align*}
   In the final line we used that the function $f(x)-x$ restricted to $[0,b^{-n}]$ is maximised at $b^{-n}$, which is a consequence of~\eqref{e:seconddefinef}. The above implies that
    \begin{equation}\label{e:conformalbitnearzero} 
    \Big| \int_{0}^{b^{-n}} e^{2 \pi i f(x) jb^{k_n}} d \mu(x) \Big| \geq 0.9 b^{-s_{\mu} n} |c_{\mu}| 
    \end{equation}for all $n$ sufficiently large.

    For $n\geq 1$ let 
    \begin{equation}\label{e:defineoutercutset2} 
    \mathcal{S}_{b^{-n}} \coloneqq \{ \a \in {\mathcal{D}}^{4n} : S_{\a}([0,1]) \cap (-b^{-n}-b^{-4n},b^{-n}+b^{-4n}) = \varnothing \}. 
    \end{equation} 
    It is a consequence of~\eqref{e:rclosespaced} that for $n$ sufficiently large there exists $l_{n}\in \N$ such that 
    \[ b^{-n}\leq r_{l_{n}+1}<r_{l_{n}}\leq b^{-n}+b^{-4n}.\] 
    Using these inequalities together with the fact $g$ is increasing, Statement~(B), \eqref{e:f-xatmostg} and the definition of $k_{n}$, we see that the following inequalities hold for all $n$ sufficiently large: 
    \begin{equation}\label{e:gbound}
    \min_{|x|\geq b^{-n}+b^{-4n}}|g(x)|\geq g(r_{l_{n}})\geq (g(r_{l_{n}+1}))^{r_{l_{n}}}\geq (g(b^{-n}))^{{r_{l_{n}}}}\geq (f(b^{-n})-b^{-n})^{r_{l_{n}}}\gtrsim b^{-k_{n}\cdot o_{n}(1)}.
    \end{equation}
    A similar application of Theorem~\ref{t:bakerbanaji} as in~\eqref{e:outerbit} and~\eqref{e:farawaypieces}, this time also using~\eqref{e:gbound}, yields that for some $\eta > 0$ depending only on $\mu$ and $f$ the following holds for all $n$ sufficiently large: 
    \begin{align}\label{e:conformalapplybb}
    \begin{split}
    \sum_{\a \in \mathcal{S}_{b^{-n}}} p_{\a} |\widehat{(f\circ S_{\a})\mu}(jb^{k_n})| &\leq \max_{\a \in \mathcal{S}_{b^{-n}}} |\widehat{(f\circ S_{\a})\mu}(jb^{k_n})| \\
    &\lesssim \left(\min_{|x| \geq b^{-n} + b^{-4n}}|g(x)| \right)^{-1} b^{8 n} b^{-\eta k_n} \\
    &\lesssim b^{-\eta k_n / 2} \\
    &\leq b^{-4 s_{\mu} n}
    \end{split}
    \end{align}
    (the final two inequalities hold because $n = o(k_n)$). 
    
    Next, we aim for an upper bound on $\psi(jb^{k_n})$ in terms of $n$. 
    Let $j_n$ be the largest integer such that $\xi_{j_n} \leq jb^{k_n}$ (note that $\xi_{j_n} > j b^{k_n - 1}$ for $n$ large enough), so 
    \[ \xi_{j_n} (f(b^{-n}) - b^{-n}) > b^{-2} j b^{k_n + 1} (f(b^{-n}) - b^{-n}) > 0.01 b^{-2} |c_{\mu}| \] 
    for all $n$ sufficiently large, where we used~\eqref{e:makecloseatpower} for the final inequality. 
    By~\eqref{e:f-xatmostg} this implies $\xi_{j_n} g(b^{-n}) > 0.01 b^{-2} |c_{\mu}|$ for $n$ large enough. 
    By~\eqref{e:rclosespaced}, for $n$ sufficiently large there exists $l_{n}^{*}\in \N$ such that 
    \[ b^{-n}\leq r_{l_{n}^{*}+1}\leq r_{l_{n}^*}\leq b^{-(n-1)}. \]
    Using these inequalities together with Property~(B) and the fact that $g$ is increasing, we have
    \begin{align}\label{e:biggerthan1}
    \begin{split}
    \xi_{j_n} g(b^{-(n-1)})\geq \xi_{j_n} g(r_{l_{n}^*})\geq \xi_{j_n} (g(r_{l_{n}^{*}+1}))^{r_{l_{n}^*}} &\geq \xi_{j_n} (g(b^{-n}))^{r_{l_{n}^*}} \\
    &\geq (g(b^{-n}))^{r_{l_{n}^*}-1}0.01 b^{-2} |c_{\mu}| \\
    &> 1
    \end{split}
    \end{align}
    for $n$ sufficiently large. 
    In the final inequality we used that $\lim_{n\to\infty}(g(b^{-n}))^{r_{l_{n}^*}-1} = \infty$. 
    By Property~(A) and~\eqref{e:definey2} we have $\xi_{j_n}g(r_{j_n}) \leq \xi_{j_n} y_{j_n} < 1$ for all $n$. 
    Therefore by~\eqref{e:biggerthan1} and since $g$ is increasing we have $r_{j_n} < b^{-(n-1)}$ for $n$ sufficiently large. 
    Using this inequality together with~\eqref{e:relaterandxi}, $\xi_{j_{n}}\leq jb^{k_{n}}$ and the monotonicity of $\psi$ we have 
    \[ b^{-(n-1)} > r_{j_n} = (\psi(\xi_{j_n}))^{(\log \log (1/\psi(\xi_{j_n})))^{-1}} \geq (\psi(jb^{k_n}))^{1/(2s_{\mu})} \] for all $n$ sufficiently large. Rearranging, we see that $\psi(jb^{k_n}) < b^{-2 s_{\mu} (n-1)}$ for $n$ sufficiently large. This in turn implies that when $n$ is large enough we have
    \begin{equation}\label{e:psibbound}
        \psi(jb^{k_n}) < b^{- s_{\mu} n}.
    \end{equation}

    Now, using~\eqref{e:conformalbitnearzero} and~\eqref{e:conformalapplybb}, a similar computation to the main calculation from the proof of Theorem~\ref{t:slowzerodim} gives the following for $n$ sufficiently large: 
    \begin{align*}
        |\widehat{f\mu}(jb^{k_n})| &\gtrsim \Big| \int_{0}^{b^{-n}} e^{2 \pi i f(x) jb^{k_n}} d \mu(x) \Big| - \mu((b^{-n},b^{-n}+2b^{-4 n})) \\* 
        &\phantom{\gtrsim} - \sum_{\a \in \mathcal{S}_{b^{-n}}} p_{\a} |\widehat{(f\circ S_{\a})\mu}(jb^{k_n})| \\
        &\gtrsim b^{-s_{\mu} n} - b^{-4s_{\mu} n} - b^{-4 s_{\mu} n} \\
        &\gtrsim b^{-s_{\mu} n} \\
        &> \psi(jb^{k_n}),
    \end{align*}
    where the last line was~\eqref{e:psibbound}. 
    We have shown that for all $n \in \N$ sufficiently large, $|\widehat{f\mu}(jb^{k_n})| \gtrsim \psi(jb^{k_n})$, so for all $n$ large enough, $|\widehat{f\mu}(jb^{k_n})| \geq \phi(jb^{k_n})$. 
    Of course, by Theorem~\ref{t:pushisrajchman}~\eqref{i:pushrajchman}, $f\mu$ is Rajchman, completing the proof. 
\end{proof}

Next, we prove Theorem~\ref{t:slowconformal} using Proposition~\ref{p:slowpushforward} and Lemma~\ref{lem:pushforwardconformal}. 
\begin{proof}[Proof of Theorem~\ref{t:slowconformal}]
Let $\phi\colon [0,\infty)\to (0,1]$ be a function satisfying $\lim_{\xi\to\infty}\phi(\xi) = 0$. We let $\tilde{\phi}\colon [0,\infty)\to (0,1]$ be another function also satisfying $\lim_{\xi\to\infty}\tilde{\phi}(\xi) = 0$ and also the property that for all $c>0$ there exists $\xi_c > 0$ such that for all $\xi \geq \xi_c$ we have $\tilde{\phi}(\xi)\geq \phi(c\xi)$. For example, we can take $\tilde{\phi}(\xi) = \psi(\sqrt{\xi})$, where $\psi$ is the function from Lemma~\ref{lem:replacebymonotone} corresponding to $\phi$. 
Throughout the proof of Theorem~\ref{t:slowconformal}, we use the relation $r = r(\xi) = (\psi(\xi))^{(\log \log (1/\tilde{\phi}(\xi)))^{-1}}$ in place of~\eqref{e:relaterandxi}, and fix the sequences $(\xi_{n})$ and $(r_{n})$ accordingly, so $\lim_{n \to \infty} \frac{\xi_{n+1} - \xi_n}{\tilde{\phi}(\xi_{n+1})} = 0$ holds in place of~\eqref{e:unifctsfine}, and~\eqref{e:rclosespaced} holds. 

Let us fix a missing digit measure $\mu$ corresponding to the IFS 
\[ \{\lambda_{b,t}(x) \coloneqq b^{-1}(x+t)\}_{t\in \mathcal{D}} \] 
and the uniform probability vector. 
Here $\mathcal{D}$ is a proper subset of $\{0,\dotsc,b-1\}$ containing $0$ and at least one other digit. 
Let $\nu=f\mu$ be any one of the pushforward measures from Proposition~\ref{p:slowpushforward} corresponding to $\tilde{\phi}$ for $f$ corresponding to some choice of $(c_{n})$ and $(y_{n})$ satisfying the assumptions of this proposition. We see from Proposition~\ref{p:slowpushforward} and Lemma~\ref{lem:pushforwardconformal} that $\nu$ is a Rajchman self-conformal measure for some $C^{\infty}$ IFS and  
\begin{equation}
\label{e:tildeslowgrowth}
    \limsup_{\xi\to\infty}\frac{|\widehat{\nu}(\xi)|}{\tilde{\phi}(\xi)} > 0.
\end{equation}
Moreover, inspecting the proof of Lemma~\ref{lem:pushforwardconformal} and using the fact that $1\leq f'(x)\leq 2$ for all $x\in [0,1]$, it is clear that we can take this IFS to be 
\begin{equation}\label{e:definest} 
\{ S_t \coloneqq f \circ \lambda_{b,t} \circ f^{-1} \colon [0,f(1)]\to [0,f(1)] \}_{t \in \mathcal{D}} 
\end{equation}
and the probability vector to be the uniform vector. 
It is a consequence of~\eqref{e:definest} that every iterate of this IFS is of the form 
\begin{equation}\label{e:definesiterate} 
\{ S_{t} \coloneqq f \circ \lambda_{b^{l},t} \circ f^{-1} \colon [0,f(1)]\to [0,f(1)] \}_{t \in \mathcal{D}_{l}} 
\end{equation}
for some $l\in \mathbb{N}$ and $\mathcal{D}_{l}\subset \{0,1,\dotsc, b^{l}-1\}$ (here $\lambda_{b^{l},t}(x) \coloneqq b^{-l}(x+t)$).\footnote{Note that the maps $S_{t}$ for $t\in \mathcal{D}_{l}$ implicitly depend upon $l$; we suppress this dependence from our notation.} 
It is a consequence of $0\in \mathcal{D}$ that $0\in \mathcal{D}_{l}$ for all $l\in \N$.

We now focus on the following claim: 

\noindent \textbf{Claim~1:} There exist $(c_{n})$ and $(y_{n})$ satisfying the assumptions of Proposition~\ref{p:slowpushforward} such that the corresponding $f$ has the property that for every $l\in \N$ and $t\in \mathcal{D}_{l}$, the function $S_t''$ vanishes at at most finitely many points in $[0,f(1)]$. 

Despite this statement perhaps seeming obvious, the proof of this claim is somewhat technical. 
To verify this claim, we fix a choice of $(y_{n})$ and introduce a parameterised family of $(c_{n})$ for which each member of this family and $(y_{n})$ satisfies the assumptions of Proposition~\ref{p:slowpushforward} with the sequences $(\xi_{n})$ and $(r_{n})$ defined as described above in terms of $\tilde{\phi}$. 
For now we fix sequences $(c_{n})$ and $(y_{n})$, where $(c_{n})$ is defined by at each stage setting 
\[ c_n \coloneqq \frac{1}{2} \min\Big\{2^{-n}, \min_{1 \leq j \leq n} \frac{2^{-(n+1-j)} y_j}{W(r_j - r_{n+1})} \Big\}, \]
and $(y_{n})$ is chosen arbitrarily so the assumptions of Proposition~\ref{p:slowpushforward} are satisfied. We let $g_{0}$ be defined by~\eqref{e:defineg} for this specific choice of $(c_{n})$.
For each $n\in \N$ consider the interval 
\[ \Omega_{n} \coloneqq \left[0,\frac{1}{2} \min\Big\{2^{-n}, \min_{1 \leq j \leq n} \frac{2^{-(n+1-j)} y_j}{W(r_j - r_{n+1})} \Big\}\right] \] 
equipped with the Borel $\sigma$-algebra, and let $\nu_{n}$ be the normalised Lebesgue measure on this interval. Thus $\nu_{n}$ is a probability measure. 
We now consider the set $\Omega \coloneqq \prod_{n=1}^{\infty}\Omega_{n}$ equipped with the corresponding product $\sigma$-algebra and the product measure $\nu \coloneqq \prod_{n=1}^{\infty}\nu_{n}$. 
Then $\nu$ is also a probability measure. To each $\omega = (\delta_{n})_{n=1}^{\infty}\in \Omega$ we associate the sequence $(c_{n}(\omega))_{n=1}^{\infty}$ according to the rule 
\[ c_{n}(\omega) = \frac{1}{2} \min\Big\{2^{-n}, \min_{1 \leq j \leq n} \frac{2^{-(n+1-j)} y_j}{W(r_j - r_{n+1})} \Big\}+\delta_{n} \] 
for all $n\in \N$. 
Note that for each $\omega$ the sequence $(c_{n}(\omega))$  satisfies $c_{n}(\omega)\in (0,2^{-n}]$ for all $n\in \N$ and~\eqref{e:definec2}. Since $c_{n}\leq c_{n}(\omega)$ for all $n$ and $\omega$, we still have 
\[ y_{n+1} \leq (c_n(\omega) W(r_n - r_{n+1}))^{1/r_n} \qquad \mbox{ for all } n \in \N,\] 
so~\eqref{e:definey2} still holds. This means that for every $\omega\in \Omega$ the sequences $(c_n(\omega))$ and $(y_n)$ satisfy the assumptions of Proposition~\ref{p:slowpushforward}. For each $\omega$ we let $g_{\omega}\colon \mathbb{R}\to \mathbb{R}$ be defined as in Subsection~\ref{ss:provezerodim}, i.e. for $x \geq 0$ we have 
\begin{equation}\label{e:definegomega}
g_{\omega}(x) = \sum_{n=1}^{\infty}c_{n}(\omega) W(x-r_{n}) 
\end{equation}
and $g(x) = -g(-x)$ for $x<0$. 
We similarly let 
$f_{\omega}$ and $S_{\omega,t}$ be the corresponding functions obtained from $g_{\omega}$ as in~\eqref{e:seconddefinef}, \eqref{e:definest}. 
The pushforward measures $\nu_{\omega} \coloneqq f_{\omega} \mu$ satisfy~\eqref{e:tildeslowgrowth} by Proposition~\ref{p:slowpushforward}. 
We note that for all $\omega\in \Omega$ and $x\geq 0$ we have 
\begin{equation}
    \label{e:doublebound}
    g_{0}(x)\leq g_{\omega}(x)\leq 2g_{0}(x).
\end{equation} 
Claim~$1$ follows immediately from the following claim. 

\noindent \textbf{Claim~1a:} For $\nu$ almost every $\omega\in \Omega$, for all $l\in \N$ and $t\in \mathcal{D}_{l}$ the function $S_{\omega,t}''$ vanishes at at most finitely many points in $[0,f_{\omega}(1)]$.

Moreover, as a countable union of sets with $\nu$-measure zero has $\nu$-measure zero, it is sufficient to prove the following claim.

\noindent \textbf{Claim~1b:} For each $l\in \N$, for $\nu$ almost every $\omega\in \Omega$, for all $t\in \mathcal{D}_{l}$ the function $S_{\omega,t}''$ vanishes at at most finitely many points in $[0,f_{\omega}(1)]$.

We now give a proof of Claim~1b.

\textbf{Proof of Claim~1b:} 
Let us begin by fixing $l\in \N$. A direct calculation shows that the following holds for all $\omega\in \Omega$, $x\geq 0$ and $t\in\mathcal{D}_{l}$ as in~\eqref{e:secondderivconj}: 
\begin{align}\label{e:firstandsecondderivs}
\begin{split}
&S_{\omega,t}'(f_{\omega}(x)) = b^{-l}\frac{f_{\omega}'(\lambda_{b^{l},t}(x))}{f_{\omega}'(x)}; \\ 
&S_{\omega,t}''(f_{\omega}(x)) = \frac{b^{-l}}{(f_{\omega}'(x))^2}\left( b^{-l} g_{\omega}(\lambda_{b^{l},t}(x)) - \frac{f_{\omega}'(\lambda_{b^{l},t}(x))}{f_{\omega}'(x)} \cdot g_{\omega}(x) \right).
\end{split}
\end{align}

Thus for all $\omega\in \Omega$, $t\in \mathcal{D}_{l}$ and $x > 0$, the sign of $S_{\omega,t}''(f_{\omega}(x))$ is the same as the sign of 
\begin{equation}
\label{e:samesign*}
    b^{-l} \frac{g_{\omega}(\lambda_{b^{l},t}(x))}{g_{\omega}(x)} - \frac{f_{\omega}'(\lambda_{b^{l},t}(x))}{f_{\omega}'(x)}. 
\end{equation} 
But for all $\omega\in \Omega$ and $x \in [0,1]$ we have $f_{\omega}'(x) \in [1,2]$\, so 
\begin{equation}
\label{e:fquotientbounds*}
 \frac{1}{2} \leq \frac{f_{\omega}'(\lambda_{b^{l},t}(x))}{f_{\omega}'(x)} \leq 2
 \end{equation}
for $x \in [0,1]$. 
Moreover, for all $\omega\in \Omega$ and $t \in \mathcal{D} \setminus \{0\}$, 
\[ \frac{g_{\omega}(\lambda_{b^{l},t}(x))}{g_{\omega}(x)} \to \infty \quad \mbox{ and } \quad \frac{g_{\omega}(\lambda_{b^{l},0}(x))}{g_{\omega}(x)} \to 0 \quad \mbox{ as } x \to 0^+. \]
Thus, considering our unperturbed function $g_{0}$ (which corresponds to the case where $\omega=(\delta_{n})$ satisfies $\delta_{n} = 0$ for all $n$) we see that there exists some small $x_0 > 0$ such that for all $x \in (0,x_{0}]$ we have 
\begin{equation}
    \label{e:ginequalities1*}
\frac{g_{0}(\lambda_{b^{l},0}(x))}{g_{0}(x)}\leq \frac{1}{10}\quad \textrm{ and } \quad     \frac{g_{0}(\lambda_{b^{l},t}(x))}{g_{0}(x)}\geq 10b^{l}\quad  \forall \, t\in \mathcal{D}_{l} \setminus \{0\}.
\end{equation} 
It follows now from~\eqref{e:doublebound} and~\eqref{e:ginequalities1*} that the following holds for all $x \in (0,x_{0}]$ and all $\omega\in \Omega$:
\begin{equation*}
    \label{e:ginequalities2*}
\frac{g_{\omega}(\lambda_{b^{l},0}(x))}{g_{\omega}(x)}\leq \frac{1}{5}\quad \textrm{ and } \quad   \frac{g_{\omega}(\lambda_{b^{l},t}(x))}{g_{\omega}(x)}\geq 5b^{l}\quad  \forall \, t\in \mathcal{D} \setminus \{0\}.
\end{equation*}
By~\eqref{e:samesign*} and~\eqref{e:fquotientbounds*} this implies that for each $\omega\in \Omega$ we have
\begin{align}\label{e:secondderiv2*}
\begin{split}
    &S_{\omega,0}''(f(x)) < 0 \quad \mbox{ for all } x \in (0,x_0] \qquad \mbox{ (while $S_{0}''(0) = 0$)}, \mbox{ and } \\*
    &S_{\omega,t}''(f(x)) > 0 \quad \mbox{ for all } x \in [0,x_0],\, t \in \mathcal{D} \setminus \{0\}.
\end{split}
\end{align}

For each $\omega\in \Omega$, $f_{\omega}$ is analytic on each interval of the form $(r_{n+1},r_{n})$, so each $S_{\omega,t}$ is piecewise-analytic on $(0,f_{\omega}(1))$. 
Indeed, $S_{\omega,0}$ is analytic on $(0,f_{\omega}(1))$ away from the points of the form $f_{\omega}(r_n)$ or $f_{\omega}(\lambda_{b^{l},0}^{-1}(r_n))$ for some $n$. 
Moreover, if $t \in \mathcal{D}_{l}\setminus\{0\}$ then $S_{\omega,t}$ is analytic on $(0,f_{\omega}(1))$ away from the points of the form $f_{\omega}(r_{n})$ for some $n$ or $f_{\omega}(\lambda_{b^{l},t}^{-1}(r_{n}))$ for one of the finitely many $r_{n}\in \lambda_{b^{l},t}([0,1])$ (recall that $\lim_{n\to\infty}r_{n} = 0$ and $0\notin\lambda_{b^{l},t}([0,1])$ for $t\in \mathcal{D}_{l}\setminus\{0\}$). 
This set of points where analyticity fails forms a discrete set that accumulates at $0$, so the following set is finite:
\begin{equation}\label{e:defineasequence} 
(x_{0},1]\cap \left(\{r_{n}\}_{n\in \N}\cup \{\lambda_{b^{l},0}^{-1}(r_{n})\}_{n\in \N}\cup \{\lambda_{b^{l},t}^{-1}(r_{n}) : t \in \mathcal{D}_{l}\setminus\{0\}, r_{n}\in \lambda_{b^{l},t}([0,1])\}\cup\{1\}\right). 
\end{equation}
We let $\{a_{i}\}_{i=1}^{K}$ denote an enumeration of this set written in increasing order, and let $a_{0} = x_{0}$. We emphasise that $a_{K}=1$ and that $\cup_{i=0}^{K}(a_{i},a_{i+1})$ equals $(x_{0},1)$ apart from finitely many points where successive intervals meet. Moreover, by construction, for all $\omega\in \Omega$, $0\leq i\leq K-1$ and $t\in \mathcal{D}_{l}$, the function $S_{\omega,t}''$ is analytic on $(f_{\omega}(a_i),f_{\omega}(a_{i+1}))$. 
By~\eqref{e:secondderiv2*} we know that for all $\omega\in \Omega$ and $t\in \mathcal{D}_{l}$, we have $S_{\omega}''(y)\neq 0$ for any $y \in (0,f(x_{0})]$. 
Thus, to prove Claim~1b, it is sufficient to show that for $\nu$ almost every $\omega\in \Omega$, for each $t\in \mathcal{D}_{l}$ the function $S_{\omega,t}''$ has finitely many zeros in $(f_{\omega}(x_{0}),f_{\omega}(1))$. The fact that each $S_{\omega,t}''$ is analytic on $(f_{\omega}(a_{i}),f_{\omega}(a_{i+1}))$ for each $0\leq i\leq K-1$ and $(f_{\omega}(x_{0}),f_{\omega}(1))$ equals $\cup_{i=0}^{K-1}(f_{\omega}(a_{i}),f_{\omega}(a_{i+1}))$ modulo a finite set of points will be very useful in achieving this goal. 
First, however, we have to check that $\nu$ almost surely, each $S_{\omega,t}''$ is not the constant zero function on any of the intervals $(f_{\omega}(a_{i}),f_{\omega}(a_{i+1}))$. 
This is the content of the following claim. 

\textbf{Claim~2:} For $\nu$ almost every $\omega\in \Omega$, for all $t\in \mathcal{D}_{l}$ and $0\leq i\leq K-1$, there exists $y\in (f_{\omega}(a_{i}),f_{\omega}(a_{i+1}))$ such that $S_{\omega,t}''(y)\neq 0$.

We start with an observation. Suppose that $\omega\in \Omega$, $t\in \mathcal{D}_{l}$ and $0\leq i\leq K-1$ were such that $S_{\omega,t}''(y) = 0$ for all $y\in (f_{\omega}(a_{i}),f_{\omega}(a_{i+1}))$. Then $S_{w,t}'$ would be a constant function on $(f_{\omega}(a_{i}),f_{\omega}(a_{i+1}))$. 
Combining this observation with~\eqref{e:firstandsecondderivs} implies that if $\omega\in \Omega$, $t\in \mathcal{D}_{l}$ and $0\leq i\leq K-1$ were such that $S_{\omega,t}''(y)= 0$ for all $y\in (f_{\omega}(a_{i}),f_{\omega}(a_{i+1}))$, then there exists $C_{\omega}>0$ such that 
\[ g_{\omega}(\lambda_{b^{l},t}(x))=C_{\omega}g_{\omega}(x) \qquad \mbox{ for all } x\in (a_{i},a_{i+1}). \] 

Now let us approach Claim~2 directly. Suppose the claim is false. Then there exists $t\in \mathcal{D}_{l}$ and $0\leq i\leq K-1$ such that for some $B\subset \Omega$ with $\nu(B)>0$, for all $\omega\in B$ we have $S_{\omega,t}''(y) = 0$ for all $y\in (f_{\omega}(a_{i}),f_{\omega}(a_{i+1}))$. By the above, this in turn implies that there exists $t\in \mathcal{D}_{l}$ and $0\leq i\leq K-1$, such that for all $\omega\in B$ there exists $C_{\omega}>0$ such that 
\begin{equation}
\label{e:gconstants}
g_{\omega}(\lambda_{b^{l},t}(x)) = C_{\omega} g_{\omega}(x) \qquad \mbox{ for all } x\in (a_{i},a_{i+1}).
\end{equation}
Recalling the definition of $g_{\omega}$ from~\eqref{e:definegomega}, the definition of $\{a_0,\dotsc,a_K\}$ from~\eqref{e:defineasequence}, and the fact that $x_0 = a_0 \leq a_i$, we see that no points in $(a_i,a_{i+1})$ coincide with any of the $r_n$ values, so there exists a positive integer $n_1$ such that 
\begin{equation}\label{e:equalsparticularsumbasic2}
g_{\omega}(x) = \sum_{n = n_1}^{\infty} c_n(\omega) e^{-(x-r_{n+1})^{-2}} \qquad \mbox{ for all } x \in (a_i,a_{i+1}). 
\end{equation}
Similarly, for all $x \in (a_i,a_{i+1})$, $\lambda_{b^{l},t}(x)$ does not coincide with any of the $r_n$ values, so there exists a positive integer $n_2$ such that 
\begin{equation}\label{e:equalsparticularsumscaled2} 
g_{\omega}(\lambda_{b^{l},t}(x)) = \sum_{n = n_2}^{\infty} c_n(\omega) e^{-(\lambda_{b^{l},t}(x)-r_{n+1})^{-2}} \qquad \mbox{ for all } x \in (a_i,a_{i+1}). 
\end{equation}
Thus by~\eqref{e:gconstants}, for all $x \in (a_i,a_{i+1})$ we have 
\begin{equation}
    \label{e:constantseries}
    \sum_{n=n_{2}}^{\infty} c_{n}(\omega)e^{-1/(\lambda_{b^{l},t}(x)-r_{n+1})^{2}} = C_{\omega}\sum_{n=n_{1}}^{\infty}c_{n}(\omega) e^{-1/(x-r_{n+1})^{2}}.
\end{equation}
 
 Let us now fix $M\geq \max\{n_{1},n_{2}\}$. 
 Since $\nu$ is a product measure and $\nu(B)>0$, it is a consequence of Fubini's theorem applied to the indicator function on $B$ that there exists some sequence $(\delta_{n}^{*})_{n=1,n\neq M}^{\infty}$ such that the set 
 \[ B_{M} \coloneqq \left\{\delta: (\delta_{1}^{*},\dotsc, \delta_{M-1}^{*},\delta,\delta_{M+1}^{*},\dotsc )\in B\right\} \] 
 satisfies $\nu_{M}(B_{M})>0$. 
 For $\delta\in B_{M}$ we let $(c_{n}(\delta))_{n=1}^{\infty}$ be the sequence defined by 
 \begin{equation*}
    c_{n}(\delta) \coloneqq 
    \begin{cases} 
    \frac{1}{2} \min\Big\{2^{-n}, \min_{1 \leq j \leq n} \frac{2^{-(n+1-j)} y_j}{W(r_j - r_{n+1})} \Big\}+\delta_{n}^* \qquad &\mbox{ for } n \neq M, \\* 
    \frac{1}{2} \min\Big\{2^{-M}, \min_{1 \leq j \leq M} \frac{2^{-(M+1-j)} y_j}{S(r_j - r_{M+1})} \Big\} + \delta \qquad &
    \mbox{ for } n = M. 
    \end{cases}
    \end{equation*}
 We emphasise that for all $n\neq M$, the term $c_{n}(\delta)$ does not depend upon $\delta$. 
 It follows from~\eqref{e:constantseries} that for each $\delta\in B_{M}$ there exists $C_{\delta}>0$ such that for all $x \in (a_{i},a_{i+1})$ we have 
 \begin{equation}
 \label{e:constantseries2}
 \sum_{n=n_{2}}^{\infty} c_{n}(\delta) e^{-1/(\lambda_{b^{l},t}(x)-r_{n+1})^{2}} = C_{\delta}\sum_{n=n_{1}}^{\infty}c_{n}(\delta) e^{-1/(x-r_{n+1})^{2}}.
 \end{equation}
 
 Let us now fix $\delta^{*}\in B_{M}$. 
 If $\delta\in B_{M}$ then by~\eqref{e:constantseries2} and the definitions of $(c_{n}(\delta))$ and $(c_{n}(\delta^*))$, for all $x \in (a_{i},a_{i+1})$ we have
\begin{equation}
\label{e:writetwoways1}
\sum_{n=n_{2}}^{\infty}c_{n}(\delta) e^{-1/(\lambda_{b^{l},t}(x)-r_{n+1})^{2}} = C_{\delta}\left(\sum_{n=n_{1}}^{\infty} c_{n}(\delta^{*}) e^{-1/(x-r_{n+1})^{2}} + (\delta-\delta^{*}) e^{-1/(x-r_{M+1})^{2}}\right).
\end{equation}
Using~\eqref{e:constantseries2} again, we see that the following property for the left hand side of~\eqref{e:writetwoways1} also holds for all $\delta\in B_{M}$ and all $x \in (a_{i},a_{i+1})$:
\begin{equation}
\label{e:writetwoways2}
\sum_{n=n_{2}}^{\infty} c_{n}(\delta) e^{-1/(\lambda_{b^{l},t}(x)-r_{n+1})^{2}} = C_{\delta^*} \sum_{n=n_{1}}^{\infty}c_{n}(\delta^{*}) e^{-1/(x-r_{n+1})^{2}} + (\delta-\delta^{*}) e^{-1/(\lambda_{b^{l},t}(x) - r_{M+1})^{2}}.
\end{equation}
Combining~\eqref{e:writetwoways1} and~\eqref{e:writetwoways2} yields that for all $\delta\in B_{M}$ and all $x\in (a_{i},a_{i+1})$ we have 
\begin{equation}
\label{e:obtaincontradiction}
(C_{\delta^{*}}-C_{\delta}) \sum_{n=n_{1}}^{\infty} c_{n}(\delta^{*}) e^{-1/(x-r_{n+1})^{2}} = C_{\delta}(\delta-\delta^*) e^{-1/(x-r_{M+1})^{2}} - (\delta-\delta^{*}) e^{-1/(\lambda_{b^{l},t}(x)-r_{M+1})^{2}}.
\end{equation}

We now show via a case analysis that~\eqref{e:obtaincontradiction} always gives a contradiction. In our analysis we will repeatedly use the following fact. 

\noindent \textbf{Statement~2:} For all $i$ and $t\in \mathcal{D}_{l}$, the function 
\[ x \mapsto \frac{e^{-1/(\lambda_{b^{l},t}(x)-r_{M+1})^{2}}}{e^{-1/(x-r_{M+1})^{2}} } \] 
is not constant on the interval $(a_{i},a_{i+1})$. 

Statement~$2$ is a simple consequence of fact that for all $t\in \mathcal{D}_{l}$, the function 
\[ x \mapsto (\lambda_{b^{l},t}(x) - r_{M+1})^{-2} - (x - r_{M+1})^{-2} \] 
is not a constant function on any $(a_{i},a_{i+1})$. 
We now give our case analysis.

\noindent \textbf{Case A: $C_{\delta} = C_{\delta^*}$ for some $\delta\in B_{M}\setminus \{\delta^{*}\}$.} 
In this case~\eqref{e:obtaincontradiction} implies 
\[ 0 = C_{\delta}(\delta-\delta^*) e^{-1/(x-r_{M+1})^{2}}- (\delta-\delta^{*}) e^{-1/(\lambda_{b^{l},t}(x)-r_{M+1})^{2}} \qquad \mbox{ for all } x\in (a_{i},a_{i+1}). \] 
This is not possible by Statement~$2$.

\noindent \textbf{Case B: $C_{\delta}\neq C_{\delta^*}$ for all $\delta\in B_{M}\setminus \{\delta^*\}$.} 
In this case we can rewrite~\eqref{e:obtaincontradiction} so that for all $\delta\in B_{M}\setminus \{\delta^*\}$ and all $x\in (a_{i},a_{i+1})$ we have 
\[ \sum_{n=n_{1}}^{\infty}c_{n}(\delta^{*}) e^{-1/(x-r_{n+1})^{2}} = \frac{C_{\delta}(\delta-\delta^*)}{(C_{\delta^{*}}-C_{\delta})} e^{-1/(x-r_{M+1})^{2}} - \frac{(\delta - \delta^{*})}{(C_{\delta^{*}}-C_{\delta})} e^{-1/(\lambda_{b^{l},t}(x)-r_{M+1})^{2}}. \]  
Since the left hand side of the above does not depend upon $\delta$, it follows that for any $\delta,\delta'\in B_{M}\setminus \{\delta^{*}\}$ and $x\in (a_{i},a_{i+1})$ we have 
\begin{align}
\begin{split}
    \label{e:bracketedterms}
    0 &= \left(\frac{C_{\delta}(\delta-\delta^*)}{(C_{\delta^{*}}-C_{\delta})}-\frac{C_{\delta'}(\delta'-\delta^*)}{(C_{\delta^{*}}-C_{\delta'})}\right) e^{-1/(x-r_{M+1})^{2}}\\*
    &\phantom{-} -\left(\frac{(\delta-\delta^{*})}{(C_{\delta^{*}}-C_{\delta})} - \frac{(\delta'-\delta^{*})}{(C_{\delta^{*}}-C_{\delta'})}\right) e^{-1/(\lambda_{b^{l},t}(x)-r_{M+1})^{2}}.
    \end{split}
    \end{align}
 It follows from Statement~$2$ that the two bracketed terms in~\eqref{e:bracketedterms} equal zero for all $\delta,\delta'\in B_{M}\setminus \{\delta^{*}\}$. 
 In summary, we have shown that the maps $\delta \mapsto \frac{C_{\delta}(\delta-\delta^*)}{(C_{\delta^{*}}-C_{\delta})}$ and $\delta \mapsto \frac{(\delta-\delta^{*})}{(C_{\delta^{*}}-C_{\delta})}$ are constant functions on $B_{M}\setminus \{\delta^*\}$. This in turn implies that the map $\delta \mapsto C_{\delta}$ is a constant function on $B_{M}\setminus \{\delta^*\}$. 
 We denote the constant value this function takes by $C$. 
 Substituting this information into~\eqref{e:obtaincontradiction} yields that for every $\delta\in B_{M}\setminus \{\delta^*\}$ and $x\in (a_{i},a_{i+1})$ we have 
\begin{equation}
\label{e:final contradiction}
(C_{\delta^*}-C) \sum_{n=n_{1}}^{\infty} c_{n}(\delta^{*}) e^{-1/(x-r_{n+1})^{2}}=(\delta-\delta^*)(Ce^{-1/(x-r_{M+1})^{2}} - e^{-1/(\lambda_{b^{l},t}(x)-r_{M+1})^{2}}).
\end{equation}
 The left hand side of~\eqref{e:final contradiction} does not depend upon $\delta$. 
 The right hand side of~\eqref{e:final contradiction} is of the form $(\delta-\delta^{*})$ multiplied by a term that does not depend upon $\delta$. 
 As $\delta$ is an arbitrary element of $B_{M}\setminus \{\delta^*\}$ and $B_{M}$ is a positive measure set (and is therefore infinite\footnote{In fact $\#B_{M}\geq 3$ is sufficient for this argument.}), the only way for~\eqref{e:final contradiction} to hold for all $x\in (a_{i},a_{i+1})$ and $\delta\in B_{M}\setminus\{\delta^{*}\}$ is for 
 \[ Ce^{-1/(x-r_{M+1})^{2}} - e^{-1/(\lambda_{b^{l},t}(x)-r_{M+1})^{2}} = 0 \qquad \mbox{ for all } x \in (a_{i},a_{i+1}). \] 
 However, this is not possible by Statement~$2$.

 We have shown that regardless of whether we are in Case~(A) or Case~(B), we obtain a contradiction to Statement~$2$. Thus, Claim~$2$ must be true. 

We return to the proof of Claim~1b. By Claim~2, the set of $\omega\in \Omega$ such that for all $t \in \mathcal{D}_{l}$ and $0 \leq i \leq K-1$ there exists $y\in (f_{\omega}(a_{i}),f_{\omega}(a_{i+1}))$ such that $S_{\omega,t}''(y)\neq 0$ has full $\nu$-measure. 
Therefore it suffices to show that for any such $\omega$, for all $t\in \mathcal{D}_{l}$ the function $S_{\omega,t}''$ vanishes at at most finitely many points in $[0,f_{\omega}(1)]$. 
For the remainder of the proof we fix such an $\omega$ and write $(c_{n})=(c_{n}(\omega))$, $g=g_{\omega}$, $f = f_{\omega}$ and $S_t = S_{\omega,t}$ for brevity. 
Our strategy will be to show that for each $t \in \mathcal{D}_{l}$, at all points $y \in (0,f(1))$ which are sufficiently close to some $f(a_i)$ (where $0 \leq i \leq K$) but do not actually coincide with any of these points, we have $S_t''(y) \neq 0$. 
This together with~\eqref{e:secondderiv2*} and the fact that $S_{t}''$ is analytic on $(f(a_{i}),f(a_{i+1}))$ for each $0\leq i\leq K-1$  will imply that $S_{t}''$ has finitely many zeros. 

Consider the interval $[f(a_i),f(a_{i+1})]$ for some $i \in \{0,\dotsc,K - 1\}$. 
If $t\in \mathcal{D}_{l}$ is such that $S_t''(f(a_{i+1})) \neq 0$ then by continuity there exists $\varepsilon_1 > 0$ such that $S_t''(y) \neq 0$ for all $y \in (f(a_{i+1}) - \varepsilon_1, f(a_{i+1}) + \varepsilon_1)$. 
We now assume that $t\in \mathcal{D}_{l}$ is such that $S_t''(f(a_{i+1})) = 0$. 
As in~\eqref{e:equalsparticularsumbasic2} and~\eqref{e:equalsparticularsumscaled2} there exist $n_{1}\in \N$ and $n_{2}\in \N$ such that 
\begin{equation}\label{e:equalsparticularsumbasic}
g(x) = \sum_{n = n_1}^{\infty} c_n e^{-(x-r_{n+1})^{-2}} \quad \mbox{ and } \quad g(\lambda_{b^{l},t}(x)) = \sum_{n = n_2}^{\infty} c_n e^{-(\lambda_{b^{l},t}(x)-r_{n+1})^{-2}}  
\end{equation}
for all $x\in (a_{i},a_{i+1}]$. Note that $r_{n_1+1} \leq a_i$ and define the strictly increasing function $\tilde{f} \colon [r_{n_1+1},\infty) \to \R$ by 
\begin{equation*}
\tilde{f}(x) \coloneqq 
\begin{cases} x + \int_0^x \int_0^y \sum_{n = n_1}^{\infty} c_n e^{-(r-r_{n+1})^{-2}} dr dy, \qquad &\mbox{ for } x \in (r_{n_1+1},\infty), \\* 
r_{n_1+1} + \int_0^{r_{n_1+1}} \int_0^y \sum_{n = n_1 + 1}^{\infty} c_n e^{-(r-r_{n+1})^{-2}} dr dy, \qquad &\mbox{ for } x = r_{n_1+1}.
\end{cases}
\end{equation*}
Note that $\tilde{f}$ is analytic on $(r_{n_1+1},\infty)$. 
We emphasise that for all $x \in [a_i,a_{i+1}]$ we have $\tilde{f}(x) = f(x)$, $\tilde{f}'(x) = f'(x)$, and $\tilde{f}''(x) = g(x)$. 
Define the function $h \colon [\tilde{f}(r_{n_1+1}),\infty) \to \R$ by 
\begin{align}
\begin{split}\label{e:longhformula}
h(y) &\coloneqq \frac{b^{-l}}{(\tilde{f}'(\tilde{f}^{-1}(y)))^2} \cdot b^{-l} \sum_{n = n_2}^{\infty} c_n e^{-(\lambda_{b^l,t}(\tilde{f}^{-1}(y)) - r_{n+1})^{-2}} \\*
&- \frac{b^{-l}}{(\tilde{f}'(\tilde{f}^{-1}(y)))^2} \cdot \frac{1 + \int_0^{\lambda_{b^l,t}(\tilde{f}^{-1}(y))} \sum_{n = n_2}^{\infty} c_n e^{-(x'-r_{n+1})^{-2}} dx'}{\tilde{f}'(\tilde{f}^{-1}(y))} \cdot \sum_{n = n_1}^{\infty} c_n e^{-(\tilde{f}^{-1}(y)-r_{n+1})^{-2}}
\end{split}
\end{align}
for $y \in (\tilde{f}(r_{n_1+1}),\infty)$, extending to $\tilde{f}(r_{n_1+1})$ by continuity. Note that $h$ is analytic on $(\tilde{f}(r_{n_1+1}),\infty)$. 
By~\eqref{e:firstandsecondderivs}, \eqref{e:equalsparticularsumbasic}, $h|_{(f(a_i),\infty)}$ is the unique analytic function on $(f(a_i),\infty)$ which satisfies 
\[ h(f(x)) = S_t''(f(x)) \qquad \mbox{ for all } x \in (a_i,a_{i+1}]. \]

Now, since $S_t'' \circ f$ is not identically zero on $(a_i,a_{i+1})$ by assumption and coincides with $h \circ \tilde{f}$ on this interval, we see that $h \circ \tilde{f}$ is not identically zero on $(a_i,\infty)$. Therefore, since $h\circ \tilde{f}$ is analytic on $(a_i,\infty)$, there exists some minimal positive integer $k$ such that $(h \circ \tilde{f})^{(k)}(a_{i+1}) \neq 0$. 
But since $f = \tilde{f}$ on $[a_i,a_{i+1}]$, we have 
\[ (h \circ f)^{(k)}(a_{i+1}) = (h \circ \tilde{f})^{(k)}(a_{i+1}) \neq 0. \]
Therefore by Taylor's theorem there exist small constants $C,\varepsilon_2 > 0$ such that  
\begin{equation}\label{e:positivekthderiv} 
|h(f(x))| \geq C (x - a_{i+1})^k \qquad \mbox{ for all } x \in (a_{i+1} - \varepsilon_2, a_{i+1} + \varepsilon_2).
\end{equation}
In particular, 
\[ h(f(x)) \neq 0 \qquad \mbox{ for all } x \in (a_{i+1} - \varepsilon_2, a_{i+1} + \varepsilon_2) \setminus \{a_{i+1}\},\] 
so 
\[ S_t''(f(x)) = h(f(x)) \neq 0 \qquad \mbox{ for all } x \in (a_{i+1} - \varepsilon_2, a_{i+1}). \] 

Understanding the behaviour of $S_t''$ to the right of $a_{i+1}$ is a little more delicate. Define the $C^{\infty}$ function $s_{0}$ via the equation $\tilde{f}=f+s_{0}$. 
Using the definitions of $g$, $f$, the formula for $S_t''$ given in~\eqref{e:firstandsecondderivs} and the formula for $h$ from~\eqref{e:longhformula}, we can find $\varepsilon_3 > 0$ and similarly-defined $C^{\infty}$ functions $s_1,s_2,s_3,s_4$ (some of which may be identically zero) such that for all $x \in [a_{i+1}, a_{i+1} + \varepsilon_3)$, 
\begin{align}\label{e:approxsecondderivflat}
\begin{split}
&h(f(x) + s_0(x)) \\*
& = \frac{b^{-l} (g(\lambda_{b^{l},t}(x)) + s_1(x)) (f'(x) + s_2(x)) - (f'(\lambda_{b^{l},t}(x)) + s_3(x)) (g(x) + s_4(x))}{b^{l}(f'(x) + s_2(x))^3}. 
\end{split}
\end{align}
Moreover, we can guarantee that for all $i \in \{0,1,2,3,4\}$, $s_i$ and all its derivatives vanish at $a_{i+1}$, so (decreasing $\varepsilon_3$ if necessary) 
\begin{equation}
\label{e:s_i faster than poly decay}
 s_i(x) < (x - a_{i+1})^{k+1} \qquad \mbox{ for all } i \mbox{ and } x \in (a_{i+1}-\varepsilon_3,a_{i+1}+\varepsilon_3). 
 \end{equation}

Recalling that we have chosen $s_0$ such that $\tilde{f} = f + s_0$, by the mean value theorem there exists $C>0$ such that 
\begin{equation}\label{e:hcloseattwoclosevalues} 
|h(\tilde{f}(x)) - h(f(x))| \leq C s_0(x) \qquad \mbox{ for all } x \in [0,1].
\end{equation}
Letting $\varepsilon_4>0$ be small enough that $g(x), g(\lambda_{b,t}(x)), f'(x), f'(\lambda_{b,t}(x))$ remain uniformly bounded away from $0$ and $\infty$ for $x \in [a_{i+1} - \varepsilon_4,a_{i+1} + \varepsilon_4]$, we see from~\eqref{e:firstandsecondderivs}, \eqref{e:approxsecondderivflat} and a direct algebraic manipulation that there is some large constant $C'>0$ such that for all such $x$ we have 
\[ |h(\tilde{f}(x)) - S_t''(f(x))| < C' \max_{1 \leq i \leq 4} s_i(x). \] 
But by~\eqref{e:hcloseattwoclosevalues} this means that there exists $C'' \geq C'$ and $\varepsilon_5 \in (0,\varepsilon_4)$ such that for all $x \in [a_{i+1} - \varepsilon_5,a_{i+1} + \varepsilon_5]$ we have 
\[ |h(f(x)) - S_t''(f(x))| < C'' \max_{0 \leq i \leq 4} s_i(x). \] 
Moreover there is some small $0 < \varepsilon_6 < \min\{ \varepsilon_1, \varepsilon_2, \varepsilon_3,\varepsilon_4,\varepsilon_5\}$ such that for all $x \in [-\varepsilon_6,\varepsilon_6]$ we have $C x^k/2 > C''x^{k+1}$. 
This means that for all $x \in (a_{i+1}, a_{i+1} + \varepsilon_6]$, by the above and \eqref{e:s_i faster than poly decay}, we have 
\[ |h(f(x)) - S_t''(f(x))| < \frac{C(x - a_{i+1})^{k}}{2}, \] 
and therefore by~\eqref{e:positivekthderiv}, 
\begin{equation*}
|S_t''(f(x))| > \frac{C(x - a_{i+1})^{k}}{2} > 0. 
\end{equation*}

Since $0 \leq i \leq K-1$ in the above argument was arbitrary, we see that there exists 
\[ 0 < \varepsilon_7 < \min \left\{ \varepsilon_6, \min_{0 \leq i \leq K-1} \frac{a_{i+1} - a_i}{2} \right\} \] such that 
\[ |S_t''(f(x))| > 0 \qquad \mbox{ for all } \qquad x \in \left( \bigcup_{i=0}^{K} [a_i - \varepsilon_7,a_i + \varepsilon_7] \setminus \{a_i\} \right) \cap (0,1) \]
(the $i=0$ case follows since $S_t''(f(a_0)) = S_t''(f(x_0)) \neq 0$ by~\eqref{e:secondderiv2*}). 
Then since $S_t''$ coincides with $h$ on $[f(a_i + \varepsilon_7), f(a_{i+1})]$ and this compact set is contained in the set $(f(a_i),\infty)$ on which $h$ is analytic, $S_t''$ must have at most finitely many zeros in $[f(a_i),f(a_{i+1})]$. 
Since $[f(a_{i}),f(a_{i+1})]$ was an arbitrary choice of interval, it now follows from this fact and~\eqref{e:secondderiv2*} that $S_{t}''$ can have at most finitely many zeros in $[0,f(1)]$. Since $t$ was arbitrary, this completes our proof of Claim~1b, which as discussed implies Claim~1a and Claim~1. 

To complete the proof of Theorem~\ref{t:slowconformal}, let $f$ be as in the statement of Claim~1, let $\nu$ be the resulting self-conformal measure, and let $\tilde{\nu}$ be the pushforward of $\nu$ by $\tilde{\lambda}(x) \coloneqq x/f(1)$. Then $\tilde{\nu}$ is the self-conformal measure for the IFS 
\[ \{\mathcal{S}_{t}= \tilde{\lambda} \circ f \circ \lambda_{b,t} \circ f^{-1} \circ \tilde{\lambda}^{-1} \colon [0,1]\to [0,1] \}_{t \in \mathcal{D}}. \] 
For any composition of the $\mathcal{S}_{t}$ contractions, it is a consequence of the definition of these maps and the chain rule that the zero set of the second derivative of this composition is precisely the image of the zero set of $(f \circ \lambda_{b^{l},t} \circ f^{-1})''$ under $\tilde{\lambda}$ for some $l\in \N$ and $t\in \mathcal{D}_l$, so in particular is finite. 
Now, by scaling properties of the Fourier transform, for all $\xi \in \RR$ we have $\widehat{\tilde{\nu}}(\xi) = \widehat{\nu}(\xi/f(1))$. 
Therefore since $\tilde{\phi}(\xi/f(1)) \geq \phi(\xi)$ for all $\xi$ sufficiently large by the definition of $\tilde{\phi}$, and using~\eqref{e:tildeslowgrowth}, we have 
\[ \limsup_{\xi \to \infty} \frac{|\widehat{\tilde{\nu}}(\xi)|}{\phi(\xi)} \geq \limsup_{\xi \to \infty} \frac{|\widehat{\nu}(\xi/f(1))|}{\tilde{\phi}(\xi/f(1))} > 0. \]
This completes our proof of Theorem~\ref{t:slowconformal}. 
\end{proof}

\subsection{Explicit examples}\label{ss:Examples}
In this section we will give the arguments for our remaining theorems and justify the claims made in Example~\ref{Example:self-similar image}. 
Theorems~\ref{t:particularfunctionconf} and~\ref{t:particularfunctionzero} can be proved using a similar strategy to that used in the proofs of Proposition~\ref{p:slowpushforward} and Theorem~\ref{t:slowconformal} (for Theorem~\ref{t:particularfunctionconf}), and Theorem~\ref{t:slowzerodim} (for Theorem~\ref{t:particularfunctionzero}). 
For Theorem~\ref{t:particularfunctionconf} we will need the following lemma. 

\begin{lem}\label{lem:particularzeroderiv}
    Let $h(x) = \exp(-\exp(x^{-2}))$ and $f(x) = x + h(x)$ for $x \in \R \setminus \{0\}$, and $f(0) = h(0) = 0$. 
    Let $0 < c < 1$ and $d \geq 0$, and let $\lambda_{c,d}(x) \coloneqq cx + d$ for $x \in \R$. 
    Then the following set is finite: 
    \begin{equation}\label{e:particularzeroset} 
    \{ y \in [0,f(1)] : (f \circ \lambda_{c,d} \circ f^{-1})''(y) = 0 \}.
    \end{equation}
\end{lem}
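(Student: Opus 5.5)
Write $\lambda=\lambda_{c,d}$ and $g=h''$. Exactly as in~\eqref{e:secondderivconj} and~\eqref{e:firstandsecondderivs}, for $x\in[0,1]$ with $\lambda(x)\in[0,1]$ we have
\[ (f\circ\lambda\circ f^{-1})''(f(x)) = \frac{c}{(f'(x))^2}\left( c\, g(\lambda(x)) - \frac{f'(\lambda(x))}{f'(x)}\, g(x)\right). \]
Since $f$ is a bijection from $[0,1]$ onto $[0,f(1)]$, it suffices to show that
\[ F(x) \coloneqq c\, g(\lambda(x)) f'(x) - f'(\lambda(x))\, g(x) \]
has finitely many zeros in $[0,1]$. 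If $\lambda([0,1])\not\subset[0,1]$ we interpret $f$ by the same formula on the relevant range; this is harmless since $h$ is analytic away from $0$. The key point is that $h$ is real analytic on $(0,\infty)$, so $F$ is real analytic on $(0,1]$ when $d>0$, and on $(0,1]$ also when $d=0$. A nonzero analytic function has only finitely many zeros on any compact subinterval of $(0,1]$. The only danger is therefore accumulation of zeros at $x=0$, together with the possibility that $F\equiv 0$.

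\textbf{Step 1 (behaviour near $0$).} I will compute $g$ explicitly. Write $u=x^{-2}$, so $h=\exp(-e^{u})$. Then
\[ h'(x)=2x^{-3}e^{u}h(x), \]
and $g(x)=h''(x)$ equals $4x^{-6}e^{2u}h(x)\,(1+o(1))$ as $x\to0^+$. In particular $g>0$ on $(0,x_0]$, and $\log g(x) = -e^{x^{-2}}(1+o(1))$. Near $0$ we also have $f'\to1$.

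\emph{Case $d>0$.} As $x\to0^+$, $g(\lambda(x))\to g(d)>0$ while $g(x)\to0$, so $F(x)\to c\,g(d)>0$. If $d>1$ we need $g(d)\neq0$; the formula $g=h\,(4x^{-6}e^{2u}-6x^{-4}e^{u}+\dots)$, in which the bracket has the sign of $4e^{u}-6x^{2}+\dots$, is positive for all $x>0$ provided one checks the lower-order terms. I will verify that $g>0$ on all of $(0,\infty)$ by writing
\[ h''= h\bigl((h'/h)^2+(h'/h)'\bigr) = h\bigl(4x^{-6}e^{2u}+e^{u}(4x^{-6}\cdot\ldots-6x^{-4})\bigr) \]
and checking positivity directly. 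This gives that $F$ is nonzero on some $(0,x_0]$.

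\emph{Case $d=0$.} Here $\lambda(x)=cx$ and
\[ \frac{g(cx)}{g(x)}=\exp\Bigl(-e^{c^{-2}x^{-2}}(1+o(1))+e^{x^{-2}}(1+o(1))\Bigr)\to0, \]
since $c^{-2}>1$. Hence $F(x)=g(x)\bigl(c f'(x)\,g(cx)/g(x)-f'(cx)\bigr)<0$ for small $x>0$. At $x=0$ itself we have $F(0)=0$, but this is a single point. This mirrors the derivation of~\eqref{e:secondderiv2*}.

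\textbf{Step 2 (finitely many zeros on $[x_0,1]$).} Since $F\neq0$ somewhere on $[x_0,1]$ by Step 1, and $F$ is analytic on a neighbourhood of $[x_0,1]$ (as $\lambda(x)>0$ there), $F$ is not identically zero on that connected domain. Its zeros therefore cannot accumulate in the compact interval $[x_0,1]$, so there are finitely many. Combining this with Step 1 gives the finiteness of the set~\eqref{e:particularzeroset}.

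The main obstacle is the asymptotic comparison in Step 1. In the case $d=0$ it must be done carefully with the double exponential, so that the polynomial prefactors $x^{-6}e^{2u}$ are dominated. When $d>1$, one needs the global positivity $h''>0$. Both are elementary once $h''$ is written in closed form.
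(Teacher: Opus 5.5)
You follow the paper's route: reduce to the zeros of $F$ via the conjugation formula, show $F\neq0$ on some $(0,x_0]$ separately for $d=0$ and $d>0$, then use analyticity on $(0,\infty)$ for $[x_0,1]$. Your $d=0$ case and Step~2 are fine.

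The gap is the step you plan for $d>1$. The claim that $h''>0$ on all of $(0,\infty)$ is false, so checking positivity directly cannot succeed. With $u=x^{-2}$,
\[ h''(x)=h(x)\,e^{u}x^{-6}\bigl(4e^{u}-4-6u^{-1}\bigr). \]
The bracket is strictly increasing in $u$ and tends to $-\infty$ as $u\to0^+$. Even your partial expression $4e^{u}-6x^{2}$ is negative for large $x$. Equivalently, $h$ is increasing and bounded by $e^{-1}$, so it cannot be convex on $(0,\infty)$. Hence there is a unique $d^*\approx1.03$ with $h''>0$ on $(0,d^*)$, $h''(d^*)=0$, and $h''<0$ on $(d^*,\infty)$. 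For $d\neq d^*$ this does no harm, because you only need $F(x)\to c\,g(d)\neq0$, not positivity. For $d=d^*$, however, $F(x)\to0$ as $x\to0^+$, and Step~1 says nothing about zeros accumulating at $0$. As written, your argument does not cover this value of $d$.

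The fix is short and treats all $d>0$ at once:
\begin{itemize}
\item The function $x\mapsto c\,g(cx+d)$ is real analytic near $x=0$, since $g$ is analytic on $(0,\infty)$.
\item It is not identically zero there, since $g$ is analytic on $(0,\infty)$ and positive near $0^+$. Hence $c\,g(cx+d)=ax^k+O(x^{k+1})$ for some $k\geq0$ and $a\neq0$.
\item Both $g(x)$ and $f'(x)-1=h'(x)$ vanish to infinite order at $0$, and $f'(cx+d)$ is bounded.
\item Therefore $F(x)=ax^k(1+o(1))$, which is nonzero on some $(0,x_0]$.
\end{itemize}
The paper's own proof just asserts $\lim_{x\to0}(f\circ\lambda_{c,d}\circ f^{-1})''(f(x))=c^2f''(d)>0$. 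That assertion fails for $d>d^*$ (the limit is negative but nonzero, so the argument survives) and at $d=d^*$ (the limit is $0$, so the same gap occurs there). In the intended application, $\lambda_{c,d}([0,1])\subset[0,1]$ forces $d<1<d^*$. There your positivity argument is valid, because the bracket at $u=1$ equals $4e-10>0$.
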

\begin{proof}
For $x \neq 0$, 
\begin{align}\label{e:derivsparticular}
\begin{split}
h'(x) &= 2x^{-3} \exp(x^{-2}) h(x) \qquad \mbox{ and } \qquad f'(x) = 1 + h'(x) ; \\* 
f''(x) = h''(x) &= \Big((2x^{-3} \exp(x^{-2}))^2 - 6 x^{-4} \exp(x^{-2}) - 4 x^{-6} \exp(x^{-2}) \Big) h(x),
\end{split}
\end{align}
while $h'(0) = h''(0) = f''(0) = 0$. 
Observe from~\eqref{e:derivsparticular} that 
\begin{equation}\label{e:particularboundedaway0infty}
1 = f'(0) = \min_{0 \leq x < \infty} f'(x) < \sup_{0 \leq x < \infty} f'(x) < \infty. 
\end{equation}
Also, differentiating once more, we see that when $x>0$ is very small, the dominant term in the expression for $f'''(x) = h'''(x)$ is $(2x^{-3} \exp(x^{-2}))^3 h(x)$. It follows that there exists $\varepsilon > 0$ such that $h'''(x) > 0$ for all $x \in (0,\varepsilon]$, so $h''$ is (strictly) increasing on $[0,\varepsilon]$. 

Furthermore, by~\eqref{e:derivsparticular}, there exist functions $P_1(x), P_2(x)$ with
\[ \max\{|P_1(x)|,|P_2(x)|\} < x^{-3} \] 
for all $x>0$ sufficiently small, such that 
\begin{align}\label{e:secondderivspecific}
\begin{split}
\frac{\log h''(x)}{\log h''(cx)} &= \frac{P_1(x) + \log h(x)}{P_2(x) + \log h(cx)} \\
&= \frac{\frac{P_1(x)}{\log h(cx)} + \frac{\log h(x)}{\log h(cx)}}{\frac{P_2(x)}{\log h(cx)} + 1} \\
&= \frac{\frac{P_1(x)}{\log h(cx)} + \exp(x^2(1-c^{-2}))}{\frac{P_2(x)}{\log h(cx)} + 1} \\
&\xrightarrow[x \to 0^+]{} \frac{0+0}{0+1} = 0. 
\end{split}
\end{align}
Thus
\begin{equation}\label{e:particularratiosecondderivsblowsup}
\lim_{x \to 0^+} \frac{h''(cx)}{h''(x)} = 0.\footnote{We only require~\eqref{e:particularratiosecondderivsblowsup} for the proof of Lemma~\ref{lem:particularzeroderiv}, but the stronger statement~\eqref{e:secondderivspecific} will be used in the proof of Theorem~\ref{t:particularfunctionconf} below.}
\end{equation}

Now, by the same calculation as in~\eqref{e:secondderivconj}, for all $x \in \R$ we have 
\begin{equation}\label{e:particularconjsecondderiv}
(f \circ \lambda_{c,d} \circ f^{-1})''(f(x)) = \frac{c}{(f'(x))^2}\left( c f''(cx+d) - \frac{f'(cx+d)}{f'(x)} \cdot f''(x) \right). 
\end{equation}
We proceed as in the derivation of~\eqref{e:secondderiv2*}, considering the two cases $d=0$ and $d>0$ separately. First consider $d=0$. 
Combining~\eqref{e:particularconjsecondderiv} with~\eqref{e:particularboundedaway0infty},~\eqref{e:particularratiosecondderivsblowsup} and noting that $f''(x) = h''(x) > 0$ for $x>0$, we see that there exists $y_0 \in (0,f(1))$ (depending on $c$) such that $(f \circ \lambda_{c,0} \circ f^{-1})''(y) < 0$ for all $y \in (0,y_0]$ (while $(f \circ \lambda_{c,0} \circ f^{-1})''(0) = 0$). 
If $d > 0$, on the other hand, then by~\eqref{e:particularboundedaway0infty} and the fact that $f''(0) = 0$ we have 
\[ \lim_{x \to 0} (f \circ \lambda_{c,d} \circ f^{-1})''(f(x)) = c^2 f''(d) > 0. \] 
Therefore, if $d > 0$ then there exists $y_0' \in (0,f(1))$ (depending on $c$ and $d$) such that $(f \circ \lambda_{c,d} \circ f^{-1})''(y) > 0$ for all $y \in [0,y_0']$. 

We have shown that (regardless of whether or not $d=0$) there exists $y_0'' \in (0,f(1))$ depending on $c,d$ such that 
\begin{equation}\label{e:particularnotzeronearzero}
(f \circ \lambda_{c,d} \circ f^{-1})''(y) \neq 0 \qquad \forall \, y \in (0,y_0''].
\end{equation}
Moreover, $(f \circ \lambda_{c,d} \circ f^{-1})''$ is analytic on $(0,\infty)$, and by~\eqref{e:particularnotzeronearzero} it is not identically zero. 
Therefore it has at most finitely many zeros in the compact interval $[y_0,f(1)]$. This together with~\eqref{e:particularnotzeronearzero} implies the finiteness of the set from~\eqref{e:particularzeroset}. 
\end{proof}

We sketch the proof of Theorem~\ref{t:particularfunctionconf}, but leave out the parts of the proof that are very similar to the proof of Proposition~\ref{p:slowpushforward}. 
\begin{proof}[Proof sketch of Theorem~\ref{t:particularfunctionconf}]
Let $h(x) = \exp(-\exp(x^{-2}))$ and $f(x) = x + h(x)$ for $x \neq 0$, and $f(0) = h(0) = 0$, and let $\mu$ be the Cantor--Lebesgue measure supported inside $[0,1]$. 
As in the proof of Theorem~\ref{t:slowconformal}, $f \mu$ is a self-conformal measure for the $C^{\infty}$ IFS $\{f \circ T_1 \circ f^{-1}, f \circ T_2 \circ f^{-1}\}$, where $T_{1}(x)=x/3$ and $T_{2}(x)=(x+2)/3$. 
By Lemma~\ref{lem:particularzeroderiv}, all iterates of the IFS have second derivative vanishing at at most finitely many points in $[0,f(1)]$. 

Now, recall the observation from Lemma~\ref{lem:particularzeroderiv} that there exists $\varepsilon>0$ such that $h''$ (and therefore also $f''$) is increasing on $[0,\varepsilon]$. Without loss of generality we can assume $\varepsilon=3^{-M}$ for some large $M\in \N$. Then for all $\xi\in \R$ we have
\begin{equation}\label{e:particularsplitintegralfirst}
\int_{\R} e^{2\pi i \xi f(x)} d\mu(x)=\int_{0}^{3^{-M}} e^{2\pi i \xi f(x)} d\mu(x)+\int_{3^{-M}}^{\infty} e^{2\pi i \xi f(x)} d\mu(x).
\end{equation}
For the latter expression we have 
\[ \int_{3^{-M}}^{\infty} e^{2\pi i \xi f(x)} d\mu(x) = \sum_{\a\in \{1,2\}^{M}:\a\neq 0^{M}}p_{\a}\int_{\R} e^{2\pi i \xi f(T_{\a}(x))} d\mu(x).\]
Recall that it was proved in~\cite{ACWW25,BB25} that analytic non-affine images of self-similar measures have polynomial Fourier decay. Hence each integral appearing in the summation above decays to zero polynomially fast in $\xi$. Thus it remains to show that the remaining integral over $[0,3^{-M}]$ has the desired slow decay. 

Given an integer $n > M$, let $k_n$ be the largest integer such that $3^{k_n} h(3^{-n}) \leq 0.01$. Since $\supp(\mu) \cap (3^{-n},2\cdot (3^{-n})) = \varnothing$, we can further divide the integral: 
\begin{equation}\label{e:particularsplitintegralsecond}
\int_{0}^{3^{-M}} e^{2\pi i 3^{k_{n}} f(x)} d\mu(x) = \int_{0}^{3^{-n}} e^{2\pi i 3^{k_{n}} f(x)} d\mu(x)  + \int_{2\cdot (3^{-n})}^{3^{-M}} e^{2\pi i 3^{k_{n}} f(x)} d\mu(x). 
\end{equation}
For all $n>M$ sufficiently large, we can deal with the $2\cdot (3^{-n}) \leq x \leq 3^{-M}$ region by obtaining an upper bound for 
\[ \sum_{\a \in \{1,2\}^{n-M}: \a \neq 0^{n-M}} p_{0^M \a} \Big| \int_{\R} e^{2\pi i 3^{k_n} f(T_{0^M \a}(x))} d\mu(x) \Big| \] 
using the same proof strategy as in Proposition~\ref{p:slowpushforward}. 
Indeed, to do this we use the $c=1/2$ case of~\eqref{e:secondderivspecific} and the fact that $h''$ is increasing on $[0,3^{-M}]$ to obtain a lower bound on $\max_{2\cdot (3^{-n}) \leq x \leq 3^{-M}} h''(x)$, and apply Theorem~\ref{t:bakerbanaji}. An analogue of~\eqref{e:conformalbitnearzero} for the integral over $[0,3^{-n}]$ can be established using the definition of $k_{n}$, properties of the Cantor--Lebesgue measure, and the fact that $\widehat{\mu}(3^{n})=\widehat{\mu}(1)\neq 0$ for all $n\in \N$. These bounds for the two parts of the integral in~\eqref{e:particularsplitintegralsecond} together yield that for all large enough $n>M$, 
\[ \int_{0}^{3^{-M}} e^{2\pi i 3^{k_n} f(x)} d\mu(x) \gtrsim \mu([0,3^{-n}]) = 2^{-n}. \]
But for all large enough $n$, $3^{k_n} \approx \exp(\exp(9^{n}))$, so $\log(3^{k_n}) \approx k_n \approx \exp(9^{n})$, hence $\log \log (3^{k_n}) \approx 9^{n}$. 
Therefore for all large enough $n>M$, 
\[ \int_{0}^{3^{-M}} e^{2\pi i 3^{k_n} f(x)} d\mu(x) \gtrsim 2^{-n} > 9^{-n} \approx \frac{1}{\log \log (3^{k_n})}. \]
Using~\eqref{e:particularsplitintegralfirst} and the aforementioned polynomial decay of $\int_{3^{-M}}^{\infty} e^{2\pi i \xi f(x)}$ in $\xi$, this implies that for all large enough $n$, 
\[ |\widehat{f \mu}(3^{k_n})| \gtrsim \frac{1}{\log \log (3^{k_n})}. \] 
Nonetheless, $f \mu$ is Rajchman by Theorem~\ref{t:pushisrajchman}~\eqref{i:pushrajchman}. 
\end{proof}

The argument used in the proof of Theorem~\ref{t:particularfunctionzero} is essentially contained in the proofs of Theorems~\ref{t:particularfunctionconf} and~\ref{t:slowzerodim}. As such we will omit this argument. 

The discussion below justifies the claims made in Example~\ref{Example:self-similar image}. Recall that this example motivates the condition appearing in Question~\ref{Q:refinedquestion} that is formulated in terms of the second derivatives of all composition of contractions coming from our IFS.

\noindent \textbf{Example~\ref{Example:self-similar image} continued}. Recall that $\nu$ is the Cantor--Lebesgue measure on $[0,1]$ (so $T_{1}(x) = x/3$ and $T_{2}(x) = (x+2)/3$ with the $(1/2,1/2)$ probability vector). In Example~\ref{Example:self-similar image} we choose $f \colon [0,1]\to [0,1]$ an increasing $C^{\infty}$ diffeomorphism which is affine on $[2/9,1/3]$ and satisfies
\begin{equation}
    f''(x) \neq 0 \qquad \mbox{ for all } x \in [0,2/9) \cup (3/9,1]. 
    \end{equation}  We claimed that it is possible to choose $f$ so that the following properties hold:
    \begin{enumerate}
        \item\label{i:exampleconformalproperty} $\mu$ is the self-conformal measure for the $C^{\infty}$ IFS 
        \[ \{S_{a} \coloneqq f\circ T_{a}\circ f^{-1} \colon [0,1]\to [0,1]\}_{a=1}^{2} \] 
        and the probability vector $(1/2,1/2)$. 
        \item\label{i:examplederivproperty} For $a\in \{1,2\}$ we have $S_{a}''(x)\neq 0$ apart from at most finitely many $x\in [0,1]$.
    \end{enumerate} It remains to justify this claim. We start by choosing $f$ in such a way that it is real analytic on $[0,2/9)\cup (1/3,1]$. If we also choose $f$ in such a way that $\max_{x \in [0,1]}|f'(x)| \in (0.9,1.1)$, then by the proof of Lemma~\ref{lem:pushforwardconformal}, $\mu$ is the self-conformal measure for the IFS $\{f \circ S_1 \circ f^{-1}, f \circ S_2 \circ f^{-1} \}$ with probability vector $(1/2,1/2)$, so property~\eqref{i:exampleconformalproperty} holds. 
    
    To justify property~\eqref{i:examplederivproperty}, it is useful to recall the formula for $S_{a}''$ provided by~\eqref{e:secondderivconj}: for $x\geq 0$ and $a\in \{1,2\}$ we have
    \begin{equation}
        \label{e:secondderivformulaagain}
        S_{a}''(f(x)) = \frac{1}{3(f'(x))^{2}}\left(\frac{f''(T_{a}(x))}{3}-\frac{f'(T_{a}(x))}{f'(x)}\cdot f''(x)\right).
    \end{equation}
    If $x\in [2/9,1/3]$ then $f''(x)=0$, but for each $a \in \{1,2\}$, $f''(T_{a}(x))\neq 0$ because $T_{a}(x)\notin [2/9,1/3]$ if $x\in [2/9,1/3]$. 
    Thus $S_{a}''(f(x))\neq 0$ for all $x\in [2/9,1/3]$ and $a\in \{1,2\}$ by~\eqref{e:secondderivformulaagain}. Similarly, if $x\in [2/3,1]$ then $f''(T_{1}(x))=0$ and $f''(x)\neq 0$, so $S_{1}''(f(x))\neq 0$ for all $x\in [2/3,1]$. 
    By continuity there exists $\varepsilon>0$ such that $S_{a}''(f(x))\neq 0$ for all $x\in (2/9-\varepsilon,1/3+\varepsilon)$ and $a\in \{1,2\}$, and $S_{1}''(f(x))\neq 0$ for all $x\in (2/3-\epsilon,1]$. It remains to consider the following cases:
    \begin{enumerate}
        \item $x\in [0,2/9-\varepsilon] $ and $a=2$.
        \item $x\in [1/3+\varepsilon,1]$ and $a=2$.
        \item $x\in [0,2/9-\varepsilon]$ and $a=1$.
        \item $x\in [1/3+\varepsilon,2/3-\varepsilon]$ and $a=1$.
    \end{enumerate} We need to show that we can choose $f$ in such a way that each of the cases above contributes at most finitely many values of $x$ such that $S_{a}''(f(x))=0$. We will focus on the case where $x\in [0,2/9-\varepsilon] $ and $a=2$; the other cases can be addressed similarly. The key observation is that the function 
    \begin{equation}
    \label{e:analytic function}
    x \mapsto \left(\frac{f''(T_{2}(x))}{3}-\frac{f'(T_{2}(x))}{f'(x)}\cdot f''(x)\right)
    \end{equation} 
    is real analytic on $[0,2/9-\varepsilon]$. Thus, by~\eqref{e:secondderivformulaagain}, to prove that there are at most finitely many $x$ in the compact interval $[0,2/9-\varepsilon]$ satisfying $S_{2}''(f(x))=0$, it is sufficient to show that we can choose $f$ so that the function described by~\eqref{e:analytic function} is not the constant zero function. 
    This can be done by adapting the arguments used in the proof of Theorem~\ref{t:slowconformal}. 
    In particular, we can introduce a random perturbation that is infinitely flat at the endpoints $2/9$ and $1/3$, and show that for almost every choice of perturbation, the function described by~\eqref{e:analytic function} is not the constant zero function on $[0,2/9-\varepsilon]$. Applying this argument to each of the cases listed above, and using the fact that a finite union of sets of measure zero is also of measure zero, eventually yields an $f$ satisfying property~\eqref{i:examplederivproperty}. 

\section*{Acknowledgements}
We thank Tuomas Orponen for a conversation which helped with the construction of the $C^{\infty}$ function $g$ in Section~\ref{ss:provezerodim}, and Boris Solomyak for helpful comments on a draft version of this paper. 
Both authors were financially supported by SB's EPSRC New Investigators Award (EP/W003880/1) at Loughborough University. AB was also supported by the Marie Skłodowska-Curie Actions postdoctoral fellowship FoDeNoF (no. 101210409) from the European Union, and by Tuomas Orponen's Research Council of Finland grant (no. 355453), at the University of Jyväskylä. 

\bibliographystyle{plain}

\end{document}